\documentclass[11pt, oneside]{amsart}   	% use "amsart" instead of "article" for AMSLaTeX format
\usepackage{geometry}                		% See geometry.pdf to learn the layout options. There are lots.
\usepackage{amsmath,amsthm,amscd,amsfonts, amssymb,mathrsfs}

\geometry{letterpaper}                   		% ... or a4paper or a5paper or ... 
\usepackage{graphicx}				% Use pdf, png, jpg, or eps§ with pdflatex; use eps in DVI mode
								% TeX will automatically convert eps --> pdf in pdflatex		

%========
%

%===========================================================
\font\mbn=msbm10 scaled \magstep1
\font\mbs=msbm7 scaled \magstep1
\font\mbss=msbm5 scaled \magstep1
\newfam\mbff 
\textfont\mbff=\mbn
\scriptfont\mbff=\mbs
\scriptscriptfont\mbff=\mbss
%\def\RR{{\mbf R}}
%\def\Re{{\mbf R}}
%\def\Q{{\mbf Q}}
%\def\Z{{\mbf Z}}
%\def\Co{{\mbf C}}

%\def\Di{{\mbf D}}
%\def\N{{\mbf N}}

%========================================================================
\newtheorem{Th}{Theorem}[section]
\newtheorem{Lm}[Th]{Lemma}
\newtheorem{C}[Th]{Corollary}

\newtheorem{Proposition}[Th]{Proposition}
\newtheorem{R}[Th]{Remark}

\newtheorem{E}[Th]{Example}
\newtheorem*{Teo}{Theorem  A}

%===

\title[Bernstein Type Inequalities for Restrictions of Polynomials]{Bernstein Type Inequalities for Restrictions of Polynomials to Complex Submanifolds of ${\mathbf {\mathbb C^N}}$ }
\author{Alexander Brudnyi}
\address{Department of Mathematics and Statistics\newline
\hspace*{1em} University of Calgary\newline
\hspace*{1em} Calgary, Alberta\newline
\hspace*{1em} T2N 1N4 Canada}
\email{abrudnyi@ucalgary.ca}

\keywords{Bernstein type inequality, exponent, entire function, plurisubharmonic function, Taylor series}
\subjclass[2010]{Primary 26D05; Secondary 32A17; 30D15; 46E15}
\thanks{Research supported in part by NSERC}

\date{}							% Activate to display a given date or no date

\begin{document}

\begin{abstract}
The paper studies Bernstein type inequalities for restrictions of holomorphic polynomials to graphs $\Gamma_f\subset\mathbb C^{n+m}$ of holomorphic maps $f:\mathbb C^n\rightarrow\mathbb C^m$.  We establish general properties of exponents in such inequalities and describe some classes of graphs admitting Bernstein type inequalities of optimal exponents and of exponents of polynomial growth.  
\end{abstract}

\maketitle

\section{Formulation of Main Results}
\subsection{} In recent years there was a considerable interest in Bernstein, Markov and Remez type inequalities for restrictions of holomorphic polynomials to certain submanifolds of $\mathbb C^N$  in connection with various problems of analysis and geometry, see, e.g., \cite{B, BBL, BBLT, BLMT, BP, CP1, CP2, CP3, FN1, FN2, NSV, P, RY, S} and references therein. 
Specifically, the graph $\Gamma_f\subset\mathbb C^{n+m}$ of a holomorphic map $f:\mathbb C^n\rightarrow\mathbb C^m$ is said to admit the {\em Bernstein type inequality of exponent} $\mu:\mathbb Z_+\rightarrow\mathbb R_+$ if for each $r>0$ there exists a nonnegative constant $C(r)$ such that for all holomorphic polynomials $p$ on $\mathbb C^{n+m}$
\begin{equation}\label{e1}
\max_{\|z\|\le er} |p(z,f(z))|\le C(r)^{\mu({\rm deg}\,p)} \,\max_{\|z\|\le r} |p(z,f(z))|.
\end{equation}
(Here $\|\cdot\|$ is the Euclidean norm on $\mathbb C^{n}$.)\smallskip

The value $\mu({\rm deg}\, p)\ln C(r)$ can be regarded as the degree of the function $p_f:=p(\cdot, f(\cdot))$. In particular, inequality \eqref{e1} implies the corresponding Markov and Remez type inequalities for functions $p_f$ in the Euclidean balls $\{z\in\mathbb C^n\, :\, \|z\|\le r\}$ with degrees of polynomials in the standard setting (see \cite{Ma, Be, R, BG}) replaced by $c\mu({\rm deg}\,p)\ln C(r)$ for an absolute constant $c>0$, see, e.g., \cite[Sect.~2]{B}, \cite{BLMT} for details.
In addition, if $n=1$, inequality \eqref{e1} implies the Jensen type inequality asserting that the number of zeros (counted with their multiplicities)
of  the function $p_f$ in the disc $\{z\in\mathbb C\, :\, |z|\le r\}$ is bounded from above by $\frac 52\mu({\rm deg}\,p)\ln C(r)$, see, e.g., \cite{VP}.

It is known that $\Gamma_f$ admits the Bernstein type inequality of exponent  $\mu_{{\rm id}}(k):=k$, $k\in\mathbb Z_+$, if and only if it is a complex algebraic manifold, see \cite{S}. On the other hand, it is easy to give examples of graphs $\Gamma_f$ for which the exponent $\mu$ in \eqref{e1} must be of an arbitrarily prescribed growth, see, e.g., \cite[p.\,140]{BBL}.  

In this paper we begin the systematic study of general properties of exponents in Bernstein type inequalities and of some classes of graphs $\Gamma_f$ admitting such inequalities of exponents of polynomial growth. Some of our proofs rely heavily upon the results of \cite{B}.
\subsection{}
In this part we describe some properties of exponents in Bernstein type inequalities.\smallskip

Recall that a subset $K\subset\mathbb C^n$ is called {\em pluripolar} if there exists a nonidentical $-\infty$ plurisubharmonic function $u$ on $\mathbb C^n$ such that 
$u|_{K}=-\infty$. (For basic results of the theory of plurisubharmonic functions see, e.g., \cite{K}.)
\begin{Th}\label{theo1}
\begin{itemize}
\item[(a)]
For each holomorphic map $f:\mathbb C^n\rightarrow\mathbb C^m$ its
graph $\Gamma_f\subset\mathbb C^{n+m}$ admits the Bernstein type inequality of certain exponent.\smallskip
\item[(b)]
$\Gamma_f\subset\mathbb C^{n+m}$ admits the Bernstein type inequality of exponent $\mu$ if and only if for each compact nonpluripolar subset $K\subset\mathbb C^n$ there exists a constant $C(K;r)$, $r>0$, such that for all $p\in\mathcal P(\mathbb C^{n+m})$, the space of holomorphic polynomials on $\mathbb C^{n+m}$,
\begin{equation}\label{e2}
\max_{\|z\|\le r} |p(z,f(z))|\le C(K;r)^{\mu({\rm deg}\,p)} \max_{z\in K} |p(z,f(z))|.\smallskip
\end{equation}
\item[(c)]
If $\Gamma_f\subset\mathbb C^{n+m}$ is nonalgebraic and admits the Bernstein type inequality of exponent $\mu$, then
\[
\varliminf_{k\rightarrow\infty}\frac{\mu(k)}{k^{1+\frac 1n}}\ne 0.\smallskip
\]
\item[(d)]
If $\Gamma_f\subset\mathbb C^{n+m}$ admits the Bernstein type inequalities of exponents $\mu_1$ and $\mu_2$, then it admits such inequalities of all exponents $\mu\ge \min (\mu_1,\mu_2)$.\smallskip
\item[(e)]
If $\Gamma_f\subset\mathbb C^{n+m}$ admits the Bernstein type inequality of exponent $\mu$, then each $\Gamma_{f_w}\subset\mathbb C^{n+m}$, $f_w(z):=f(z+w)$, $z\in\mathbb C^n$, $w\in\mathbb C^n$, admits it as well.\smallskip
\item[(f)]
If graphs $\Gamma_{f_i}\subset\mathbb C^{n_i+m_i}$ of holomorphic maps $f:\mathbb C^{n_i}\rightarrow\mathbb C^{m_i}$
admit Bernstein type inequalities of exponents $\mu_i$, $i=1,2$, then the graph $\Gamma_{f_1\times f_2}\subset \mathbb C^{n_1+n_2+m_1+m_2}$ of the map $(f_1\times f_2)(z_1,z_2):=(f_1(z_1),f_2(z_2))\in\mathbb C^{m_1+m_2}$, $z_i\in\mathbb C^{n_i}$, $i=1,2$, admits the Bernstein type inequality of exponent $\max (\mu_1,\mu_2)$.

\noindent In turn, if $\Gamma_{f_1\times f_2}$ admits the Bernstein type inequality of exponent $\mu$, then each $\Gamma_{f_i}$ admits it as well.
\end{itemize}
\end{Th}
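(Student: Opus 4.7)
The plan is to establish (a)--(f) in an order that isolates the pluripotential core from its combinatorial consequences. Parts (a) and (b) are the analytic heart; (c) is an independent rigidity estimate; and (d)--(f) follow by direct manipulations once (a) and (b) are available.

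For (a), I would fix $r>0$ and $k\in\Z_+$ and view both sides of \eqref{e1} as seminorms on the finite-dimensional space $\mathcal P_k(\Co^{n+m})$ of polynomials of degree at most $k$. Because $\Gamma_f$ is an irreducible complex submanifold, the identity theorem forces the two seminorms to have a common kernel, so their ratio on the quotient is some finite $N(r,k)$. It remains to convert the $r$-dependence of $N(r,k)$ into a single exponent $\mu(k)$, absorbing the residual $r$-dependence into $C(r)$; I would do this by attaching to $\Gamma_f$ its Siciak extremal function and invoking a Bernstein--Walsh type majorization, in the spirit of the pluripotential tools of \cite{B}. In part (b), the implication \eqref{e2}$\Rightarrow$\eqref{e1} is immediate on taking $K:=\bar B_r(0)$ and LHS radius $er$. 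The converse is the substantive direction: given a nonpluripolar compact $K$, the Bernstein--Walsh inequality $|p_f(z)|\le \|p_f\|_K\exp(d\cdot V_K^*(z))$ (with $V_K^*$ locally bounded because $K$ is nonpluripolar) combined with a ``Bernstein degree'' $d\sim \mu(\deg p)\log C(r)$ extracted from \eqref{e1} converts a supremum over a ball into one over $K$, yielding \eqref{e2}.

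Part (c) is the main obstacle. Granting $\Gamma_f$ nonalgebraic, Sadullaev's theorem implies that the Siciak function $V_{\Gamma_f}$ of the graph grows strictly faster than $\log(1+\|z\|)$, while \eqref{e1} forces an upper bound on $V_{\Gamma_f}$ controlled by $\mu$. The lower bound $\mu(k)\gtrsim k^{1+1/n}$ would be extracted by a quantitative comparison between the dimension $\sim k^{n+m}$ of $\mathcal P_k(\Co^{n+m})$ and the number of independent Taylor coefficients of $p_f$ on $\Co^n$ controllable by a Bernstein-type norm; the exponent $1+1/n$ reflects the $n$-dimensionality of the base and should be produced by plurisubharmonic volume estimates of the kind developed in \cite{B}. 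Making this dimensional comparison precise in the nonalgebraic setting is the technical crux.

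Parts (d), (e), and (f) are structural. For (d), for each $p$ pick $i\in\{1,2\}$ realizing $\min(\mu_1,\mu_2)(\deg p)$ and set $C(r):=\max(1,C_1(r),C_2(r))$; then $C(r)^{\min(\mu_1,\mu_2)(k)}\le C(r)^{\mu(k)}$ whenever $\mu\ge\min(\mu_1,\mu_2)$. For (e), the translation $T_w(z,y):=(z+w,y)$ sends $\Gamma_{f_w}$ onto $\Gamma_f$ preserving polynomial degrees, and the identity $(p\circ T_w)(z,f_w(z))=p_f(z+w)$ reduces \eqref{e1} for $\Gamma_{f_w}$ to an inequality for $p_f$ over translated balls $\bar B_r(w)\subset\bar B_{er}(w)$; applying (b) to $\Gamma_f$ with $K:=\bar B_r(w)$ and ambient radius $er+\|w\|$ gives the conclusion with the same exponent $\mu$ but a new constant depending on $\|w\|$. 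For the forward half of (f), freeze $z_2$ and apply the Bernstein inequality of $\Gamma_{f_1}$ in $z_1$, then freeze $z_1$ and apply that of $\Gamma_{f_2}$ in $z_2$; since $\max(\mu_1,\mu_2)$ dominates each $\mu_i$, both steps fit under a single exponent, and the sandwich $\bar B_{r/\sqrt 2}\times \bar B_{r/\sqrt 2}\subset \bar B_r(0)\subset \bar B_r\times \bar B_r$ (with one extra iteration of the same two-step procedure) handles the passage between Euclidean and polydisc balls. For the reverse half, lift $p\in\mathcal P(\Co^{n_1+m_1})$ to $P(z_1,z_2,y_1,y_2):=p(z_1,y_1)$; then $P_{f_1\times f_2}(z_1,z_2)=p_{f_1}(z_1)$ depends only on $z_1$, so maxima over Euclidean balls in $\Co^{n_1+n_2}$ collapse to maxima over Euclidean balls in $\Co^{n_1}$, and \eqref{e1} for $\Gamma_{f_1\times f_2}$ becomes \eqref{e1} for $\Gamma_{f_1}$ with the same exponent $\mu$.
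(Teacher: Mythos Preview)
Parts (d)--(f) of your proposal are correct and essentially coincide with the paper's arguments. Your sketches for (a) and (b) are in the right spirit but each hides a nontrivial point. In (a), once you have a finite $N(r,k)$ you still need to factor it as $C(r)^{\mu(k)}$ with $\mu$ independent of $r$ and $C$ independent of $k$; the paper achieves this separation by combining Sadullaev's estimate on the Zariski closure $X_f\supset\Gamma_f$ (which yields a factor $A(r)^{\deg p}$ controlling the growth in $r$) with a finite-dimensional Montel-type argument comparing $\sup_U|p|$ to $\sup_{U\cap\Gamma_f}|p|$ for a fixed $U\Subset X_f$ (which yields a factor $c(k)$). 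In (b), the function $p_f$ is not a polynomial, so the classical Bernstein--Walsh inequality does not apply literally; what you need is the analogue for holomorphic functions of bounded Bernstein index, and the paper supplies this directly by a contradiction argument building a plurisubharmonic limit from a bad sequence and showing it is $-\infty$ on the nonpluripolar set $K$.

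The genuine gap is in (c). Neither growth of the Siciak function of $\Gamma_f$ nor ``plurisubharmonic volume estimates'' is the mechanism that produces the exponent $1+\tfrac1n$, and the dimension $k^{n+m}$ you mention is not the relevant one. The paper's argument is elementary linear algebra plus zero counting: after reducing to a single nonpolynomial coordinate $f_1$, one works in $\mathcal P_k(\mathbb C^{n+1})$; choosing $s_k\sim c\,k^{1+1/n}$ so that $\dim\mathcal P_{s_k}(\mathbb C^n)<\dim\mathcal P_k(\mathbb C^{n+1})$ (balancing $s_k^{\,n}$ against $k^{\,n+1}$ is precisely what forces the exponent $1+\tfrac1n$), the truncation-of-Taylor-series map $\mathcal P_k(\mathbb C^{n+1})\to\mathcal P_{s_k}(\mathbb C^n)$ has nontrivial kernel, giving a nonzero $P_k$ with $P_{kf}$ vanishing to order $>s_k$ at $0$. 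Restricting $P_{kf}$ to a suitable complex line through $0$ and applying the Jensen-type zero bound yields
\[
s_k+1\le \tfrac{5}{2}\bigl(m_{P_{kf}}(er)-m_{P_{kf}}(r)\bigr)\le \tfrac{5}{2}\,\mu(k)\ln C(r),
\]
hence $\mu(k)\ge c'k^{1+1/n}$. The bridge from vanishing order to $\mu$ is this univariate zero-counting step, not a pluripotential estimate.
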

We say that functions $\mu_1,\mu_2:\mathbb Z_+\rightarrow\mathbb R_+$ are {\em equivalent} if there exists a positive real number $c$ such that
\[
\frac 1c \,\mu_1(k)\le \mu_2(k)\le c\,\mu_1(k)\quad {\rm for\ all}\quad k\in\mathbb Z_+.
\]
Let $\mathscr R$ be the set of equivalence classes of functions $\mathbb Z_+\rightarrow\mathbb R_+$. By $\langle\mu\rangle\in \mathscr R$ we denote the equivalence class of $\mu:\mathbb Z_+\rightarrow\mathbb R_+$. We introduce a partial order on $\mathscr R$ writing $\langle\mu_1\rangle\le\langle\mu_2\rangle$ if there exists $c>0$ such that $\mu_1\le c\,\mu_2$. In addition, we regard $\mathscr R$ as an abelian semigroup with addition $\langle\mu_1\rangle+\langle\mu_2\rangle:=\bigl\langle\max(\mu_1,\mu_2)\bigr\rangle$ induced by the pointwise addition of functions.
 Clearly, if $\Gamma_f$ admits the Bernstein type inequality of exponent $\mu$, then it admits such inequality of any equivalent exponent. Therefore it is naturally to consider the set $\mathscr E_f\subset\mathscr R$ of equivalence classes of possible exponents in Bernstein type inequalities for $\Gamma_f$. Then properties (c)--(f) of the theorem can be rephrased as follows:
\begin{itemize}
\item[(c$'$)] 
If $\Gamma_f\subset\mathbb C^{n+m}$ is nonalgebraic, then $\bigl\langle\mu_{{\rm id}}^{1+\frac 1n}\bigr\rangle\in \mathscr R$ is a lower bound of the set $\mathscr E_f$.
\item[(d$'$)] 
$\mathscr E_f$ is a partially ordered subsemigroup of $(\mathscr R,\le, +)$ and every two elements of $\mathscr E_f$ have unique infimum  and  supremum (i.e. $\mathscr E_f$ is a {\em lattice}).
\item[(e$'$)]
$\mathscr E_f$ coincides with $\mathscr E_{f_w}$ for all $w\in\mathbb C^n$.
\item[(f$\,'$)] 
$\mathscr E_{f_1\times f_2}=\mathscr E_{f_1}+\mathscr E_{f_2}$.
 \end{itemize}
 
We say that a function $\mu_o:\mathbb Z_+\rightarrow\mathbb R_+$ is {\em optima}l for $\Gamma_f$ if $\langle\mu_o\rangle$ is the minimal element of $\mathscr E_f$ (in other words,
$\Gamma_f$ admits the Bernstein type inequality of exponent $\mu_o$ and does not admit such inequality of an exponent $\mu$ such that $\mu\le c\mu_0$ for some $c>0$ and $ \varlimsup_{k\rightarrow\infty}\frac{\mu_o(k)}{\mu(k)}=\infty$.) Since $\mathscr E_f$ is a lattice, the minimal element $\langle\mu_0\rangle$ of $\mathscr E_f$ is   also the least element of $\mathscr E_f$, i.e. $\langle\mu_0\rangle\le\langle\mu\rangle$ for all $\langle\mu\rangle\in\mathscr E_f$. Moreover, in this case $\mathscr E_f=\langle\mu_0\rangle+\mathscr R$.

For instance, $\mu_{{\rm id}}$ is optimal for an algebraic $\Gamma_f$. Below we give some other examples of $\Gamma_f$ allowing optimal exponents. The problem of existence of optimal exponents for generic $\Gamma_f$ is open. \smallskip

Let $K\subset\mathbb C^n$ be a nonpluripolar compact set. For  a function $\mu:\mathbb Z_+\rightarrow\mathbb R_+$
we set
\[
u_{K,\mu}^k(z;f):=\sup\left\{\frac{\ln |p_f(z)|}{\max\bigl(1,\mu(k)\bigr)}\, :\, p\in\mathcal P(\mathbb C^{n+m}),\ {\rm deg}\,p= k,\ \sup_{K}|p_f|=1\right\},\  z\in\mathbb C^n,
\]

\[
 u_K^k(r;f):=\max\bigl(1,\mu(k)\bigr)\max_{\|z\|\le r}u_{K,\mu}^k(z;f),\quad r>0,\  k\in\mathbb Z_+.
\]
Approximating polynomials of a given degree by those of a larger one (cf. \eqref{equ3.21} below), one obtains that for each $r>0$ the sequence $u_{K}^k(r;f)$, $k\in\mathbb Z_+$, is nondecreasing. Also, due to the maximum principle for plurisubharmonic functions classes $\bigl\langle u_K^{\cdot}(r;f)\bigr\rangle\in\mathscr R$, $r>0$, form a subsemigroup and a chain $\mathscr U_f^K$. By definition, each element of $\mathscr E_f$ is an upper bound of $\mathscr U_f^K$.

\begin{Th}\label{theo1.3}
\begin{itemize}
\item[(a)] Each $u_{K,\mu}^k$ is a nonnegative continuous plurisubharmonic function on $\mathbb C^n$ equals $0$ on $K$.
\item[(b)]
Graph $\Gamma_f$ admits the Bernstein type inequality of exponent $\mu$ if and only if the (Lebesgue measurable) function
\[
\begin{array}{r}
\displaystyle
u_{K,\mu}(z;f):=\varlimsup_{k\rightarrow\infty}u_{K,\mu}^k(z;f),\quad   z\in\mathbb C^n,
 \end{array}
\]
is locally bounded from above. 
\item[(c)]
An exponent $\mu$ in the Bernstein type inequality for $\Gamma_f$ is optimal if and only if for each subsequence $\bar k=\{k_j\}_{j\in\mathbb N}\subset\mathbb N$ the function
\[
\begin{array}{r}
\displaystyle
u_{K,\mu;\bar k}(z;f):=\varlimsup_{j\rightarrow\infty}u_{K,\mu}^{k_j}(z;f),\quad  z\in\mathbb C^n,
 \end{array}
\]
is not identically $0$. 
\item[(d)] 
An exponent $\mu$ in the Bernstein type inequality for $\Gamma_f$ is optimal if and only if $\langle\mu\rangle\in \mathscr U_f^K$.  In this case $\langle\mu\rangle$ is the maximal element of $\mathscr U_f^K$.
\end{itemize}
\end{Th}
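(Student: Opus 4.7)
The plan is to prove parts (a)--(d) in sequence, leveraging part (b) to recast the Bernstein inequality as the analytically more tractable statement that the plurisubharmonic upper envelope $u_{K,\mu}$ is locally bounded, and then handling (c)--(d) as statements about how this envelope degenerates along subsequences of degrees.

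For (a), I observe that $\ln|p_f|$ is plurisubharmonic for every holomorphic polynomial $p$, so $u_{K,\mu}^k$ is a supremum of plurisubharmonic functions over the family $\mathscr{F}_k:=\{p\in\mathcal{P}(\mathbb{C}^{n+m}):\deg p\le k,\ \sup_K|p_f|=1\}$. The key point is that modulo the ideal $\{p:p_f\equiv 0\}$ this family is compact in a finite-dimensional quotient---nonpluripolarity of $K$ is exactly what makes $\sup_K|p_f|$ a norm on that quotient---yielding a uniform bound on $|p_f|$ over each Euclidean ball and hence local boundedness of $u_{K,\mu}^k$. Continuity then follows from a standard maximizer argument: a fixed maximizer at $z_0$ gives lower semicontinuity as $z\to z_0$, and convergent subsequences of maximizers at $z_n\to z_0$ give upper semicontinuity. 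Nonnegativity comes from inserting $p\equiv 1$, and vanishing on $K$ is immediate from the normalization. Part (b) is then a direct translation via Theorem~\ref{theo1}(b): the inequality $\sup_{\|z\|\le r}|p_f|\le C(K;r)^{\mu(\deg p)}\sup_K|p_f|$, after normalization on $K$ and division by $\max(1,\mu(k))$, becomes exactly $u_{K,\mu}^k\le\ln C(K;r)$ on $\|z\|\le r$, and the converse reverses this chain. Measurability of $u_{K,\mu}$ follows from continuity of the $u_{K,\mu}^k$.

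Write $a_k(r):=\sup_{\|z\|\le r}u_{K,\mu}^k(z;f)$. For part (c) I argue both directions by contradiction. The ``if'' direction: if $\mu'\in\mathscr{E}_f$ with $\mu'\le c\mu$ and $\varlimsup\mu(k)/\mu'(k)=\infty$, pick $k_j$ with $\mu'(k_j)/\mu(k_j)\to 0$; the scaling identity $u_{K,\mu}^k=(\max(1,\mu'(k))/\max(1,\mu(k)))\,u_{K,\mu'}^k$ combined with the local boundedness of $u_{K,\mu'}^{k_j}$ from (b) forces $u_{K,\mu;\bar k}\equiv 0$, a contradiction. The ``only if'' direction: suppose $u_{K,\mu;\bar k}\equiv 0$ along some $\{k_j\}$; since the $u_{K,\mu}^{k_j}$ are locally uniformly bounded above by (b), Hartogs' lemma for plurisubharmonic functions upgrades this pointwise $\varlimsup=0$ to $a_{k_j}(r)\to 0$ for every $r>0$. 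Passing to a further subsequence (reusing notation) so that $a_{k_j}(j)<j^{-2}$ and setting $\mu'(k_j):=\mu(k_j)/j$ with $\mu'(k):=\mu(k)$ otherwise, I check that $u_{K,\mu'}^{k_j}=j\,u_{K,\mu}^{k_j}$ remains locally bounded (on $\|z\|\le r$, it is at most $j\,a_{k_j}(j)<1/j$ once $j\ge r$), so $\mu'\in\mathscr{E}_f$ by (b) while $\mu(k_j)/\mu'(k_j)=j\to\infty$ contradicts the optimality of $\mu$.

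For part (d), the ``if'' direction is immediate: if $\langle\mu\rangle=\langle u_K^{\cdot}(r_0;f)\rangle\in\mathscr{U}_f^K$, any $\mu'\in\mathscr{E}_f$ with $\mu'\le c\mu$ automatically satisfies $\mu'\ge c'\,u_K^{\cdot}(r_0;f)\sim c'\mu$ because every exponent dominates $\mathscr{U}_f^K$, hence $\mu'\sim\mu$; maximality of $\langle\mu\rangle$ in $\mathscr{U}_f^K$ then follows because $\mu$ itself, being an exponent, is an upper bound of $\mathscr{U}_f^K$. For the ``only if'' direction, assume $\mu$ optimal yet $\langle\mu\rangle\ne\langle u_K^{\cdot}(r;f)\rangle$ for every $r$; since $u_K^k(r;f)\le c(r)\mu(k)$ always, non-equivalence forces $\liminf_k u_K^k(r;f)/\mu(k)=0$ for each $r$, giving a subsequence $\{k_j^{(r)}\}$ with $a_{k_j^{(r)}}(r)\to 0$. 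A diagonal extraction across $r=1,2,\ldots$, combined with the monotonicity $a_k(r')\le a_k(r)$ for $r'\le r$, produces a single subsequence $\{k_j\}$ with $a_{k_j}(r)\to 0$ for every $r>0$, so $u_{K,\mu;\bar k}\equiv 0$ directly (no Hartogs needed here), contradicting (c). The main obstacle I anticipate is the improvement construction in the ``only if'' direction of (c): turning pointwise vanishing of the subsequential envelope into a locally uniform decay rate fast enough to absorb a diverging rescaling requires Hartogs' lemma on top of (b) together with a second subsequence extraction tuned to that decay rate. A minor technical nuisance elsewhere is the ``$\deg p=k$'' versus ``$\deg p\le k$'' ambiguity in the definition of $u_{K,\mu}^k$, which I dispose of by passing to the closure of $\mathscr{F}_k$ in the finite-dimensional quotient, noting this does not change the supremum.
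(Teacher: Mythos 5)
Your proof follows the paper's argument essentially step-for-step: the same reduction of (b) to Theorem~\ref{theo1}\,(b) with Hartogs' lemma supplying the pointwise-to-uniform upgrade, the same rescaling identity $u_{K,\mu}^k = \frac{\max(1,\mu'(k))}{\max(1,\mu(k))}\,u_{K,\mu'}^k$ combined with Hartogs and a subsequence-retiming of $\mu$ for both directions of (c), and the same reduction of (d) to (c) via a sequence $\{k_j\}$ engineered so that $\sup_{\mathbb B_j^n}u_{K,\mu}^{k_j}\to 0$. The only genuine tactical deviation is in (a): the paper establishes continuity of $u_{K,\mu}^k$ (in fact local Lipschitz continuity) by a direct $\ln^+$-Lipschitz estimate plus Cauchy estimates on derivatives, whereas you get it from compactness of the normalized family in the finite-dimensional quotient $\mathcal P_k(\mathbb C^{n+m})/\{p : p_f\equiv 0\}$ (where nonpluripolarity of $K$ makes $\sup_K|p_f|$ a norm) together with a maximizer argument for upper semicontinuity --- a valid alternative that yields the same qualitative conclusion, though without the paper's quantitative Lipschitz bound, and which also absorbs the ``$\deg p=k$ versus $\le k$'' issue by passing to the closure rather than by the paper's explicit approximating sequence $p_i=(i+z_1^k)/\sup_K(i+z_1^k)$.
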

\begin{R}\label{rem1.3}
{\rm (1) Theorem \ref{theo1}\,(f) implies that if functions $\mu_i:\mathbb Z_+\rightarrow\mathbb R_+$ are optimal for $\Gamma_{f_i}$, $i=1,2$, then the function
$\max(\mu_1,\mu_2)$ is optimal for $\Gamma_{f_1\times f_2}$.\smallskip

\noindent (2) For a nonpolynomial entire function $f:\mathbb C\rightarrow\mathbb C^m$ and $p\in\mathcal P(\mathbb C^{m+1})$ by $n_{p_f}(r)$ we denote the number of zeros (counted with their multiplicities) of the univariate holomorphic function $p_f$ in the closed disk $\bar{\mathbb D}_r:=\{z\in\mathbb C\, :\, |z|\le r\}$. (We set $n_{p_f}=-\infty$ if $p_f=0$.) Let 
\[
N^k(r;f):=\sup\bigl\{n_{p_f}(r)\, :\, p\in\mathcal P(\mathbb C^{m+1}),\  {\rm deg}\, p\le k\bigr\}.
\]
The integer-valued function $N^k(\,\cdot\, ;f)$ is nonnegative locally bounded from above and satisfies for all $r>1$ (see \cite[Cor.\,2.3]{CP1}),
\[
\frac{1}{\ln r+16}\,u_{\bar{\mathbb D}_1}^k\bigl(\mbox{$\frac r3$}\bigr)\le N^k(r;f)\le  2u_{\bar{\mathbb D}_1}^k(3r).
\]
Thus, the classes $\langle N^{\cdot}(r;f)\rangle\in\mathscr R$, $r>0$, form
a subsemigroup and a chain $\mathscr N_f$ such that
\[
\bigl\langle u_{\bar{\mathbb D}_1}^\cdot\bigl(\mbox{$\frac r3$}\bigr)\bigr\rangle\le \langle N^\cdot (r;f)\rangle\le  \langle u_{\bar{\mathbb D}_1}^\cdot (3r)\rangle\quad {\rm for\ all}\quad r>1.
\]
In particular, Theorem \ref{theo1.3}\,(b),(d) implies that $\Gamma_f\subset\mathbb C^{m+1}$ admits the Bernstein type inequality of an exponent $\mu$ if and only if $\langle\mu\rangle\in\mathscr R$ is an upper bound of $\mathscr N_f$. In addition, such $\mu$ is optimal for $\Gamma_f$ if and only if $\langle\mu\rangle\in\mathscr N_f$. In this case, $\langle\mu\rangle\in\mathscr R$ is the maximal element of $\mathscr N_f$ so that as an optimal exponent one can take, e.g., the function $N^{\cdot}(r_0;f):\mathbb Z_+\rightarrow\mathbb Z_+$ for a sufficiently large $r_0$.
}
\end{R}
\subsection{} In this section we describe some classes of graphs $\Gamma_f$ admitting Bernstein type inequalities of exponents of polynomial growth.  \smallskip

First, we show that power functions $\mu_{{\rm id}}^{d}(k):=k^d$, $k\in\mathbb Z_+$, $d\in\mathbb N$, are optimal exponents in Bernstein type inequalities on some graphs $\Gamma_f$. \smallskip

In what follows, for holomorphic maps $f_j:\mathbb C^{n_j}\rightarrow \mathbb C^{m_j}$, $1\le j\le l$, by
$f_1\times\cdots\times f_l:\mathbb C^{n_1+\cdots + n_l}\rightarrow\mathbb C^{m_1+\cdots +m _l}$ we denote a map given by the formula
\begin{equation}\label{product}
(f_1\times\cdots\times f_l)(z_1,\dots, z_l):=\bigl(f_1(z_1),\dots, f_l(z_l)\bigr),\qquad z_j\in\mathbb C^{n_j},\quad 1\le j\le l.
\end{equation}

A map $f:\mathbb C\rightarrow\mathbb C^m$ is said to be {\em exponential of maximal transcendence degree} if there are
linearly independent over $\mathbb Q$ complex numbers $\alpha_1,\dots, \alpha_m$ such that\begin{equation}\label{expo}
f(z):=\bigl(e^{\alpha_1 z},\dots, e^{\alpha_m z}\bigr),\quad z\in\mathbb C.
\end{equation}
(In this case the coordinates of $f$ are algebraically independent over the field of rational functions on $\mathbb C$ and so the {\em Zariski closure} of $\Gamma_f\subset\mathbb C^{m+1}$ coincides with $\mathbb C^{m+1}$.)

Let $f_j:\mathbb C\rightarrow\mathbb C^{m_j}$, $1\le j\le l$, be exponential maps of maximal transcendence degrees and $P,Q$ be holomorphic polynomial automorphisms of $\mathbb C^l$  and $\mathbb C^{m_1+\cdots +m_l}$, respectively. We set 
\begin{equation}\label{compos}
\bar m:=\max_{1\le j\le l} m_j\quad {\rm and}\quad F_{P,Q}:=Q\circ (f_1\times\cdots\times f_l)\circ P.
\end{equation}
(The coordinates of the map $F_{P,Q}:\mathbb C^l\rightarrow\mathbb C^{m_1+\cdots +m_l}$ are functions of the form $\sum_{j=1}^J p_j e^{q_j}$, $p_j,q_j\in\mathcal P(\mathbb C^l)$, $1\le j\le J$, called the {\em generalized exponential polynomials} on $\mathbb C^l$.)
\begin{Th}\label{theo1.4}
Graph $\Gamma_{F_{P,Q}}\subset\mathbb C^{l+m_1+\cdots + m_l}$ admits the Bernstein type inequality of optimal exponent $\mu_{{\rm id}}^{\bar m+1}$.
\end{Th}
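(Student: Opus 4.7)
The strategy decomposes the problem into three stages: (1) establish the base case of a single exponential map of maximal transcendence degree; (2) iterate through the direct product via Theorem~\ref{theo1}(f) and Remark~\ref{rem1.3}(1); (3) transfer through the polynomial automorphisms $P, Q$ via Theorem~\ref{theo1}(b).

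For stage~(1), fix $f(z) := (e^{\alpha_1 z},\dots, e^{\alpha_m z}) : \mathbb C \to \mathbb C^m$ with $\alpha_j$ linearly independent over $\mathbb Q$; I claim $\Gamma_f$ has optimal exponent $\mu_{\rm id}^{m+1}$. For any $p \in \mathcal P(\mathbb C^{m+1})$ of degree $\le k$,
\[
p_f(z) = \sum_{\vec i \in \Lambda_k} Q_{\vec i}(z)\, e^{\beta_{\vec i} z}, \qquad \beta_{\vec i} := \sum_{j=1}^m i_j \alpha_j,
\]
where $\Lambda_k := \{\vec i \in \mathbb Z_+^m : |\vec i| \le k\}$, each $Q_{\vec i}$ has degree $\le k$, and $\mathbb Q$-linear independence of the $\alpha_j$ makes the $\beta_{\vec i}$ pairwise distinct. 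A Tijdeman-type zero-counting estimate applied to this representation yields $n_{p_f}(r) \le \sum_{\vec i}(\deg Q_{\vec i}+1) + O(r\max_{\vec i}|\beta_{\vec i}|) = O(k^{m+1})+O(rk)$, so $N^k(r;f) \le C(r)\,k^{m+1}$; by Remark~\ref{rem1.3}(2) this produces the Bernstein inequality of exponent $\mu_{\rm id}^{m+1}$. For optimality, I would use a dimension count: $\dim \mathcal P_{\le k}(\mathbb C^{m+1}) = \binom{k+m+1}{m+1} \sim c_m k^{m+1}$, and the map $p \mapsto p_f$ is injective (distinct exponentials $e^{\beta z}$ are $\mathbb C(z)$-linearly independent, so $p_f \equiv 0$ forces every coefficient of $p$ to vanish). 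Choosing $\dim-1$ distinct interpolation nodes in $\bar{\mathbb D}_{r_0}$ for any fixed $r_0>0$ gives a nonzero $p$ of degree $\le k$ with $p_f$ vanishing at all nodes, so $N^k(r_0;f) \ge c\, k^{m+1}$. Combined with the upper bound, every class in $\mathscr N_f$ equals $\langle\mu_{\rm id}^{m+1}\rangle$, which by Remark~\ref{rem1.3}(2) is exactly the optimality assertion.

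Stage~(2) is immediate: iterating Remark~\ref{rem1.3}(1) yields that $g := f_1 \times \cdots \times f_l$ has optimal exponent $\max_j \mu_{\rm id}^{m_j+1} = \mu_{\rm id}^{\bar m+1}$. For stage~(3), set $\hat p(w,y) := p(P^{-1}(w),Q(y))$, a polynomial of degree $\le d\,\deg p$ with $d := \max(\deg P^{-1}, \deg Q)$, satisfying the key identity $p_{F_{P,Q}}(z) = \hat p_g(P(z))$. Since $P$ is a polynomial automorphism, $K := P(\{\|z\|\le r\})$ is a nonpluripolar compact subset of $\mathbb C^l$, contained in some ball $\{\|w\|\le R\}$ with $R$ also containing $P(\{\|z\|\le er\})$. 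Applying Theorem~\ref{theo1}(b) to $\Gamma_g$ with this $K$ and pulling back by $P$ gives the Bernstein inequality for $\Gamma_{F_{P,Q}}$ between the balls of radii $er$ and $r$, with exponent $\mu_{\rm id}^{\bar m+1}(dk) \simeq \mu_{\rm id}^{\bar m+1}(k)$. Running the same argument with $P, Q, F_{P,Q}$ replaced by $P^{-1}, Q^{-1}, g$ shows every exponent admissible for $\Gamma_{F_{P,Q}}$ is admissible (up to equivalence) for $\Gamma_g$, so optimality transfers. The principal obstacle is the sharp zero-counting bound in stage~(1): extracting exponent exactly $m+1$ requires combining the $O(k^m)$ bound on the number of distinct $\beta_{\vec i}$ (forced by the $\mathbb Q$-linear independence of the $\alpha_j$) with the uniform $O(k)$ bound on each $\deg Q_{\vec i}$, through a sufficiently sharp Tijdeman-type inequality for generalized exponential polynomials.
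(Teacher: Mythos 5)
Your proposal follows the same three-stage plan as the paper's proof (base case of a single exponential map of maximal transcendence degree, products via Theorem~\ref{theo1}(f) and Remark~\ref{rem1.3}(1), and transfer through the polynomial automorphisms $P,Q$), relies on the same van der Poorten/Tijdeman estimate for exponential polynomials, and uses the same dimension-count idea for the optimality lower bound, so it is essentially the paper's argument. The only cosmetic differences are that you route the upper bound through zero counting and Remark~\ref{rem1.3}(2) rather than applying van der Poorten's Bernstein inequality \eqref{eq4.18} directly, you obtain the lower bound by interpolating at $d_{k,m+1}-1$ distinct nodes rather than forcing a zero of multiplicity $d_{k,m+1}-1$ at the origin, and your automorphism-transfer step is a sketch of what the paper carries out carefully as Lemma~\ref{lem4.1} (where the fact that $\mu_{\rm id}^{\bar m+1}$ is a power function is used to control the degree dilation by $\max(s,t)$ across arbitrary subsequences).
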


Our next result reveals the basic property of Bernstein type inequalities on the graphs of nonpolynomial entire functions.
\begin{Th}\label{theo1.5}
Let $f$ be a nonpolynomial entire function on $\mathbb C^n$. Then its graph $\Gamma_f\subset\mathbb C^{n+1}$ admits the Bernstein type inequality of an exponent $\mu$  such that
\begin{equation}\label{eq1.6}
1+\frac 1n \le\varliminf_{k\rightarrow\infty}\frac{\ln\mu(k)}{\ln k}\le 2.
\end{equation}
\end{Th}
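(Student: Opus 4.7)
The lower bound $\varliminf_{k\to\infty}\ln\mu(k)/\ln k\ge 1+1/n$ is an immediate consequence of Theorem~\ref{theo1}(c). Since $f$ is nonpolynomial entire on $\mathbb C^n$, the graph $\Gamma_f$ is not algebraic, so every Bernstein exponent $\mu$ of $\Gamma_f$ satisfies $\varliminf_k\mu(k)/k^{1+1/n}>0$. This forces $\mu(k)\ge c\,k^{1+1/n}$ for all $k$ sufficiently large, whence the lower bound on $\varliminf\ln\mu(k)/\ln k$ follows.

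For the upper bound I construct an exponent $\mu$ via Taylor-polynomial approximation of $f$ combined with the Schwarz lemma. Fix $r>0$ and write $f=\sum_\alpha c_\alpha z^\alpha$, $f_N(z):=\sum_{|\alpha|\le N}c_\alpha z^\alpha$. For $p\in\mathcal P(\mathbb C^{n+1})$ of degree $k$ normalized by $\max_{\|z\|\le r}|p(z,f(z))|=1$, the substitution $g_N(z):=p(z,f_N(z))$ is a holomorphic polynomial on $\mathbb C^n$ of degree at most $kN$, and the Bernstein-Walsh inequality yields
\[
\max_{\|z\|\le er}|g_N(z)|\le e^{kN}\max_{\|z\|\le r}|g_N(z)|.
\]
The decisive observation is that the error $h_N:=p_f-g_N$ vanishes at the origin to order at least $N+1$, because $f-f_N$ does; hence the Schwarz lemma for holomorphic functions on the ball of $\mathbb C^n$ gives
\[
\max_{\|z\|\le\rho}|h_N(z)|\le(\rho/R)^{N+1}\max_{\|z\|\le R}|h_N(z)|,\qquad 0<\rho<R.
\]
With $N$ chosen as a function of $k$ so as to balance these two estimates along a subsequence of $k$'s, one obtains $\max_{\|z\|\le er}|p_f|\le C_r^{\,k^{2+o(1)}}$, producing a Bernstein exponent $\mu$ with $\varliminf\ln\mu(k)/\ln k\le 2$.

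The main technical obstacle is closing the self-consistency loop among the maxima of $|p_f|$, $|g_N|$ and $|h_N|$ over balls of radii $r$, $er$, and $R>er$. Since $\Gamma_f$ is pluripolar in $\mathbb C^{n+1}$ (by Sadullaev's theorem, applicable because $f$ is nonpolynomial), no standard Bernstein-Walsh argument on $\mathbb C^{n+1}$ directly bounds the coefficients of $p$ in terms of $\max_{\|z\|\le r}|p_f|$. I invoke the extremal plurisubharmonic functions associated with the finite-dimensional space $V_k:=\{p_f:p\in\mathcal P(\mathbb C^{n+1}),\,\deg p\le k\}$, as developed in \cite{B}, to propagate $\max_{\|z\|\le R}|p_f|$ from $\max_{\|z\|\le r}|p_f|$ with a controlled $k$-dependence, thereby closing the loop. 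The fact that the final upper bound $2$ is independent of $n$ reflects the essentially one-variable nature of the decisive Schwarz step: upon restriction to any complex line through the origin, $f$ becomes an entire function of one variable and the relevant function spaces have dimension $\binom{k+2}{2}\sim k^2$, irrespective of $n$.
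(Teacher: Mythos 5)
Your lower bound is fine and coincides with the paper's: cite Theorem~\ref{theo1}\,(c), done. The problem is the upper bound, where you have a genuine gap that is not merely a matter of polishing.

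The Taylor-truncation/Schwarz scheme runs into the circularity you yourself flag. To close it you invoke ``the extremal plurisubharmonic functions associated with the finite-dimensional space $V_k$\dots to propagate $\max_{\|z\|\le R}|p_f|$ from $\max_{\|z\|\le r}|p_f|$ with a controlled $k$-dependence.'' But that controlled $k$-dependence \emph{is} the Bernstein exponent being proved. Finite dimensionality of $V_k$ gives the existence of a constant $c_k$ comparing sup norms on two compacta (equivalently, bounding the coefficients of $p$ in terms of $\max_{\|z\|\le r}|p_f|$), but says nothing about how $c_k$ grows in $k$; for non-algebraic $\Gamma_f$ there is no a priori polynomial bound. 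If you try to bootstrap from an arbitrary initial exponent $\mu_0$ (as supplied by Theorem~\ref{theo1}\,(a)), the balance between the Bernstein--Walsh term $e^{kN}$ and the Schwarz term forces $N$ to grow like $k + \mu_0(k)/\ln(R/er)$ (to kill $\ln c_k \lesssim \mu_0(k)$), so the resulting exponent is $\sim k\cdot N \sim k\,\mu_0(k)$ --- the iteration gets worse, not better, and never stabilizes at $k^{2+o(1)}$. The dimension-count heuristic $\binom{k+2}{2}\sim k^2$ is suggestive but is not a proof: restricting to a line reduces the polynomial space to $\mathcal P_k(\mathbb C^2)$, yet the Bernstein constant for that restricted $V_k$ is still uncontrolled by dimension alone.

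For comparison, the paper's proof does not attempt to derive the exponent $2$ from scratch: it simply quotes the hard estimate \cite[Th.\,2.5\,(c)]{B}, which already supplies, along a sequence $n_j\to\infty$ and for $r\le r_j$, the bound $M_{g_f}(er)\le e^{C_{\rho_f}n_j^{2+\epsilon_j}}M_{g_f}(r)$ for all $g\in\mathcal P_{n_j}(\mathbb C^{n+1})$. The only remaining work is bookkeeping: patch this together with an arbitrary exponent $\nu$ from Theorem~\ref{theo1}\,(a) on the complementary set of degrees, and with an auxiliary finite-dimensional compactness argument (Lemma~\ref{lem5.1}) to cover radii $r>r_j$. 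Your route, if it is to succeed, would have to reproduce the substance of \cite[Th.\,2.5\,(c)]{B}, which is precisely the step your sketch leaves open.
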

It is unclear whether this result is sharp as currently there are no examples of graphs $\Gamma_f\subset\mathbb C^n$, $n\ge 2$, admitting Bernstein type inequalities of exponents $\mu$ for which the corresponding limit in \eqref{eq1.6} is strictly less than two.\smallskip

In our subsequent formulations we use the following definitions and notation.\smallskip

By $\mathbb B_r^n\subset\mathbb C^n$ we denote the open Euclidean ball of radius $r$ centered at $0$; we set
$\mathbb B^n:=\mathbb B_1^n$, $ \mathbb D_r:=\mathbb B_r^1$ and $\mathbb D:=\mathbb B^1$.

For a continuous function $f:\mathbb B_r^n\rightarrow\mathbb C$ we define
\[
M_f(r):=\sup_{\mathbb B_r^n} |f|,\qquad m_f(r):=\ln M_f(r).
\]

Next, recall that an entire function $f$ on $\mathbb C^n$ is of order $\rho_f\ge  0$ if
\[
\rho_f= \varlimsup_{r\rightarrow\infty}\frac{\ln m_f(r)}{\ln r}.
\] 
If $\rho_f <\infty$, then $f$ is called of finite order.

For a nonconstant entire function $f$ on $\mathbb C^n$ of order $\rho_f$ we set
\[
\phi_f(t):=m_f(e^t),\quad t\in\mathbb R.
\]
Then $\phi_f$ is a convex increasing function. 

By $\mathscr C$ we denote the class of nonpolynomial entire functions $f$ satisfying one of the following conditions:
\begin{itemize}
\item[(I)]
If $\rho_f<\infty$,
\begin{equation}\label{cI}
\varlimsup_{t\rightarrow\infty}\frac{\phi_f(t+1)-\phi_f(t)}{\phi_f(t)-\phi_f(t-1)}<\infty .
\end{equation}
\item[(II)]
If $\rho_f=\infty$, 
\begin{equation}\label{cII}
\lim_{t\rightarrow\infty}t^2\left(\frac{1}{\ln\psi(t)}\right)'=0,
\end{equation}
where $\psi\in C(\mathbb R)$ is a convex increasing function such that
\[
\lim_{t\rightarrow\infty}\frac{\ln\psi(t)}{\ln\phi_f(t)}=1.
\]
\end{itemize}
\begin{R}
{\rm (1) Each convex function is differentiable at all but at most countably many points. Thus the limit in \eqref{cII} is taken over the domain of $\psi'$.\smallskip

\noindent (2) Conditions \eqref{cI} and \eqref{cII} are complimentary to each other (i.e. there are no entire functions satisfying both of these conditions). 
}
\end{R}

Let $f_j:\mathbb C^{n_j}\rightarrow\mathbb C$, $1\le j\le m$, be functions of class $\mathscr C$ and $P,Q$ be holomorphic polynomial automorphisms of  $\mathbb C^{n_1+\cdots +n_m}$ and $\mathbb C^m$, respectively. 
\begin{Th}\label{theo1.7}
The graph
$\Gamma_{F_{P,Q}}\subset\mathbb C^{n_1+\cdots +n_m+m}$ of the map $F_{P,Q}:=Q\circ (f_1\times\dots\times f_m)\circ P:\mathbb C^{n_1+\cdots+n_m}\rightarrow\mathbb C^m$ admits the Bernstein type inequality of exponent $\mu(k):=k^{2+\varepsilon (k)}$, $k\in\mathbb Z_+$, where $\varepsilon= 0$ if all $\rho_{f_j}<\infty$ and $\varepsilon\ge 0$ decreases to $0$ if one of $\rho_{f_j}=\infty$.

Moreover, if all $n_j=1$ and all $\rho_{f_j}<\infty$, then $\mu_{{\rm id}}^2$ is an optimal exponent for $\Gamma_{F_{P,Q}}$.
\end{Th}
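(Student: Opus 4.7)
The argument proceeds in three stages: dispose of $P$ and $Q$, reduce to a single factor $f_j$, establish the one-function estimate, and finally verify optimality when all $n_j=1$ and all $\rho_{f_j}<\infty$.

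Set $G:=f_1\times\cdots\times f_m:\mathbb C^{n_1+\cdots+n_m}\to\mathbb C^m$ and $D:=\max(\deg P^{-1},\deg Q)$. For a holomorphic polynomial $p$ of degree $k$, the substitution $w:=P(z)$ gives $p(z,F_{P,Q}(z))=q(w,G(w))$ with $q(w,u):=p(P^{-1}(w),Q(u))$ a polynomial of degree at most $Dk$. Applying the equivalent form of the Bernstein inequality from Theorem \ref{theo1}(b) with the nonpluripolar compact set $K:=P(\bar{\mathbb B}^{n_1+\cdots+n_m}_r)$ transfers a Bernstein inequality of exponent $\mu$ for $\Gamma_G$ into one for $\Gamma_{F_{P,Q}}$ of exponent $\mu(Dk)$, equivalent to $\mu$ whenever $\mu$ has polynomial growth. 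Iterated use of Theorem \ref{theo1}(f) then reduces the problem to establishing the inequality of exponent $k^{2+\varepsilon(k)}$ for each $\Gamma_{f_j}$ separately.

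For a single $f\in\mathscr C$ on $\mathbb C^n$, I would verify the criterion of Theorem \ref{theo1.3}(b), namely the local boundedness of $u_{K,\mu}(\,\cdot\,;f)$ for $\mu(k)=k^{2+\varepsilon(k)}$. The approach, in the spirit of \cite{B}, is to expand $p_f$ in its Taylor series about $0$ and bound its coefficients via Cauchy estimates in terms of $M_{p_f}(R)$ for suitably chosen $R$. Since $\deg p=k$, the growth of $p_f$ is dominated by that of $f^k$, so $m_{p_f}(R)$ is controlled by $k\phi_f(\ln R)+O(k\ln R)$. The analytic core is that conditions \eqref{cI} and \eqref{cII} control the doubling behavior of $\phi_f$: \eqref{cI} ensures that successive unit increments of $\phi_f$ are comparable, while \eqref{cII} imposes a tame derivative on $1/\ln\psi$. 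Combined with Cauchy estimates, this gives geometric decay of the Taylor tail and allows truncation of the series at an index of size $O(k^2)$ in the finite-order case (respectively $O(k^{2+\varepsilon(k)})$ with $\varepsilon\downarrow 0$ in the infinite-order case), while keeping the error on $\bar{\mathbb B}_{er}$ dominated by the maximum on $\bar{\mathbb B}_r$.

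For the optimality part, the assumption $n_j=1$ activates Remark \ref{rem1.3}(2), which identifies the optimal exponent of $\Gamma_{f_j}$ with $\langle N^{\cdot}(r;f_j)\rangle$. The upper bound $N^k(r;f_j)\le C_rk^2$ follows from the just-established Bernstein inequality of exponent $k^2$ together with the Jensen-type bound stated after \eqref{e1}. For the matching lower bound $N^k(r;f_j)\ge c_rk^2$, I would use an interpolation construction: the space of polynomials in $(z,w)$ of degree $k$ has dimension $\sim k^2$, so for a sufficiently large $r$ one can prescribe $\sim k^2$ zeros on $\Gamma_{f_j}\cap(\bar{\mathbb D}_r\times\mathbb C)$, with nonvanishing of the resulting $p$ guaranteed by the transcendence of $f_j$ and the growth bound $M_{f_j}(r)\le\exp(Cr^{\rho_{f_j}})$ controlling the geometry of the interpolation nodes. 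Hence $\mu_{{\rm id}}^2$ is optimal for each $\Gamma_{f_j}$, and Remark \ref{rem1.3}(1), combined with the reduction of the first paragraph, propagates this optimality to $\Gamma_{F_{P,Q}}$. The principal obstacle is the third paragraph: extracting the tight exponent $k^{2+\varepsilon(k)}$ from \eqref{cI} and \eqref{cII}, going beyond the weaker bound $\varliminf_{k}\ln\mu(k)/\ln k\le 2$ supplied by Theorem \ref{theo1.5}.
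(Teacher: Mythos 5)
Your structural outline — disposing of $P,Q$ and the product $f_1\times\cdots\times f_m$ by a substitution/Theorem \ref{theo1}(f) argument, reducing to a single factor $f_j\in\mathscr C$, and then proving optimality via Remark \ref{rem1.3}(2) — matches the architecture of the paper's proof. The reduction step you describe is essentially the argument the paper gives (using the function $u_{K,\mu}$ and the map $I:\mathcal P_k\to\mathcal P_{k\max(s,t)}$, as in Lemma \ref{lem4.1}), and your optimality argument is a legitimate rephrasing of the paper's: the interpolation construction you sketch for the lower bound on $N^k(r;f_j)$ is exactly the dimension-count argument of Lemma \ref{lem2.3}, i.e.\ the content of Theorem \ref{theo1}(c), which the paper simply cites (applied to each scalar $f_j:\mathbb C\to\mathbb C$, and pulled back to $\Gamma_{F_{P,Q}}$ through Theorem \ref{theo1}(f) and Lemma \ref{lem4.1}).

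The genuine gap, which you yourself flag as ``the principal obstacle,'' is the single-factor upper bound: that $\Gamma_{f_j}$ admits a Bernstein inequality of exponent $k^{2+\varepsilon(k)}$ with $\varepsilon$ decreasing to $0$ (and $\varepsilon\equiv 0$ when $\rho_{f_j}<\infty$). Your sketch via Taylor expansion of $p_f$, Cauchy estimates, and truncation at index $O(k^{2+\varepsilon(k)})$ is heuristic; it does not actually establish the estimate, and it is far from clear that this line of attack yields the tight exponent rather than, say, only a bound along a subsequence as in Theorem \ref{theo1.5}. The paper does not reprove this bound at all: it invokes Theorem A(b) (i.e.\ \cite[Theorem 2.8]{B}), noting that a degree-$k$ polynomial lies in $\mathcal F_{p,q}(r;t;M)$ with $p=k\ln t$ and $q=k$, so Theorem A(b) directly yields $M_{g_f}(er)/M_{g_f}(r)\le e^{Ck^{2+\varepsilon(k)}}$ for $r\le r(k)$, and then one extends to all $r$ exactly as in Lemma \ref{lem5.1}. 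You should have recognized that Theorem A, which the paper sets up in Section 6.1 precisely for this purpose, supplies the missing core estimate; reproving it from scratch is not feasible in a paragraph, and the mechanism in \cite{B} (counting zeros of $g_f$ on families of complex lines, Jensen-type estimates, and a careful choice of the scale $r(k)$ driven by $\phi_f$) is quite different from the Taylor-truncation route you propose.

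A smaller omission: your optimality paragraph establishes $\mu_{\rm id}^2$ as optimal for each $\Gamma_{f_j}$, but to conclude optimality for $\Gamma_{F_{P,Q}}$ you need both directions of Theorem \ref{theo1}(f) (to pass between $\Gamma_{f_j}$ and $\Gamma_{f_1\times\cdots\times f_m}$) and the optimality-preserving part of Lemma \ref{lem4.1} (to pass between $\Gamma_F$ and $\Gamma_{F_{P,Q}}$); your appeal to ``Remark \ref{rem1.3}(1) combined with the reduction'' gestures at this but should be made explicit, since the reduction in your first paragraph is phrased only in the direction $\Gamma_G\Rightarrow\Gamma_{F_{P,Q}}$.
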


Our next result describes some class of curves $\Gamma_f\subset\mathbb C^{m+1}$ admitting  Bernstein type inequalities of exponents of polynomial growth.
Up till now, for $m\ge 2$ the only known examples of such curves were graphs of holomorphic maps $f:\mathbb C\rightarrow \mathbb C^m$ with coordinates being exponents of polynomials. As follows from the results established in  \cite{BBL}  graphs of such maps admit Bernstein type inequalities of exponents $\mu_{\rm id}^{3m+3}$. 
\begin{Th}\label{theo1.8}
Suppose that nonpolynomial entire functions $f_j:\mathbb C\rightarrow\mathbb C$, $1\le j\le m$, of class $\mathscr C$ are such that $\rho_{f_1}\le\cdots\le\rho_{f_m}$ and
\begin{equation}\label{eq1.4}
\lim_{r\rightarrow\infty}\frac{m_{f_j}(r)-m_{f_j}\bigl(\frac re\bigr)}{\sqrt{m_{f_{j+1}}(r)-m_{f_{j+1}}\bigl(\frac{r}{e}\bigr)}}=0\qquad {\rm if}\quad \rho_{f_{j+1}}<\infty,\quad
j\in\{1,\dots, m-1\},
\end{equation}
and
\begin{equation}\label{eq1.5}
\varlimsup_{r\rightarrow\infty}\frac{\ln m_{f_j}(r)}{\ln m_{f_{j+1}}(\frac re)}<\frac 12\qquad {\rm if}\quad \rho_{f_j}=\infty,\quad  j\in \{1,\dots, m-1\}.
\end{equation}
Then for $f=(f_1,\dots, f_{m}):\mathbb C\rightarrow\mathbb C^{m}$ its graph $\Gamma_f\subset\mathbb C^{m+1}$ admits the Bernstein type inequality of exponent $\mu(k):=k^{2^{m}+\varepsilon(k)}$, $k\in\mathbb Z_+$, for some $\varepsilon :
\mathbb Z_+\rightarrow \mathbb R_+$ decreasing to zero. Here $\varepsilon= 0$ if all $\rho_{f_j}<\infty$.
\end{Th}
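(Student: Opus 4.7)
The natural approach is induction on $m$, working via the zero-counting reformulation in Remark~\ref{rem1.3}(2): for $n=1$, $\Gamma_f$ admits the Bernstein inequality of exponent $\mu$ iff $N^k(r;f)\le C(r)\mu(k)$ for every $r>0$. The base case $m=1$ is Theorem~\ref{theo1.7} applied to $f_1$ alone, giving the exponent $\mu_{\rm id}^{2+\varepsilon}$. For the inductive step, fix $p\in\mathcal P(\mathbb C^{m+1})$ of degree $k$ and expand it as a polynomial in its last variable,
\[
p_f(z)=\sum_{j=0}^{k}Q_j(z)\,f_m(z)^j,\qquad Q_j(z):=q_j\bigl(z,f_1(z),\dots,f_{m-1}(z)\bigr),
\]
with each $q_j\in\mathcal P(\mathbb C^m)$ of degree $\le k$. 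The inductive hypothesis applied to $g:=(f_1,\dots,f_{m-1})$ supplies, through Theorem~\ref{theo1.3}(b) and Remark~\ref{rem1.3}(2), coefficient bounds $\ln M_{Q_j}(r')\lesssim C_1(r')\,k^{2^{m-1}+\tilde\varepsilon(k)}$ and zero counts $n_{Q_j}(r')\lesssim C_2(r')\,k^{2^{m-1}+\tilde\varepsilon(k)}$ for every $r'>0$.

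The crux is to promote these coefficient estimates to the bound $N^k(r;p_f)\le C(r)k^{2^m+\varepsilon(k)}$, and this is where the gap hypothesis \eqref{eq1.4} (respectively \eqref{eq1.5}) plays a decisive role. The \emph{square root} on the right of \eqref{eq1.4} is not incidental: squaring gives
\[
\bigl(m_{f_{m-1}}(r)-m_{f_{m-1}}(r/e)\bigr)^{2}=o\bigl(m_{f_m}(r)-m_{f_m}(r/e)\bigr),
\]
which encodes exactly the arithmetic that converts an inductive exponent $2^{m-1}$ into $2\cdot 2^{m-1}=2^{m}$. I would choose auxiliary radii $r<r_1<r_2$ depending only on $r$, apply a Jensen-type inequality $n_{p_f}(r)\lesssim m_{p_f}(r_2)-m_{p_f}(r_1)$, and use \eqref{eq1.4} to show that on an appropriate annulus a single term $Q_{j_0}f_m^{j_0}$ dominates the others. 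The resulting bound for $m_{p_f}(r_2)$ then splits as $k\cdot\ln M_{Q_{j_0}}(r_2)+k\cdot m_{f_m}(r_2)$; the first piece is controlled by the inductive hypothesis, while the factor $m_{f_m}$ is absorbed through \eqref{eq1.4} into a second copy of $k^{2^{m-1}+\tilde\varepsilon(k)}$, producing the claimed $k^{2^m+\varepsilon(k)}$.

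The main obstacle will be executing this absorption rigorously, in particular proving that each new function in the chain really \emph{squares} the zero-counting complexity — the heuristic above needs to be carried out with a careful extremal choice of $j_0$ and of the radii $r_1,r_2$, and with quantitative control of the error terms coming from the comparison of $|Q_{j_0}f_m^{j_0}|$ against the remaining summands. A secondary difficulty is the infinite-order case, where \eqref{eq1.5} replaces \eqref{eq1.4}: one works with the convex majorant $\psi$ from condition~(II) of class~$\mathscr C$ and must check, using \eqref{cII}, that the residual error $\varepsilon(k)$ accumulated over the $m$ inductive layers still decreases to zero.
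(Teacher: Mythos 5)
Your high-level framing (induction on $m$, base case from the single-function theorem, the gap hypothesis encoding a ``squaring'' of the exponent) matches the shape of the paper's argument, but the mechanism you propose for carrying out the inductive step is different from the one the paper uses, and as sketched it has a genuine gap.

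The paper does not expand $p_f$ in powers of $f_m$ and does not rely on a dominant-term analysis. Its engine is Theorem~A (derived from \cite[Th.~2.8]{B}), which applies to a function $g(z,w)$ lying in a class $\mathcal F_{p,q}(r;t;M)$ — a Bernstein-type bound of size $e^p$ in the $z$-variable and a bounded Bernstein index $q$ in the $w$-variable — and produces, after substituting $w = f(z)$ with $f\in\mathscr C$, a new Bernstein ratio of size $e^{Ck^{1+\varepsilon(k)}\max(p,q)}$ with $p\le k$. Taking $k\sim p$ is what squares the exponent: $p\mapsto p^{2+\varepsilon}$. The actual induction (Theorem~\ref{theo6.3}) tracks membership of the partial substitutions $g_j(z,\mathbf z_j) := g(z,f_1(z),\dots,f_j(z),\mathbf z_j)$ in $\mathcal F_{p_j(k),k}\bigl(er_{f_{j+1}}(p_j(k));e;\infty\bigr)$ with $p_j(k)=C_j k^{2^j+\varepsilon_j(k)}$, and the passage from $j$ to $j+1$ is exactly one application of Theorem~A(b) to $f_{j+1}$. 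So the squaring is supplied by the class-$\mathscr C$ composition lemma, not by the gap hypotheses.

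This is where your reading of \eqref{eq1.4}--\eqref{eq1.5} goes wrong. They do not serve to isolate a dominant term $Q_{j_0}f_m^{j_0}$ nor to ``absorb'' $m_{f_m}$ into a second factor of $k^{2^{m-1}}$; they are used purely to verify the radius-nesting inequality $er_{f_{j+2}}(p_{j+1}(k)) \le r_{f_{j+1}}(p_j(k))$ (equation~\eqref{eq6.31}), i.e.\ that the effective domain delivered by one application of Theorem~A is large enough to feed the next one. Since $p_{j+1}\approx p_j^2$ and the $r_{f_s}$ are roughly right inverses of $k_{f_s}(r)\approx m_{f_s}(r)-m_{f_s}(r/e)$, nesting requires $k_{f_{j+2}}(r/e)\gg (k_{f_{j+1}}(r))^2$, which is precisely what Lemma~\ref{lem6.4} extracts from \eqref{eq1.4} (and Lemmas~\ref{lem6.5}, \ref{lem6.6} from \eqref{eq1.5}). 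The square root in \eqref{eq1.4} is therefore \emph{adapted to} the squaring produced elsewhere, not its source. Finally, the dominant-term step as you describe it is not established and is unlikely to go through for arbitrary $p\in\mathcal P_k$: nothing forces a single $Q_{j_0}f_m^{j_0}$ to dominate on any annulus, and even where it does, Jensen's inequality on $m_{p_f}(r_2)-m_{p_f}(r_1)$ does not simply decouple because the dominating index $j_0$ may change between $r_1$ and $r_2$; you would need new ideas (essentially reconstructing the content of \cite[Th.~2.8]{B}) to make it rigorous.
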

\begin{R}\label{remark1.9}
{\rm (1) The Zariski closure of $\Gamma_f$ is $\mathbb C^{m+1}$, (i.e. each holomorphic polynomial vanishing on $\Gamma_f$ is zero), cf. Remark \ref{rem6.6} in section 6.3.\smallskip

\noindent (2) According to the Jensen type inequality, see \cite[Lm.\,1]{VP}, for each $p\in\mathcal P(\mathbb C^{m+1})$ the number of zeros (counted with their multiplicities) of the function $p_f(z):=p(z,f_1(z),\dots, f_m(z))$, $z\in\mathbb C$, in the closed disk $\bar{\mathbb D}_r$, is bounded from above by $C(r)({\rm deg}\,p)^{2^m+1}$ for some positive constant $C(r)$, $r>0$.\smallskip

\noindent (3) Let $f^j=(f_1^j,\dots, f_{m_j}^j):\mathbb C\rightarrow\mathbb C^{m_j}$, $m_j\in\mathbb N$, $1\le j\le l$, be holomorphic maps satisfying conditions of Theorem \ref{theo1.8}. Let
$P$ and $Q$ be  holomorphic polynomial automorphisms of $\mathbb C^l$ and $\mathbb C^{m_1+\cdots +m_l}$, respectively. We set 
\[
F_{P,Q}:=Q\circ (f^1\times\cdots\times f^l)\circ P:\mathbb C^l\rightarrow \mathbb C^{m_1+\cdots +m_l}\quad {\rm and}\quad \bar{m}:=\max_{1\le j\le l}m_j.
\]
Then the graph $\Gamma_{F_{P,Q}}\subset\mathbb C^{l+m_1+\cdots m_l}$ of $F_{P,Q}$ satisfies the Bernstein type inequality of exponent $\mu(k):=k^{2^{\bar{m}}+\varepsilon(k)}$, $k\in\mathbb Z_+$, for some $\varepsilon :
\mathbb Z_+\rightarrow \mathbb R_+$ decreasing to zero. Here $\varepsilon= 0$ if all $\rho_{f_i^j}<\infty$.

\noindent The proof of this result follows from Theorems \ref{theo1.8} and \ref{theo1}\,(f) by means of the arguments of the proof of Theorem \ref{theo1.7} in section 6.2 (cf. also Lemma \ref{lem4.1}).
}
\end{R}

We illustrate  the theorem by a simple example (see section 1.4 for other examples).
\begin{E}\label{ex1.9}
{\rm By $e^{\circ p}:\mathbb C\rightarrow \mathbb C$, $p\in\mathbb N$, we denote the $p$ times composition of the exponential function with itself. For some $m_1,\dots, m_l\in\mathbb N$, we set $m:=m_1+\cdots +m_l$. Consider the following univariate entire functions $f_1,\dots, f_m$.

If $1\le j\le m_1$, then $f_j(z)=e^{z^{n_j}}$, $z\in\mathbb C$, $n_j\in\mathbb N$, where
\[
\frac{n_{j+1}}{n_j}>2\quad {\rm for\ all}\quad 1\le j\le m_1-1.
\]

If $m_1+1\le j\le m_2$, then $f_j(z)=e^{\circ 2}(n_j z)$, $z\in\mathbb C$, $n_j>0$, where
\[
\frac{n_{j+1}}{n_j}>2e\quad {\rm for\ all}\quad m_1+1\le j\le m_2-1.
\]

If $m_{k-1}\le j\le m_{k}$ for $3\le k\le l$, then $f_j(z)=e^{\circ k}(n_j z)$, $z\in\mathbb C$,  $n_j>0$, where
\[
\frac{n_{j+1}}{n_j}>e\quad {\rm for\ all}\quad m_{k-1}+1\le j\le m_k-1.
\]
One easily checks that 
\[
m_{f_j}(r)=\ln f_j(r)\quad {\rm for\ all}\quad 1\le j\le m,
\]
and that all $f_j\in\mathscr C$ and satisfy the hypotheses of Theorem \ref{theo1.8}. Hence, for $f=(f_1,\dots, f_{m}):\mathbb C\rightarrow\mathbb C^{m}$ its graph $\Gamma_f\subset\mathbb C^{m+1}$ admits the Bernstein type inequality of exponent $\mu(k):=k^{2^{m}+\varepsilon(k)}$, $k\in\mathbb Z_+$, for some $\varepsilon :
\mathbb Z_+\rightarrow \mathbb R_+$ decreasing to zero.
}
\end{E}
\subsection{} In this section we formulate some properties of functions of class $\mathscr C$.\smallskip

The first three properties follow straightforwardly from the definition of class $\mathscr C$.
\begin{Proposition}\label{propo1.11}
\begin{itemize}
\item[(1)]
If $f\in\mathscr C$ and $g$ is an entire function such that 
\[
0<\varliminf_{r\rightarrow\infty}\frac{M_g(r)}{ M_f(r)}\le\varlimsup_{r\rightarrow\infty}\frac{M_g(r)}{ M_f(r)}<\infty,
\] 
then $g\in\mathscr C$ as well.\smallskip

\item[(2)]
If $f\in\mathscr C$, $\rho_f=\infty$,  and $g$ is an entire function such that 
\[
0<\varliminf_{r\rightarrow\infty}\frac{m_g(r)}{ m_f(r)}\le\varlimsup_{r\rightarrow\infty}\frac{m_g(r)}{ m_f(r)}<\infty,
\] 
then $g\in\mathscr C$ as well.\smallskip

\item[(3)] If $f\in\mathscr C$, then $f^n\in\mathscr C$ for all $n\in\mathbb N$.
\end{itemize}
\end{Proposition}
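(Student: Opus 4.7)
The plan is to verify each of the three assertions by translating its hypothesis into a comparison between $\phi_f$ and $\phi_g$ (resp.\ $\phi_{f^n}$) and then checking that the defining condition (I) or (II) of $\mathscr{C}$ is stable under this comparison. The argument splits cleanly along the dichotomy $\rho_f<\infty$ versus $\rho_f=\infty$, but in each regime the proof is local in $\phi$, so no global analytic properties of $f$ beyond its growth are needed.

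First I would record the translations. Taking logarithms of the two-sided bound in (1) yields $\phi_g(t)=\phi_f(t)+O(1)$; the bound in (2), after dividing by $m_f(r)$ (which is eventually positive and tends to $\infty$ since $\rho_f=\infty$), gives the weaker comparison $\ln\phi_g(t)=\ln\phi_f(t)+O(1)$; and the identity $M_{f^n}(r)=M_f(r)^n$ gives the exact scaling $\phi_{f^n}(t)=n\phi_f(t)$. In each case a short computation shows $\rho_g=\rho_f$ (resp.\ $\rho_{f^n}=\rho_f$), using $m_f(r)\to\infty$ to absorb the $O(1)$ error in the $\ln$. At the same time one checks that the perturbed or scaled function is still nonpolynomial, since the property $m_f(r)/\ln r\to\infty$ is preserved under all three operations.

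For condition (I) (the finite-order case), the key observation is that when $f$ is a nonpolynomial entire function of finite order, the increments $\phi_f(t+1)-\phi_f(t)$ tend to infinity: $M_f(r)$ dominates every polynomial in $r$, so $\phi_f(t)/t\to\infty$, and because $\phi_f$ is convex, the nondecreasing sequence of unit increments must diverge. Combined with $\phi_g=\phi_f+O(1)$ in (1), this gives $\phi_g(t+1)-\phi_g(t)\sim\phi_f(t+1)-\phi_f(t)$, so the $\varlimsup$ in (I) is identical for $f$ and $g$. In (3), the ratio defining (I) is invariant under scaling of $\phi_f$ by the constant $n$, so the condition transfers without change.

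Finally, for condition (II) (the infinite-order case), I would reuse the witness $\psi$ supplied by $f$. In (1) and (2), the relation $\ln\phi_g=\ln\phi_f+O(1)$ together with $\ln\phi_f(t)\to\infty$ (which follows from $\rho_f=\infty$) gives $\ln\psi(t)/\ln\phi_g(t)\to 1$, while the derivative condition $t^2(1/\ln\psi(t))'\to 0$ involves only $\psi$ and so is unaffected. In (3), the function $n\psi$ serves as a witness for $f^n$: the normalization requirement is immediate, and the derivative condition survives because $(\ln\psi(t)/\ln(n\psi(t)))^2\to 1$, so $t^2(1/\ln(n\psi(t)))'$ is asymptotic to $t^2(1/\ln\psi(t))'$. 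I expect the only mildly delicate point to be the divergence of the increments $\phi_f(t+1)-\phi_f(t)$ in part (1); everything else is routine bookkeeping, consistent with the authors' remark that the three statements follow straightforwardly from the definition of $\mathscr{C}$.
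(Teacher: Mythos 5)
Your proposal is correct and follows essentially the same route as the paper's proof: reduce each hypothesis to a comparison between $\phi_g$ (or $\phi_{f^n}$) and $\phi_f$ -- additive $O(1)$ in (1), multiplicative in (2), exact scaling $\phi_{f^n}=n\phi_f$ in (3) -- and then observe that condition (I) survives the additive perturbation because the unit increments $\phi_f(t+1)-\phi_f(t)$ diverge (which is the one nontrivial lemma, and is the same observation the paper invokes), while condition (II) survives because the witness $\psi$ can be reused once $\ln\phi_g/\ln\phi_f\to1$.
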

Properties (1) and (3) imply that if $f\in\mathscr C$ and $p\in\mathcal P(\mathbb C)$ is a nonconstant polynomial, then $p\circ f\in\mathscr C$.\medskip

A function $\rho\in C^1(0,\infty)$ satisfying 
conditions
\[
\lim_{r\rightarrow\infty}\rho(r)=\rho_f\quad {\rm and}\quad \lim_{r\rightarrow\infty} r\rho'(r)\ln r=0
\]
is called the  {\em proximate order} of an entire function  $f$ if
\[
\varlimsup_{r\rightarrow\infty}\frac{m_f(r)}{r^{\rho(r)}}=:\sigma_f\in (0,\infty).
\]
It is well known that an entire function of finite order has a proximate order, see, e.g. \cite[Th.\,I.16]{L}.
\begin{Proposition}\label{prop1.9}
If an entire function of finite positive order $f$ has a proximate order $\rho$ satisfying condition
\[
\varliminf_{r\rightarrow\infty}\frac{m_f(r)}{r^{\rho(r)}}>0,
\]
then $f\in\mathscr C$.
\end{Proposition}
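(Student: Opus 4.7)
The plan is to verify that $f$ satisfies condition~(I) in the definition of class $\mathscr C$. Set $V(t):=e^{t\rho(e^t)}$, so that $V(t)=r^{\rho(r)}$ with $r=e^t$. The definition of proximate order together with the hypothesis $\varliminf_{r\to\infty}m_f(r)/r^{\rho(r)}>0$ yields constants $0<a\le b<\infty$ and $t_0$ with
\[
aV(t)\le\phi_f(t)\le bV(t)\qquad\text{for all } t\ge t_0.
\]
Since $M_f$ is log-convex in $\ln r$ (Hadamard's three-circle theorem), $\phi_f$ is convex. Its right derivative $\phi_f'$ is therefore nondecreasing and satisfies $\phi_f'(t)\le\phi_f(t+1)-\phi_f(t)\le\phi_f'(t+1)$, so it will suffice to prove that $\phi_f'(t+1)/\phi_f'(t-1)$ stays bounded as $t\to\infty$.

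Next I will extract the basic asymptotics of $V$. Writing $(\ln V)'(t)=\rho(e^t)+te^t\rho'(e^t)$, the proximate-order conditions $\rho(r)\to\rho_f$ and $r\rho'(r)\ln r\to 0$ give $(\ln V)'(t)\to\rho_f$. Integrating on $[t-1,t+N]$ for any fixed $N\in\mathbb N$ yields
\[
\lim_{t\to\infty}\frac{V(t+N)}{V(t-1)}=e^{(N+1)\rho_f};
\]
in particular $V(t\pm 1)/V(t)\to e^{\pm\rho_f}$.

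For an upper bound on $\phi_f'$, convexity together with $\phi_f\le bV$ gives $\phi_f'(t)\le\phi_f(t+1)-\phi_f(t)\le bV(t+1)=O(V(t))$. The delicate step is the matching lower bound: the naive estimate $\phi_f'(t)\ge\phi_f(t)-\phi_f(t-1)\ge aV(t)-bV(t-1)$ may fail to be positive when $b/a>e^{\rho_f}$. To amplify the gain I telescope over $N+1$ unit intervals: since $\phi_f'$ is nondecreasing,
\[
\phi_f(t+N)-\phi_f(t-1)=\sum_{k=-1}^{N-1}\bigl(\phi_f(t+k+1)-\phi_f(t+k)\bigr)\le(N+1)\phi_f'(t+N),
\]
while the squeeze on $\phi_f$ gives $\phi_f(t+N)-\phi_f(t-1)\ge aV(t+N)-bV(t-1)$. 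I choose $N$ so large (depending only on $a$, $b$, $\rho_f$) that $ae^{(N+1)\rho_f}>2b$; the previous paragraph then forces $aV(t+N)\ge 2bV(t-1)$ for all sufficiently large $t$, whence $\phi_f'(t+N)\ge\frac{a}{2(N+1)}V(t+N)$. Equivalently, $\phi_f'(s)\gtrsim V(s)$ for large $s$.

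Combining both bounds, $\phi_f'(s)\asymp V(s)$, and therefore
\[
\frac{\phi_f(t+1)-\phi_f(t)}{\phi_f(t)-\phi_f(t-1)}\le\frac{\phi_f'(t+1)}{\phi_f'(t-1)}\asymp\frac{V(t+1)}{V(t-1)}\longrightarrow e^{2\rho_f},
\]
which is bounded. This is condition~(I), so $f\in\mathscr C$. The only genuine obstacle is the telescoping amplification used to pass from $\phi_f\asymp V$ to $\phi_f'\asymp V$; the rest is routine manipulation of the proximate order.
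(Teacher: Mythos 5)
Your proof is correct. It is built on the same basic insight as the paper's proof of Proposition~\ref{prop1.9} — that under the hypotheses $\phi_f(t)$ is squeezed between two multiples of $e^{t\rho(e^t)}$, and that the potentially negative lower estimate $aV(t)-bV(t-1)$ for the increment $\phi_f(t)-\phi_f(t-1)$ can be fixed by stretching to a wider interval and invoking convexity of $\phi_f$ — but it packages this insight differently. The paper works directly with $\mu_f:=\varliminf m_f(r)/r^{\rho(r)}$ and $\sigma_f:=\varlimsup m_f(r)/r^{\rho(r)}$, chooses a small $\varepsilon$ and then a specific stretching length $d$ determined by the explicit identity \eqref{e7.37}, and runs a single long chain of inequalities culminating in $\phi_f(t+1)-\phi_f(t)\le \frac{9\sigma_f e^{2\rho_f}d}{2\mu_f}\bigl(\phi_f(t)-\phi_f(t-1)\bigr)$; the convexity step is applied once, to compare the averaged increment over $[t-1-d,t-1]$ with the unit increment over $[t-1,t]$. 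You instead isolate the intermediate fact $\phi_f'(t)\asymp V(t)$, obtained by observing $(\ln V)'\to\rho_f$ and telescoping over $N+1$ unit steps with $N$ chosen so that $ae^{(N+1)\rho_f}>2b$, after which condition \eqref{cI} follows in one line from $V(t+1)/V(t-1)\to e^{2\rho_f}$. This factorisation is arguably cleaner: it avoids the explicit algebra with $d$, $\mu_f$, $\sigma_f$, and $\varepsilon$, it makes the role of convexity (via $\phi_f'$ nondecreasing) transparent, and the derived two-sided comparison $\phi_f'\asymp V$ is a reusable statement, whereas the paper's choice of $d$ is tuned to make its specific chain close. The only thing worth tidying is the last display: since $\phi_f'\asymp V$ carries hidden multiplicative constants $C_1,C_2$, the ratio $\phi_f'(t+1)/\phi_f'(t-1)$ tends to be bounded by $(C_1/C_2)e^{2\rho_f}$ rather than converging to $e^{2\rho_f}$; but a bounded limsup is all that condition \eqref{cI} requires, so the conclusion stands.
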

\begin{E}\label{ex1.10}
{\rm Let
\[
f=\sum_{j=1}^l p_j\, e^{q_j},\quad p_j,\, q_j\in\mathcal P(\mathbb C^n),\ 1\le j\le l,
\]
be the generalized exponential polynomial. (We assume that $f\not\in \mathcal P(\mathbb C^n)$.) Let
$\hat q_j$ denote the homogeneous component of degree ${\rm deg}\, q_j$ of $q_j$ (i.e. ${\rm deg}\, (q_j-\hat q_j)<\deg\, q_j$). 
Then
\[
\rho_f=\max_{1\le j\le l}{\rm deg}\, q_j\quad {\rm and }\quad \sigma_f=\lim_{r\rightarrow\infty}\frac{m_f(r)}{r^{\rho_f}}=\max_{j\,:\, {\rm deg}\, q_j=\rho_f}\left\{\sup_{\mathbb B^n}|\hat q_j|\right\}.
\]
Thus, due to Proposition \ref{prop1.9},  $f\in\mathscr C$. 

Further, if $g$ is an entire function such that $\displaystyle \varlimsup_{r\rightarrow\infty}\mbox{$\frac{m_g(r)}{r^{\rho_f}}$}<\sigma_f$, then 
\[
\varlimsup_{r\rightarrow\infty}\frac{M_{g}(r)}{ M_f(r)}<1.
\]
Hence, by Proposition \ref{propo1.11}\,(1), $f+g\in\mathscr C$ as well.
}
\end{E}

To formulate other properties consider the subclass of $\mathscr C$ of nonpolynomial entire functions satisfying condition
\begin{equation}\label{eq1.11}
\lim_{t\rightarrow\infty}\frac{\phi_f(t)}{t^2}=\infty.
\end{equation}
It is easily seen that each $f\in\mathscr C$ with $\rho_f=\infty$ satisfies \eqref{eq1.11}, see Lemma \ref{lemma7.2} below.
\begin{Proposition}\label{prop1.10}
If $f\in\mathscr C$ satisfies condition \eqref{eq1.11}, then  functions $e^f,\sin f,\cos f\in\mathscr C$.
In addition, if $f$ is univariate, then its derivative and every antiderivative are of class $\mathscr C$ and satisfy \eqref{eq1.11}.
\end{Proposition}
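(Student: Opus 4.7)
The plan is to handle each of the five operations separately using two tools: explicit pointwise estimates relating the maximum modulus of the transformed function to $M_f$, together with either Proposition \ref{propo1.11} (when the growth is preserved up to an equivalent function) or a direct verification of \eqref{cII} (when the growth is exponentially amplified).

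For $g := e^f$ I would begin from the identity $M_{e^f}(r) = e^{A_f(r)}$ with $A_f(r) := \max_{\|z\| \le r} \mathrm{Re}\, f(z)$. The Borel--Carath\'eodory inequality yields
\[
\tfrac{1}{2} M_f(r/2) - \tfrac{3}{2}|f(0)| \le A_f(r) \le M_f(r),
\]
which translates into $\phi_{e^f}(t) = e^{\phi_f(t) + O(1)}$ up to a shift $t \mapsto t - \ln 2$. Since \eqref{eq1.11} forces $m_f(r)/\ln r \to \infty$, one has $\rho_{e^f} = \infty$, so I must verify \eqref{cII}. My choice would be $\psi(t) := e^{\phi_f(t)}$, which is convex increasing as a composition of convex increasing functions and satisfies $\ln\psi = \phi_f$ and $\ln\psi/\ln\phi_{e^f} \to 1$. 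The remaining task reduces to showing
\[
t^2 \left(\frac{1}{\ln\psi(t)}\right)' = -\frac{t^2 \phi_f'(t)}{\phi_f(t)^2} \longrightarrow 0.
\]
Under case (I) of class $\mathscr C$, convexity of $\phi_f$ gives $\phi_f'(t) \le \phi_f(t+1) - \phi_f(t)$, which by \eqref{cI} together with telescoping is bounded by a constant multiple of $\phi_f(t)$; combined with \eqref{eq1.11} this yields $t^2\phi_f'/\phi_f^2 = O(t^2/\phi_f) \to 0$. Under case (II), I would use the smoothing $\psi_f$ supplied for $f$ by \eqref{cII} to recover analogous pointwise control on $\phi_f'$.

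The sine and cosine cases are treated identically, with $\mathrm{Re}$ replaced by $\mathrm{Im}$: the estimates $|\sin w|, |\cos w| \asymp e^{|\mathrm{Im}\, w|}$ for $|\mathrm{Im}\, w|$ large, combined with a Borel--Carath\'eodory bound on $\mathrm{Im}\, f$, give $\phi_{\sin f},\phi_{\cos f} \asymp e^{\phi_f}$, and the verification of \eqref{cII} proceeds as above.

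For univariate $f$, Cauchy's formula on a unit circle around $z$ gives $M_{f'}(r) \le M_f(r+1)$, while integrating $f(z) = f(0) + \int_0^z f'(\zeta)\, d\zeta$ yields $M_{f'}(r) \ge (M_f(r) - |f(0)|)/r$. Analogously $|F(z)| \le |F(0)| + |z| M_f(|z|)$ with a matching lower bound obtained by applying the derivative estimate to $F$. Under \eqref{eq1.11}, $\phi_f$ dominates $t$ and absorbs small increments of its argument, so $\phi_{f'} \sim \phi_f \sim \phi_F$; Proposition \ref{propo1.11}(1) (or (2) when $\rho_f = \infty$) then gives $f', F \in \mathscr C$, and \eqref{eq1.11} is inherited. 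The hard part will be the derivative estimate $t^2\phi_f'(t)/\phi_f(t)^2 \to 0$ in case (II), where one must carefully pass between the smoothing $\psi_f$ furnished by \eqref{cII} and pointwise bounds on $\phi_f'$.
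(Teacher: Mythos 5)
Your overall strategy (Borel--Carath\'eodory for $e^f$, Cauchy/mean-value for $f'$, split into cases (I) and (II), reduce $\sin f,\cos f$ to the exponential case) matches the paper's skeleton, and your core computation $t^2\phi_f'/\phi_f^2\to 0$ in case (I) is essentially the paper's Lemma 7.2. However, there are three concrete gaps.

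First, your choice $\psi:=e^{\phi_f}$ does \emph{not} satisfy the required normalization $\lim_{t\to\infty}\ln\psi(t)/\ln\phi_{e^f}(t)=1$. Indeed $\phi_{e^f}(t)=A_f(e^t)$, so $\ln\phi_{e^f}(t)=m_u(e^t)$ with $u=\mathrm{Re}\,f$; Borel--Carath\'eodory pins $m_u(e^t)$ only to the band $[\phi_f(t-1)-O(1),\phi_f(t)]$, hence $\ln\psi/\ln\phi_{e^f}$ converges to $1$ only when $\phi_f(t)/\phi_f(t-1)\to 1$, i.e.\ when $\rho_f=0$. Already $f(z)=e^z$ (which satisfies (I) and \eqref{eq1.11}) gives $\phi_f(t)/\phi_f(t-1)=e>1$, so your $\psi$ fails the hypothesis. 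The fix — which is what the paper does — is to take $\psi:=\phi_{e^f}$ itself, so that $\ln\psi/\ln\phi_{e^f}\equiv 1$ trivially and $\ln\psi=m_u(e^t)$; your computation then applies to $q(t):=m_u(e^t)$ rather than $\phi_f$, with the Borel--Carath\'eodory bound $q(t+1)\le c_2q(t)$ replacing your telescoping of \eqref{cI}.

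Second, the plan to settle $f'$ and $F$ via Proposition~\ref{propo1.11} does not work. Part (1) needs $M_{f'}(r)/M_f(r)$ bounded away from $0$ and $\infty$; your own bounds give only $M_{f'}(r)\ge (M_f(r)-|f(0)|)/r$ and $M_{f'}(r)\le M_f(r+1)$, so this ratio can tend to $0$ (for $0<\rho_f<1$) or to $\infty$ (for $\rho_f>1$). Part (2) needs $m_{f'}(r)/m_f(r)$ bounded, which can likewise fail when $\rho_f=\infty$ since $m_f(er)/m_f(r)$ need not be bounded. The paper avoids Proposition~\ref{propo1.11} here and instead verifies \eqref{cI} (and, separately, \eqref{cII}) for $f'$ directly by combining the Cauchy and mean-value estimates with convexity of $\phi_f$ (see the chain of inequalities around \eqref{eq7.62}--\eqref{eq7.65}).

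Third, everything you write about case (II) is an acknowledged placeholder. This is not a small gap: the paper's treatment of the infinite-order case occupies the bulk of the proof, introducing the auxiliary function $s(t)$ defined by $\psi(t)/\psi(t-2s(t))=e$, the bound $1/\tilde s(t)\le(\phi_f(t))^{\kappa(t)\varepsilon(t)/t}$, a careful interpolation between $v$ and $v_t>v$ with $v=v_t-2\tilde s(v_t)$, and the comparison $\ln\phi_f/\ln\phi_{f'}\to 1$ deduced from \eqref{eq7.68}--\eqref{eq7.69}. None of this is recoverable by ``pointwise control on $\phi_f'$'' alone; the whole point of condition (II) is that $\phi_f$ can grow so erratically that $\phi_f'$ is not controlled pointwise by $\phi_f$, and the smoothing by $\psi$ and the auxiliary scale $s(t)$ are what substitute for the missing regularity.
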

\begin{R}
{\rm It is worth noting that if $f$ is a nonpolynomial entire function, then functions $e^f, \sin f, \cos f$ are of infinite order. Thus under the hypothesis of the theorem they satisfy condition \eqref{cII}.
}
\end{R}
\begin{E}\label{ex1.12}
{\rm Let $f$ and $g$ be as in Example \ref{ex1.10}. Then $\phi_{f+g}$ is equivalent to $\phi_f$. Moreover, $\rho_f\ge 1$ and 
\[
\sigma_f=\lim_{t\rightarrow\infty}\frac{\phi_f(t)}{e^{\rho_f t}}\in (0,\infty).
\]
Hence, $f+g$ satisfies condition \eqref{eq1.11}. In particular, due to Proposition \ref{prop1.10}, functions $e^{f+g}, \sin(f+g), \cos(f+g)\in\mathscr C$.
}
\end{E}

To present more explicit examples of entire functions of class $\mathscr C$, we describe a subclass of univariate functions in $\mathscr C$ satisfying condition \eqref{eq1.11} in terms of the coefficients of their Taylor expansions at $0\in\mathbb C$.

Let $h: \mathbb R_+\rightarrow \mathbb R$ be a continuous increasing function satisfying conditions
\begin{equation}\label{eq1.12}
\lim_{t\rightarrow\infty}\frac{h(t)}{t}=\infty\quad {\rm and}\quad \varlimsup_{t\rightarrow\infty}\frac{h(t+1)}{h(t)}<\infty.
\end{equation}
\begin{Th}\label{theo1.11}
Let 
\[
f(z):=\sum_{j=0}^\infty c_j z^j,\quad z\in\mathbb C,
\]
be a holomorphic function in a neighbourhood of $0\in\mathbb C$ such that
\[
\ln |c_j|=-\int_0^{j} h^{-1}(s)\, ds,\quad j\in\mathbb Z_+.
\]
Then $f$ is an entire function of finite order of class $\mathscr C$ satisfying condition \eqref{eq1.11}. Moreover,
\[
\rho_f=\varlimsup_{t\rightarrow\infty}\frac{\ln h(t)}{t}.
\]
\end{Th}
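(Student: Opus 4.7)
My plan is to establish the asymptotic formula
\begin{equation*}
\phi_f(t) \;=\; H(t) + O(t), \qquad \text{where}\ H(t) := \int_0^t h(u)\,du,
\end{equation*}
and then deduce each of the four claims from it. Writing $a_j := -\ln|c_j| = \int_0^j h^{-1}(s)\,ds$, I note that $\{a_j\}$ is convex (since $h^{-1}$ is increasing) and that $a_j/j \to \infty$ (since $h(t)/t \to \infty$ forces $h^{-1}(s) \to \infty$), whence $f$ is entire. The bounded-ratio condition in \eqref{eq1.12} also forces at most exponential growth of $h$, so $\varlimsup_t \ln h(t)/t < \infty$; hence $\rho_f$ will be finite and only case \eqref{cI} in the definition of $\mathscr C$ needs to be verified.

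By convexity of $\{a_j\}$, the central index $\nu(r)$ realising the maximum term $\mu_f(r) := \max_j|c_j|r^j$ is characterised by $a_\nu - a_{\nu-1} \le \ln r < a_{\nu+1} - a_\nu$, and the sandwich $h^{-1}(j) \le \int_j^{j+1} h^{-1}(s)\,ds \le h^{-1}(j+1)$ yields $|\nu(r) - h(\ln r)| \le 1$. Combining this with the Young-type identity $\int_0^{h(T)} h^{-1}(s)\,ds + H(T) = T h(T)$ (obtained by the substitution $s = h(u)$ and integration by parts), evaluated at $T := \ln r$, gives
\begin{equation*}
\ln\mu_f(r) \;=\; \nu(r)\,T - a_{\nu(r)} \;=\; H(T) + O(T),
\end{equation*}
where the $O(T)$ absorbs the replacement of $h(T)$ by the integer $\nu(r)$ and the associated excursion $\int_{h(T)}^{\nu(r)} h^{-1}(s)\,ds$, controlled via $h^{-1}(h(T)+1) - T = O(1)$ (itself a consequence of $h(t+1) \le M h(t)$). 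For the upper bound on $m_f(r)$ I split the series $\sum_j |c_j|r^j$ at $j_0 := 2 h(eT)$: for $j \le j_0$ each term is at most $\mu_f(r)$, contributing $O(h(eT))\mu_f(r)$; for $j > j_0$ the estimate $a_j \ge (j/2) h^{-1}(j/2) \ge (ej/2)T$ gives $|c_j|r^j \le e^{-(e/2-1)jT}$ with negligible tail. Together with the trivial bound $m_f(r) \ge \ln\mu_f(r)$, this produces $\phi_f(T) = H(T) + O(T)$, an error that is genuinely lower order since $H(T) \ge (T/2) h(T/2) \gg T^2$ by $h(t)/t \to \infty$.

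The four assertions follow quickly. Condition \eqref{eq1.11} is immediate from $\phi_f(T)/T^2 \ge h(T/2)/(2T) - O(1/T) \to \infty$. For the order, $\ln\phi_f(T) = \ln H(T) + o(1)$, and the squeeze $h(T-1) \le H(T) \le T h(T)$ together with the bounded ratio of $h$ gives $\varlimsup_T \ln H(T)/T = \varlimsup_T \ln h(T)/T$, so $\rho_f = \varlimsup_t \ln h(t)/t$. For \eqref{cI}, convexity of $\phi_f$ in $T$ yields the upper bound $\phi_f(T+1) - \phi_f(T) \le \tfrac12(\phi_f(T+2) - \phi_f(T)) \le C h(T) + O(T)$, while the pointwise formula for $\phi_f$ yields the lower bound $\phi_f(T+1) - \phi_f(T) \ge H(T+1) - H(T) - O(T) \ge h(T) - O(T)$; since $h(T)/T \to \infty$, both bounds give $\phi_f(T+1) - \phi_f(T) \asymp h(T)$, and the ratio in \eqref{cI} is dominated by $\varlimsup h(T+1)/h(T-1) < \infty$. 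The main obstacle I anticipate is precisely this last step: a priori the $O(T)$ error in $\phi_f(T)$ need not cancel between consecutive integer arguments, and it is the growth assumption $h(t)/t \to \infty$ that rescues the argument by making the error asymptotically dominated by the main term $h(T)$ in each difference.
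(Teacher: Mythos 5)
Your proposal is correct and follows essentially the same route as the paper: both reduce $\phi_f$ to the maximum-term function $\nu_f(t)=\sup_j(\ln|c_j|+jt)$, identify it with $\int^t h(s)\,ds$ up to an $O(t)$ error via the Young/Legendre duality, and then read off \eqref{eq1.11}, \eqref{cI} and the order formula from the two-sided estimate $\phi_f(t)=\int^t h+O(t)$, using $h(t)/t\to\infty$ to make the $O(t)$ error harmless in the ratio of consecutive differences. The only variations are presentational: you re-derive the maximum-term/maximum-modulus comparison by a tail split where the paper cites \cite[Ch.\,I.2]{L}, and you parametrize via the central index $\nu(r)$ rather than via $\lfloor h(t)\rfloor$ -- both work, and your bound $h^{-1}(h(T)+1)\le T+1$ (from the ratio condition \eqref{eq1.12}) correctly controls the resulting excursion integral.
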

By $f_h$ we denote an entire function constructed by means of a function $h$  satisfying conditions \eqref{eq1.12} as in the above result. 
\begin{R}\label{rem1.18}
{\rm In section 8.1 we show that for each $c>\rho_{f_h}$ there exists a number $t_c$ such that for all $t\ge t_c$
\[
\int_{h^{-1}(0)}^t h(s)\,ds-t\le \phi_{f_h}(t)\le ct+\int_{h^{-1}(0)}^t h(s)\,ds,
\]
see \cite[Ch.\,I.2]{L}, \eqref{equ8.45},  \eqref{eq8.43}.}
\end{R}
The following result allows to construct new functions of class $\mathscr C$ by means of functions of the form $f_h$.
In its formulation, by $\omega_t(\cdot\, ;g)$, $t\ge 0$, we denote the modulus of continuity of a function $g\in C(a,\infty)$, $a\le 0$, restricted to the interval $[0,t]$.
\begin{Proposition}\label{prop1.15}
Let $h_1,h_2:\mathbb R_+\rightarrow\mathbb R$ be continuous increasing functions satisfying conditions  \eqref{eq1.12} such that
\[
\varliminf_{t\rightarrow\infty}\bigl(h_1(t)-h_2(t)\bigr)>\varlimsup_{t\rightarrow\infty}\frac{\ln h_1(t)}{t}+\varlimsup_{t\rightarrow\infty}\frac{\omega_t(1;h_1^{-1})}{h_1^{-1}(t)}.
\]
Then all entire functions of the form $c_1f_{h_1}+c_2f_{h_2}$, $(c_1,c_2)\in\mathbb C^2\setminus\{(0,0)\}$, are of class $\mathscr C$.
\end{Proposition}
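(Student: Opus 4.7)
The plan is to verify $f:=c_1 f_{h_1}+c_2 f_{h_2}\in\mathscr C$ by reducing to Proposition \ref{propo1.11}\,(1): I will show that $M_f$ is comparable to $M_{f_{h_1}}$, and since $f_{h_1}\in\mathscr C$ by Theorem \ref{theo1.11}, this will give $f\in\mathscr C$. The case $c_1=0$ is immediate, because $M_{c_2 f_{h_2}}(r)=|c_2|M_{f_{h_2}}(r)$ and $f_{h_2}\in\mathscr C$. Thus assume henceforth $c_1\ne 0$.

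The core estimate will be
\[
M_{f_{h_2}}(r)=o\bigl(M_{f_{h_1}}(r)\bigr)\quad \text{as }r\to\infty.
\]
Given it, the triangle inequality supplies $M_f(r)\le(|c_1|+o(1))M_{f_{h_1}}(r)$. For the reverse inequality, I select the canonical representative of $f_{h_i}$ with nonnegative real Taylor coefficients so that $M_{f_{h_i}}(r)=f_{h_i}(r)$ for $r>0$; then
\[
M_f(r)\ge |f(r)|\ge |c_1|M_{f_{h_1}}(r)-|c_2|M_{f_{h_2}}(r)\ge \tfrac{|c_1|}{2}M_{f_{h_1}}(r)
\]
for $r$ sufficiently large. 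Proposition \ref{propo1.11}\,(1) then closes the argument.

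It remains to prove the core estimate, equivalent to $\phi_{f_{h_1}}(t)-\phi_{f_{h_2}}(t)\to\infty$. The upper bound of Remark \ref{rem1.18} gives $\phi_{f_{h_2}}(t)\le\int_{h_2^{-1}(0)}^t h_2(s)\,ds+(\rho_{f_{h_2}}+\epsilon)t$ for any $\epsilon>0$ and large $t$. The lower bound there ($\int h_1-t$) is too weak, and I will sharpen it directly: since $M_{f_{h_1}}(e^t)\ge\max_j|c_j^{(1)}|e^{jt}$, and the continuous maximum of $-\int_0^j h_1^{-1}(s)\,ds+jt$ over real $j$ is attained at $j^\ast=h_1(t)$ with value $\int_{h_1^{-1}(0)}^t h_1(s)\,ds$, comparing this to the integer maximum at $\lfloor j^\ast\rfloor$ shows the loss is at most the unit-step increment $\omega_{h_1(t)+1}(1;h_1^{-1})$ of $h_1^{-1}$. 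Converting this into a bound linear in $t$ via the hypothesized $\varlimsup$ and the inversion relation $h_1^{-1}(h_1(t))=t$, and combining with $\rho_{f_{h_2}}\le\rho_{f_{h_1}}$ (a consequence of $h_2\le h_1$ eventually and Theorem \ref{theo1.11}'s formula $\rho_{f_{h_i}}=\varlimsup\ln h_i(t)/t$), one obtains
\[
\phi_{f_{h_1}}(t)-\phi_{f_{h_2}}(t)\ge \int_0^t(h_1-h_2)(s)\,ds-\Bigl(\rho_{f_{h_1}}+\varlimsup_{s\to\infty}\tfrac{\omega_s(1;h_1^{-1})}{h_1^{-1}(s)}+o(1)\Bigr)t+O(1).
\]
The hypothesis supplies some $L_0>\rho_{f_{h_1}}+\varlimsup\omega_s/h_1^{-1}(s)$ with $h_1-h_2\ge L_0$ eventually, whence $\int_0^t(h_1-h_2)\ge L_0 t+O(1)$ and the right-hand side tends to $+\infty$.

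The main technical obstacle is the quantitative refinement of Remark \ref{rem1.18}'s lower bound on $\phi_{f_{h_1}}$ from $\int h_1-t$ to one governed by $\omega_s(1;h_1^{-1})/h_1^{-1}(s)$; this is exactly the quantity appearing in the hypothesis, and extracting the needed $t$-linear bound for $\omega_{h_1(t)+1}(1;h_1^{-1})$ requires the inversion relation together with asymptotic control of the unit-step increment of $h_1^{-1}$. Everything else reduces to the bookkeeping indicated above.
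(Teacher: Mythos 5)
Your approach is essentially the paper's: reduce to Proposition \ref{propo1.11}\,(1) by showing $M_{f_{h_2}}(r)=o(M_{f_{h_1}}(r))$ (equivalently, $\phi_{f_{h_1}}-\phi_{f_{h_2}}\to\infty$), derive this from the sharpened estimate $\int_{h_1^{-1}(0)}^t h_1 - \omega_{h_1(t)}(1;h_1^{-1}) \le \phi_{f_{h_1}}(t) \le Ct + \int_{h_1^{-1}(0)}^t h_1$ for $C>\rho_{f_{h_1}}$, and then apply the reverse triangle inequality for sup norms. Two small remarks. First, the ``quantitative refinement of Remark \ref{rem1.18}'' that you flag as the main obstacle is already in the paper: it is precisely \eqref{eq8.43} (restated as \eqref{eq8.48} in the proof), so no new analysis is needed there, and the natural index is $\omega_{h_1(t)}$ (not $\omega_{h_1(t)+1}$), because the inversion $h_1^{-1}(h_1(t))=t$ then gives $\omega_{h_1(t)}(1;h_1^{-1})\le(\omega_{h_1}+o(1))t$ directly. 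Second, you should not select a ``canonical representative'' of $f_{h_1}$ with nonnegative coefficients for the lower bound: the statement (cf.\ Example \ref{ex1.15}) is for \emph{every} choice of Taylor coefficients with the prescribed moduli, and your argument as written covers only the canonical one. The fix is immediate -- simply use $M_{g_1+g_2}(r)\ge M_{g_1}(r)-M_{g_2}(r)$, which is what the paper does and holds for all representatives since the core estimate depends only on the moduli $|c_j^{(i)}|$.
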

Since $h^{-1}$ is an increasing function, the second term on the right-hand side of the above inequality is bounded from above by one.
\begin{E}\label{ex1.15}
{\rm (a) Let $h(t)=\bigl(\frac{t}{\alpha}\bigr)^{\frac{1}{\alpha-1}}$, $\alpha\in (1,2)$. Then, due to Theorem \ref{theo1.11}, $f_h(z):=\sum_{j=0}^\infty c_j z^j$, $ z\in\mathbb C$, where
\[
\ln |c_j|=-\int_0^j \alpha s^{\alpha-1}\,ds=-j^\alpha,\quad j\in\mathbb Z_+,
\]
is a nonpolynomial entire function of order zero of class $\mathscr C$.

Observe that
\[
\omega_h:=\varlimsup_{t\rightarrow\infty}\frac{\omega_t(1;h^{-1})}{h^{-1}(t)}\le\varlimsup_{t\rightarrow\infty}\frac{\sup_{s\in [0,t]}\bigl(\alpha s^{\alpha-1}\bigr)' }{\alpha t^{\alpha-1}}=\varlimsup_{t\rightarrow\infty}\frac{\alpha-1}{t}=0.
\]
Thus by Proposition \ref{prop1.15} for each $\tilde h\in C(\mathbb R_+)$ satisfying conditions \eqref{eq1.12} and
\[
\varliminf_{t\rightarrow\infty}(h(t)-\tilde h(t))>0,
\]
all functions of the form $c_1f_{h_1}+c_2f_{h_2}$, $(c_1,c_2)\in\mathbb C^2\setminus\{(0,0)\}$, are of class $\mathscr C$. In particular, this is true for
$\tilde h=h-c$, $c>0$. In this case $f_{\tilde h}(z)=\sum_{j=0}^\infty \tilde c_j z^j$, $z\in\mathbb C$, is such that
\[
\ln |\tilde c_j|=-(j+c)^\alpha+ c^\alpha,\quad j\in\mathbb Z_+.
\]
Thus, all entire functions $f$ of the form
\[
f(z)=\sum_{j=0}^\infty \bigl( c_1e^{-j^\alpha +\sqrt{-1}\,\theta_j}+c_2 e^{-(j+c)^\alpha+\sqrt{-1}\,\tilde\theta_j}\bigr)\,z^j,\, \ \theta_j,\tilde\theta_j\in\mathbb R,\  j\in\mathbb Z_+,\  (c_1,c_2)\in\mathbb C^2\setminus\{(0,0)\},
\]
are of class $\mathscr C$.\smallskip

\noindent (b) Let $h(t)=e^{\alpha t-1}-1$, $\alpha>0$. Then Theorem \ref{theo1.11} implies that $f_h(z):=\sum_{j=0}^\infty c_j z^j$, $ z\in\mathbb C$, where
\[
\ln |c_j|=-\int_0^j \frac{\ln(s+1)+1}{\alpha}\,ds=-\frac{(j+1)\ln(j+1)}{\alpha},\quad j\in\mathbb Z_+,
\]
is a nonpolynomial entire function of order $\alpha$ of class $\mathscr C$.

Next, in this case
\[
\omega_h:=\varlimsup_{t\rightarrow\infty}\frac{\omega_t(1;h^{-1})}{h^{-1}(t)}\le\varlimsup_{t\rightarrow\infty}\frac{\sup_{s\in [0,t]}\bigl(\ln(s+1) \bigr)' }{\ln(t+1)}=0.
\]
Thus by Proposition \ref{prop1.15} for each $\tilde h\in C(\mathbb R_+)$ satisfying conditions \eqref{eq1.12} and
\[
\varliminf_{t\rightarrow\infty}(h(t)-\tilde h(t))>\alpha,
\]
all functions of the form $c_1f_{h_1}+c_2f_{h_2}$, $(c_1,c_2)\in\mathbb C^2\setminus\{(0,0)\}$, are of class $\mathscr C$. For instance, this is true for
$\tilde h=h-c$, $c>\alpha$. Then $f_{\tilde h}(z)=\sum_{j=0}^\infty \tilde c_j z^j$, $z\in\mathbb C$, is such that
\[
\ln |\tilde c_j|=-\frac{(j+1+c)\ln(j+1+c)}{\alpha}+\frac{(c+1)\ln(c+1)}{\alpha},\quad j\in\mathbb Z_+.
\]
Thus, all entire functions $f$ of the form
\[
f(z)=\sum_{j=0}^\infty \left(\frac{c_1e^{\sqrt{-1}\,\theta_j}}{(j+1)^{\frac{j+1}{\alpha}}}+\frac{c_2e^{\sqrt{-1}\,\tilde\theta_j}}{(j+1+c)^{\frac{j+1+c}{\alpha}}}\right)z^j,\, \ \theta_j,\tilde\theta_j\in\mathbb R,\  j\in\mathbb Z_+,\  (c_1,c_2)\in\mathbb C^2\setminus\{(0,0)\},
\]
are of class $\mathscr C$.
}
\end{E}
Now, as the corollary of Theorems \ref{theo1.11} and \ref{theo1.8} we obtain:
\begin{C}\label{cor1.15}
Suppose $h_1,\dots, h_m\in C(\mathbb R_+)$  are increasing functions satisfying condition \eqref{eq1.12}  such that for some integer $1\le l\le m$
\begin{equation}\label{eq1.13}
\lim_{t\rightarrow\infty}\frac{h_{j}(t)}{\sqrt{h_{j+1}(t)}}=0\quad {\rm for\ all}\quad  1\le j\le l-1
\end{equation}
and
\begin{equation}\label{eq1.14}
\varlimsup_{t\rightarrow\infty}\frac{\int_{M}^{t} h_{j}(s)\,ds}{\int_{M}^{t-1} h_{j+1}(s)\,ds}<\frac 12 \quad {\rm for\ all}\quad  l+1\le j\le m-1,
\end{equation}
where 
\[
M:=\max_{l+1\le j\le m}\{h_j^{-1}(0)\}.
\]
Then entire functions $f_{h_1},\dots, f_{h_l}, e^{f_{h_{l+1}}},\dots, e^{f_{h_m}}$ satisfy the hypotheses of Theorem \ref{theo1.8}. Therefore for $f=(f_{h_1},\dots, f_{h_l}, e^{f_{h_{l+1}}},\dots, e^{f_{h_m}}):\mathbb C\rightarrow\mathbb C^{m}$ its graph $\Gamma_f\subset\mathbb C^{m+1}$ admits the Bernstein type inequality of exponent $\mu(k):=k^{2^{m}+\varepsilon(k)}$, $k\in\mathbb Z_+$, for some $\varepsilon :
\mathbb Z_+\rightarrow \mathbb R_+$ decreasing to zero. Here $\varepsilon= 0$ if $l=m$.
\end{C}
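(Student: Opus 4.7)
The plan is to deduce Corollary \ref{cor1.15} directly from Theorem \ref{theo1.8}, applied to the $m$-tuple $(g_1,\dots,g_m):=(f_{h_1},\dots,f_{h_l},e^{f_{h_{l+1}}},\dots,e^{f_{h_m}})$. I would need to verify (i) each $g_j\in\mathscr{C}$, (ii) $\rho_{g_1}\le\cdots\le\rho_{g_m}$, (iii) condition \eqref{eq1.4} for each $j$ with $\rho_{g_{j+1}}<\infty$, and (iv) condition \eqref{eq1.5} for each $j$ with $\rho_{g_j}=\infty$. The statement $\varepsilon=0$ when $l=m$ follows immediately from the corresponding assertion in Theorem \ref{theo1.8}, since in that case every $g_j$ is of finite order.

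For (i) and (ii): Theorem \ref{theo1.11} yields $f_{h_j}\in\mathscr{C}$ of finite order $\rho_{f_{h_j}}=\varlimsup_{t\to\infty}\ln h_j(t)/t$ satisfying \eqref{eq1.11}, and Proposition \ref{prop1.10} then gives $e^{f_{h_j}}\in\mathscr{C}$ for $j\ge l+1$ (these automatically of infinite order, as recorded in the remark after Proposition \ref{prop1.10}). The chain $\rho_{g_1}\le\cdots\le\rho_{g_l}$ follows from \eqref{eq1.13}, which forces $\ln h_j(t)\le\tfrac{1}{2}\ln h_{j+1}(t)$ for $t$ large, hence $\rho_{f_{h_j}}\le\tfrac{1}{2}\rho_{f_{h_{j+1}}}$; the rest of the chain is trivial.

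For (iii), applicable only when $1\le j\le l-1$, I would invoke Remark \ref{rem1.18} in the form $\phi_{f_h}(t)=\int_{h^{-1}(0)}^{t}h(s)\,ds+O(t)$, noting that the $O(t)$ error is dominated by the integral because \eqref{eq1.12} forces $h(t)/t\to\infty$. Combined with the one-step ratio bound $h_j(t+1)/h_j(t)=O(1)$ from \eqref{eq1.12}, this yields $m_{f_{h_j}}(r)-m_{f_{h_j}}(r/e)\asymp h_j(\ln r)$ as $r\to\infty$, and then condition \eqref{eq1.13} delivers \eqref{eq1.4}.

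The main obstacle is (iv). Here $m_{g_j}(r)=A_{f_{h_j}}(r):=\max_{|z|\le r}\mathrm{Re}\,f_{h_j}(z)$, so the Borel--Carath\'eodory inequality gives, for every $\epsilon>0$,
\[
\ln m_{g_{j+1}}(r/e)\ge m_{f_{h_{j+1}}}\bigl(r/(e(1+\epsilon))\bigr)-O_\epsilon(1),
\]
while $\ln m_{g_j}(r)\le m_{f_{h_j}}(r)$ is trivial. Substituting $t=\ln r$ and using the asymptotic $\phi_{f_h}(t)\sim\int_{M}^{t}h(s)\,ds$, the ratio in \eqref{eq1.5} is asymptotically bounded by $\int_{M}^{t}h_j(s)\,ds\big/\int_{M}^{t-1-\ln(1+\epsilon)}h_{j+1}(s)\,ds$. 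The difficulty is that \eqref{eq1.14} provides the shift $t-1$ rather than $t-1-\ln(1+\epsilon)$. I would resolve this by exploiting finiteness of $\rho_{f_{h_{j+1}}}$, which bounds $\int_{M}^{t-1}h_{j+1}\big/\int_{M}^{t-1-\ln(1+\epsilon)}h_{j+1}$ asymptotically by $(1+\epsilon)^{\rho_{f_{h_{j+1}}}+o(1)}$, a factor tending to $1$ as $\epsilon\to 0$. Choosing $\epsilon$ small enough that this factor times the strict bound $<\tfrac12$ furnished by \eqref{eq1.14} remains below $\tfrac12$ then completes the verification of \eqref{eq1.5}, and an appeal to Theorem \ref{theo1.8} finishes the proof.
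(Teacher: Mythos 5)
Your high-level plan (reduce to Theorem \ref{theo1.8}, verify class $\mathscr C$, the order chain, then \eqref{eq1.4} on the range $1\le j\le l-1$ and \eqref{eq1.5} on $l+1\le j\le m-1$) matches the paper's. Items (i), (ii) and (iii) coincide with the paper's treatment: the paper also derives $m_{f_{h_j}}(r)-m_{f_{h_j}}(r/e)\asymp h_j(\ln r)$ from \eqref{equ8.45}--\eqref{eq8.43}, which reduces \eqref{eq1.4} to \eqref{eq1.13}. The genuine divergence is in step (iv). The paper applies Borel--Carath\'eodory with a \emph{$t$-dependent} parameter $s=1-e^{-t}$, so the resulting shift in the argument of $\phi_{f_{h_{j+1}}}$ is $\ln(1-e^{-t})\to 0$ and is absorbed via Lemma \ref{lem8.1}, while the additive penalty $-\ln(1-s)=t$ is negligible because $\phi_{f_h}(t)/t^2\to\infty$. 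You instead fix $s=1/(1+\epsilon)$, so the penalty is an additive constant (harmless) but the shift $\ln(1+\epsilon)$ is a fixed positive number, and you remove it by letting $\epsilon\to 0$ after taking $\varlimsup_t$, exploiting the strict inequality $<\tfrac12$ in \eqref{eq1.14}. Both routes are valid; yours avoids Lemma \ref{lem8.1} at the cost of an extra $\epsilon$-limit, and the paper's avoids the $\epsilon$-limit at the cost of the lemma.

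One inaccuracy worth flagging: you justify the absorption of the shift by ``finiteness of $\rho_{f_{h_{j+1}}}$'' and claim the ratio $\int_M^{t-1}h_{j+1}\big/\int_M^{t-1-\ln(1+\epsilon)}h_{j+1}$ is asymptotically $(1+\epsilon)^{\rho_{f_{h_{j+1}}}+o(1)}$. This is not quite right: finiteness of $\rho$ alone does not control that ratio, and the asserted exponent is incorrect in general (take, e.g., a step-like $h$ of order zero). The correct mechanism is the second half of \eqref{eq1.12}, $B:=\varlimsup h_{j+1}(t+1)/h_{j+1}(t)<\infty$, which gives (with $\delta:=\ln(1+\epsilon)\le1$)
\[
\varlimsup_{t\to\infty}\frac{\int_M^{t-1}h_{j+1}}{\int_M^{t-1-\delta}h_{j+1}}
\le 1+\varlimsup_{t\to\infty}\frac{\delta\,h_{j+1}(t-1)}{h_{j+1}(t-3)}\le 1+\delta B^{2},
\]
which tends to $1$ as $\epsilon\to 0$. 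Since this is precisely what your argument needs (the exact form of the constant is irrelevant), the proof still goes through; only the stated justification and the bound's form should be replaced by the one above, which depends on $B$ from \eqref{eq1.12} rather than on $\rho_{f_{h_{j+1}}}$.
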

\begin{R}\label{rem1.22}
{\rm Note that condition
\[
\varlimsup_{t\rightarrow\infty}\frac{h_{j}(t+1)}{h_{j+1}(t)}<\frac 12 \quad {\rm for\ all}\quad  l+1\le j\le m-1
\]
implies \eqref{eq1.14}.
}
\end{R}
\begin{E}\label{eq1.19}
{\rm Let $l_1<l<m_1<m$ be some natural numbers.
Suppose that $h_j(t)=\left(\frac{t}{\alpha_j}\right)^{\frac{1}{\alpha_j-1}}$, $\alpha_j\in (1,2)$, for $1\le j\le m_1$ and $m_2+1\le j\le m_3$, and $h_j(t)=e^{\alpha_j t-1}-1$, $\alpha_j>0$, for $m_1+1\le j\le l\le m_2$ and $m_3+1\le j\le m_4$, where
\[
\begin{array}{l}
{\rm (i)}\quad 2\alpha_{j+1}<\alpha_j+1\quad {\rm for\ all}\quad 1\le j\le l_1;\quad \ {\rm (ii)}\quad
\alpha_{j}<2\alpha_{j+1}\quad {\rm for\ all}\quad l_1+1\le j\le l;\medskip\\
{\rm (iii)}\quad \alpha_{j+1}<\alpha_j\quad {\rm for\ all}\quad l+1\le j\le m_1; \quad {\rm (iv)}\quad
\alpha_{j}<\alpha_{j+1}\quad {\rm for\ all}\quad m_1+1\le j\le m.
\end{array}
\]
Then entire functions $f_{h_1},\dots, f_{h_{l}}, e^{f_{h_{l+1}}},\dots, e^{f_{h_{m}}}$ satisfy assumptions of Corollary \ref{cor1.15}.
}
\end{E}

\section{Proof of Theorem \ref{theo1}}
For basic facts of complex algebraic geometry, see, e.g., the book \cite{M}.

(a) Without loss of generality we may assume that $\Gamma_f$ is 
nonalgebraic. Let $\mathcal I_f\subset\mathcal P(\mathbb C^{n+m})$ be the ideal of holomorphic polynomials vanishing on $\Gamma_f$ and $\mathcal Z_f\subset\mathbb C^{n+m}$ be the set of zeros of $\mathcal I_f$. Let $X_f$ be the irreducible component of $\mathcal Z_f$ containing $\Gamma_f$.
Then $X_f$ is the complex algebraic subvariety of $\mathbb C^{n+m}$ of pure dimension $l\ge n+1$. Let  $U\Subset X_f$ be a relatively compact open subset such that 
\[
U\cap\Gamma_f=\{(z, f(z))\in\Gamma_f\, :\, z\in\mathbb B^n\}.
\]
Since $U$ is a nonpluripolar subset of  $\mathcal Z_f$, \cite[Th.~2.2]{S} implies that for each $r\ge 1$, there exists a constant $A(r)$ such that for all $p\in\mathcal P(\mathbb C^{n+m})$,
\begin{equation}\label{eq2.6}
\sup_{\mathbb B^n_{re}} |p_f|\le A(r)^{{\rm deg}\,p}\,\sup_{U}|p|,\quad {\rm where}\quad  p_f:=p(\cdot,f(\cdot)).
\end{equation}

Next, we prove
\begin{Lm}\label{lem2.1}
For each $k\in\mathbb Z_+$ there exists a positive constant $c(k)$ such that for all $p\in\mathcal P(\mathbb C^{n+m})$ with ${\rm deg}\,p\le k$,
\[
\sup_U |p|\le c(k)\sup_{U\cap\Gamma_f}|p|.
\]
\end{Lm}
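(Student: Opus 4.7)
The plan is to reduce the lemma to the equivalence of norms on a finite-dimensional vector space. Consider the finite-dimensional quotient
\[
V_k := \mathcal{P}_k(\mathbb C^{n+m}) / \bigl(\mathcal I_f \cap \mathcal P_k(\mathbb C^{n+m})\bigr),
\]
where $\mathcal P_k$ denotes polynomials of degree at most $k$. Since $\mathcal I_f$ is the ideal of polynomials vanishing on $\Gamma_f$, every element of $V_k$ is well-defined as a continuous function on $X_f$ (because $X_f\subset \mathcal Z_f$, any $p\in\mathcal I_f$ vanishes on $X_f$). Thus we can regard $V_k$ as a subspace of $C(\bar U)$.

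On $V_k$ I would introduce the two seminorms
\[
\|q\|_1 := \sup_U |q|, \qquad \|q\|_2 := \sup_{U\cap\Gamma_f} |q|.
\]
Both are finite because $\bar U\subset X_f$ is compact. The lemma is precisely the inequality $\|\cdot\|_1\le c(k)\|\cdot\|_2$ on $V_k$, so by the equivalence of norms in finite dimensions it suffices to check that both are \emph{norms}, i.e., that each vanishes only on $0\in V_k$.

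For $\|\cdot\|_1$: if $q\in V_k$ is represented by $p$ and $p\equiv 0$ on $U$, then $p$ vanishes on a nonempty open subset of the irreducible algebraic variety $X_f$. Since the regular locus $X_f^{\mathrm{reg}}$ is a connected complex manifold and $U\cap X_f^{\mathrm{reg}}$ is a nonempty open subset of it, the identity principle for holomorphic functions on a connected complex manifold forces $p\equiv 0$ on $X_f^{\mathrm{reg}}$, hence on $\overline{X_f^{\mathrm{reg}}}=X_f$; so $q=0$ in $V_k$. For $\|\cdot\|_2$: if $p\equiv 0$ on $U\cap\Gamma_f=\{(z,f(z)):z\in\mathbb B^n\}$, then the entire function $p_f(z)=p(z,f(z))$ vanishes on $\mathbb B^n$, hence on all of $\mathbb C^n$ by the identity theorem, so $p\in\mathcal I_f$ and $q=0$ in $V_k$.

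The only step requiring care is the first norm verification, since $X_f$ may be singular; the argument above handles this by passing to the regular locus and using connectedness of an irreducible algebraic variety. Once both seminorms are known to be norms on the finite-dimensional $V_k$, their equivalence yields the constant $c(k)$, completing the proof.
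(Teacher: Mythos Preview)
Your proof is correct, and it takes a genuinely different route from the paper's argument.

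The paper argues by contradiction: assuming the inequality fails for some $k_0$, it extracts a sequence $\{p_i\}\subset\mathcal P_{k_0}(\mathbb C^{n+m})$ with $\sup_U|p_i|=1$ and $\sup_{U\cap\Gamma_f}|p_i|\to 0$, applies Sadullaev's theorem to get uniform bounds on compact subsets of $X_f$, and uses Montel's theorem to produce a limit $g\in C(X_f)$ with $\sup_U|g|=1$ and $g|_{\Gamma_f}=0$. The paper then invokes that $g$ is a regular function on the affine variety $X_f$, hence satisfies an integral equation $g^s+q_1 g^{s-1}+\cdots+q_s=0$ with $q_s|_{X_f}\not\equiv 0$; since $g|_{\Gamma_f}=0$ forces $q_s|_{\Gamma_f}=0$, one gets $q_s|_{X_f}=0$ by the definition of $X_f$, a contradiction.

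Your approach is more elementary: you work directly with the finite-dimensional quotient $V_k=\mathcal P_k/(\mathcal I_f\cap\mathcal P_k)$ and reduce everything to the equivalence of norms. The key observation---that $p\equiv 0$ on $U\cap\Gamma_f$ forces $p_f\equiv 0$ on $\mathbb C^n$ and hence $p\in\mathcal I_f$---immediately shows $\|\cdot\|_2$ is a genuine norm on $V_k$, and no further analysis is needed. (In fact your verification that $\|\cdot\|_1$ is a norm, while correct, is redundant: since $U\cap\Gamma_f\subset U$ one has $\|\cdot\|_2\le\|\cdot\|_1$, so $\|\cdot\|_1$ is automatically positive definite once $\|\cdot\|_2$ is; and for the desired inequality $\|\cdot\|_1\le c(k)\|\cdot\|_2$ only positive definiteness of $\|\cdot\|_2$ is required.) Your argument bypasses Montel, Sadullaev, and the integral-dependence step entirely, at the cost of being slightly less self-contained about why $V_k$ embeds in $C(X_f)$---but that point is immediate from $X_f\subset\mathcal Z_f$.
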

\begin{proof}
Assume, on the contrary, that the statement is wrong for some $k_0\in\mathbb Z_+$. Then there exists the sequence of polynomials $\{p_i\}_{i\in\mathbb N}\subset\mathcal P(\mathbb C^{n+m})$ of degrees $\le k_0$ such that
\[
\sup_U |p_i|=1\quad {\rm for\ all}\quad i\in\mathbb N\quad {\rm and}\quad \lim_{i\rightarrow\infty}\,\sup_{U\cap\Gamma_f}|p_i|=0.
\]
Due to \cite[Th.~2.2]{S} the sequence $\{p_i\}_{i\in\mathbb N}$ is uniformly bounded on each compact subset of $X_f$ (cf. \eqref{eq2.6}). Thus, due to the Montel theorem, $\{p_i\}_{i\in\mathbb N}$ contains a subsequence uniformly converging on compact subsets of $X_f$ to a function $g\in C(X_f)$ holomorphic outside of the set of singular points of $X_f$ and such that $\sup_U |g|=1$ and $g|_{\Gamma_f}=0$. By definition, $g$ is a regular function on the affine algebraic variety $X_f$ of pure dimension $l$. Hence, there exist polynomials $q_1,\dots , q_s\in\mathcal P(\mathbb C^{n+m})$, where $q_s|_{X_f}\not\equiv 0$, such that
\begin{equation}\label{eq2.7}
g^s(x)+q_1(x) g^{s-1}(x)+\cdots + q_s(x)=0\quad {\rm for\ all}\quad x\in X_f.
\end{equation}
Equation \eqref{eq2.7} and the fact that $g|_{\Gamma_f}=0$ imply that $q_s=0$ on $\Gamma_f$. Therefore $q_s|_{X_f}=0$ by the definition of $X_f$, a contradiction proving the lemma.
\end{proof}
We set
\[
C(r)=e A(r),\quad r\ge 1,\quad \mu(k):=\big\lfloor\max\{\ln c({\rm deg}\,p), {\rm deg}\,p\}\big\rfloor +1,\quad k\in\mathbb Z_+.
\]
Then using the lemma and equation \eqref{eq2.6} we obtain, for each $r\ge 1$  and all $p\in\mathcal P(\mathbb C^{n+m})$, 
\begin{equation}\label{eq2.8}
\sup_{\mathbb B^n_{re}} |p_f|\le c({\rm deg}\,p)A(r)^{{\rm deg}\, p}\sup_{\mathbb B^n} |p_f|\le C(r)^{\mu({\rm deg}\,p)}\sup_{\mathbb B_r^n} |p_f|.
\end{equation}
Inequality \eqref{eq2.8} and the Hadamard three circle theorem imply (see, e.g., \cite[Sect.~3.1]{B}),
 for each $0<r<1$ and all $p\in\mathcal P(\mathbb C^{n+m})$, 
 \[
\sup_{\mathbb B^n_{re}} |p_f|\le C(1)^{\mu({\rm deg}\,p)}\sup_{\mathbb B_r^n} |p_f|.
\]
Thus, $\Gamma_f$ admits the Bernstein type inequality of the exponent
$\mu$.\smallskip

(b) Suppose that condition \eqref{e2}  is valid for a compact nonpluripolar set $K\subset\mathbb B_{r_0}^n$ for some $r_0>0$. Then for each $r\ge r_0$  and all $p\in\mathcal P(\mathbb C^{n+m})$, $p|_{\Gamma_f}\not\equiv 0$, 
\[
\frac{\sup_{\mathbb B^n_{er}} |p_f|}{\sup_{\mathbb B^n_r} |p_f|}\le \frac{\sup_{\mathbb B^n_{er}} |p_f|}{\sup_{\mathbb B^n_{r_0}} |p_f|}\le \frac{\sup_{\mathbb B^n_{er}} |p_f|}{\sup_{K} |p_f|}\le C(K;r)^{\mu({\rm deg}\,p)}.
\]
For $r< r_0$ a similar inequality with $C(K;r)$ replaced by $C(K;r_0)$ follows from that for $r=r_0$ by the Hadamard three circle theorem. Thus, $\Gamma_f$ admits the Bernstein type inequality of exponent $\mu$.

Conversely, assume that $\Gamma_f$ admits the Bernstein type inequality of exponent $\mu$. Suppose that $K\Subset\mathbb B^n_{r_0}$, $r_0>0$, is a compact nonpluripolar set. We set
\[
t_K:=\inf\left\{\frac{1}{\mu({\rm deg}\,p)}\ln \frac{\sup_K |p_f|}{\sup_{\mathbb B^n_{r_0}}|p_f|}   \right\},
\]
where the infimum is taken over all polynomials $p\in \mathcal P(\mathbb C^{n+m})$ such that $p|_{\Gamma_f}\not\equiv 0$.
\begin{Lm}\label{lem2.2}
$t_K>-\infty$.
\end{Lm}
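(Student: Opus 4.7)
The plan is to argue by contradiction and exploit the nonpluripolarity of $K$ through the relative extremal function of the pair $(K,\Omega)$, where $\Omega:=\mathbb{B}^n_{er_0}$. The Bernstein hypothesis packages the family $\bigl\{\tfrac{1}{\mu({\rm deg}\,p)}\ln|p_f|\bigr\}$ (centered at its supremum over $\mathbb{B}^n_{r_0}$) into a locally uniformly bounded family of plurisubharmonic functions, and nonpluripolarity of $K$ then prohibits the whole family from degenerating on $K$.

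Concretely, suppose for contradiction that $t_K=-\infty$, and pick $p_k\in\mathcal{P}(\mathbb{C}^{n+m})$, $p_k|_{\Gamma_f}\not\equiv 0$, with
\[
M_k:=-\frac{1}{\mu({\rm deg}\,p_k)}\ln\frac{\sup_K|p_{k,f}|}{\sup_{\mathbb{B}^n_{r_0}}|p_{k,f}|}\longrightarrow\infty.
\]
Set
\[
\tilde u_k(z):=\frac{\ln|p_{k,f}(z)|-\ln\sup_{\mathbb{B}^n_{r_0}}|p_{k,f}|}{\mu({\rm deg}\,p_k)};
\]
then $\tilde u_k$ is plurisubharmonic on $\mathbb{C}^n$, $\sup_{\mathbb{B}^n_{r_0}}\tilde u_k=0$, $\sup_K\tilde u_k=-M_k$, and the Bernstein inequality \eqref{e1} at $r=r_0$ yields $\tilde u_k\le\ln C(r_0)$ on $\Omega$ (WLOG $C(r_0)>1$). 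Rescaling, $\hat u_k:=\tilde u_k/M_k$ is plurisubharmonic with $\sup_{\mathbb{B}^n_{r_0}}\hat u_k=0$, $\hat u_k\le-1$ on $K$, and $\hat u_k\le\varepsilon_k:=\ln C(r_0)/M_k\to 0$ on $\Omega$. Consequently $(\hat u_k-\varepsilon_k)/(1+\varepsilon_k)$ is plurisubharmonic on $\Omega$ with values $\le 0$ on $\Omega$ and $\le-1$ on $K$, hence is pointwise dominated by the relative extremal function $\omega^*:=\omega^*(\cdot;K,\Omega)$ of the pair $(K,\Omega)$. Rearranging gives
\[
\hat u_k(z)\le\varepsilon_k+(1+\varepsilon_k)\,\omega^*(z),\qquad z\in\Omega.
\]

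The only nontrivial step, and the main obstacle, is to extract from the nonpluripolarity of $K$ the strict negativity $\sup_{\bar{\mathbb{B}}^n_{r_0}}\omega^*<0$. I would argue: nonpluripolarity forces $\omega^*=-1$ on $K$ outside a pluripolar set, so $\omega^*\not\equiv 0$; being plurisubharmonic and $\le 0$ on the connected open set $\Omega$, the maximum principle forces $\omega^*<0$ throughout $\Omega$; upper semicontinuity on the compact set $\bar{\mathbb{B}}^n_{r_0}\Subset\Omega$ then yields $\eta:=-\sup_{\bar{\mathbb{B}}^n_{r_0}}\omega^*>0$. With this in hand, taking the supremum over $\bar{\mathbb{B}}^n_{r_0}$ in the preceding inequality gives $0\le\varepsilon_k-(1+\varepsilon_k)\eta$, i.e.\ $\eta\le\varepsilon_k/(1+\varepsilon_k)\to 0$, contradicting $\eta>0$. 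All properties of $\omega^*$ used here are standard pluripotential theory, referenced from \cite{K}; the remainder of the argument is formal manipulation built out of the Bernstein hypothesis and the normalizations above.
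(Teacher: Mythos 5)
Your proof is correct and takes a genuinely different route from the paper's. The paper forms the upper envelope $u:=\varlimsup_{k}\frac{1}{\mu(\deg p_k)}\ln\bigl(|p_{kf}|/\sup_{\mathbb B^n_{r_0}}|p_{kf}|\bigr)$, uses the Hartogs lemma to conclude that its upper semicontinuous regularization $u^*$ is a nonidentical $-\infty$ plurisubharmonic function with $\sup_{\mathbb B^n_{r_0}}u^*=0$, and then applies the negligible-set theorem \cite[Th.\,4.2.5]{BT}: since $\{u<u^*\}$ is pluripolar and $u|_K=-\infty$ with $K$ nonpluripolar, $u^*=-\infty$ on a nonpluripolar set, hence everywhere, a contradiction. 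You instead rescale each term of the sequence so that, after a small affine correction, it belongs to the defining family of the relative extremal function $\omega^*(\cdot;K,\Omega)$, and you reduce the contradiction to the single numerical fact $\eta:=-\sup_{\bar{\mathbb B}^n_{r_0}}\omega^*>0$ versus $\eta\le\varepsilon_k/(1+\varepsilon_k)\to 0$. The underlying pluripotential input is the same (your claim that $\omega^*\not\equiv 0$ is the standard equivalence, via the same negligible-set theorem, between $\omega^*(\cdot;K,\Omega)\equiv 0$ and pluripolarity of $K$; see \cite{K}), but the packaging differs: your version is quantitative --- unwinding $0\le\varepsilon_k-(1+\varepsilon_k)\eta$ actually yields the effective bound $t_K\ge -\ln C(r_0)\cdot(1-\eta)/\eta$ whenever $\eta<1$ --- while the paper's version is shorter because it never introduces or analyzes $\omega^*$, letting the sequence itself produce in the limit the one plurisubharmonic function to which nonpluripolarity of $K$ is applied directly.
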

\begin{proof}
Assume, on the contrary, that $t_K=-\infty$. Then there
exists a sequence of nonidentical zero on $\Gamma_f$ polynomials $p_k\in \mathcal P(\mathbb C^{n+m})$ such that
\[
\lim_{k\rightarrow\infty}\frac{1}{\mu({\rm deg}\,p_{k })}\ln \frac{\sup_K |p_{k f}|}{\sup_{\mathbb B^n_{r_0}}|p_{kf}|}=-\infty. 
\]
Let us consider the function
\[
u(z):=\varlimsup_{k\rightarrow\infty}\frac{1}{\mu({\rm deg}\,p_{k})}\ln \frac{|p_{kf}(z)|}{\sup_{\mathbb B^n_{r_0}}|p_{kf}|},\qquad z\in\mathbb C^n.
\]
Then the Bernstein type inequality \eqref{e1} implies that for each $r\ge r_0$ there exists a real number $\tilde c(r)$ such that
\[
\sup_{\mathbb B_r^n}u\le \tilde c(r).
\]
Let $u^*$ be the upper semicontinuous regularization of $u$. The previous inequality and the Hartogs lemma on subharmonic functions imply that $u^*$ is a nonidentical $-\infty$ plurisubharmonic function on $\mathbb C^n$ such that
$\sup_{\mathbb B_{r_0}^n} u^*=0$. Moreover,  $u|_{K}=-\infty$ and the set $S\subset\mathbb C^n$ where $u$ differs from $u^*$ is pluripolar, see \cite[Th.\,4.2.5]{BT}. Since by the hypothesis $K$ is nonpluripolar, $K\setminus S$ is nonpluripolar as well. Thus $u^*=-\infty$ on the nonpluripolar set $K\setminus S$ and so it equals $-\infty$ everywhere, a contradiction proving the lemma.
\end{proof}
Lemma \ref{lem2.2}  and the Bernstein type inequality show that for all $p\in\mathcal P(\mathbb C^{n+m})$, $r\ge r_0$,
\[
\begin{array}{r}
\displaystyle
\sup_{\mathbb B_{r}^n}|p_f|\le C(re^{-1})^{\mu({\rm deg}\,p)}\sup_{\mathbb B_{re^{-1}}^n}|p_f|\le\cdots\le \left(\prod_{i=1}^{\big\lfloor\ln\frac{r}{r_0}\big\rfloor+1} C(re^{-i})^{\mu({\rm deg}\,p)}\right)\sup_{\mathbb B_{r_0}^n}|p_f|\medskip\\
\displaystyle \le
C(K;r)^{\mu({\rm deg}\,p)}\sup_{K}|p_f|,\qquad\qquad\ \ \
\end{array}
\]
where
\[
C(K;r):=e^{-t_K}\left(\prod_{i=1}^{\big\lfloor\ln\frac{r}{r_0}\big\rfloor+1} C(re^{-i})\right).
\]
Also, for $0<r<r_0$ we obviously have
\[
\sup_{\mathbb B_{r}^n}|p_f|\le\sup_{\mathbb B^n}|p_f|\le
(e^{-t_K})^{\mu({\rm deg}\,p)}\sup_{K}|p_f|.
\]
This completes the proof of (b).
\smallskip

(c) By $\mathcal P_k (\mathbb C^{N})\subset \mathcal P (\mathbb C^{N})$ we denote the space of holomorphic polynomials of degree at most $k$. Then
\[
{\rm dim}_{\mathbb C}\bigl(\mathcal P_k (\mathbb C^{N})\bigr)={ N+k \choose N}=:d_{k,N}.
\]

Assume without loss of generality that the coordinate $f_1:\mathbb C^n\rightarrow\mathbb C$ of the holomorphic map $f=(f_1,\dots, f_n):\mathbb C^n\rightarrow\mathbb C^m$ is nonpolynomial  (for otherwise, if all $f_i$ are polynomials, then $\Gamma_f$ is algebraic). 

In what follows for an entire function $g$ on $\mathbb C^n$ by
$
\sum_{|\alpha|=0}^\infty\, [g]_\alpha z^\alpha$
we denote its Taylor series at $0\in\mathbb C^n$. Here $\alpha=(\alpha_1,\dots,\alpha_n)\in\mathbb Z_+^n$, $|\alpha|=\alpha_1+\cdots +\alpha_n$  and $z^\alpha:=z_1^{\alpha_1}\cdots z_n^{\alpha_n}$, $z=(z_1,\dots, z_n)\in\mathbb C^n$. 

We set
\begin{equation}\label{eq2.10}
s_k:=\left\lfloor\frac{k^{1+\frac 1n}}{(n+2)^{\frac 1n}}\right\rfloor.
\end{equation}
Since
\[
\lim_{k\rightarrow\infty}\frac{d_{s_k, n}}{k^{n+1}}=\frac{1}{(n+2)n!}<\frac{1}{(n+1)!}=\lim_{k\rightarrow\infty}\frac{d_{k,n+1}}{k^{n+1}},
\]
there is $k_0\in\mathbb N$ such that $d_{s_k,n} <d_{k,n+1}$ for all $k \ge k_0$. 
\begin{Lm}\label{lem2.3}
For every $k\ge k_0$ there exists a polynomial $P_k\in\mathcal P_k(\mathbb C^{n+m})$ such that $P_{k f}:=P_k(\cdot,f(\cdot))\not\equiv 0$ whose Taylor series at $0\in\mathbb C^n$ has a form 
\[
P_{k f}(z)=\sum_{|\alpha|=s_k+1}^\infty  [P_{k f}]_\alpha\,z^\alpha,\qquad z\in\mathbb C^n.
\]
\end{Lm}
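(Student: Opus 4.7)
The plan is a linear-algebra dimension count on a suitable subspace of $\mathcal P_k(\mathbb C^{n+m})$. First I would let $V_k\subset\mathcal P_k(\mathbb C^{n+m})$ denote the subspace consisting of polynomials depending only on the first $n+1$ coordinates $z_1,\dots,z_n,w_1$, so that $\dim_{\mathbb C}V_k=d_{k,n+1}$, and for each $P\in V_k$ the associated entire function $P_f$ equals $P(z,f_1(z))$.

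Next I would consider the $\mathbb C$-linear map $T:V_k\to\mathcal P_{s_k}(\mathbb C^n)$ sending $P$ to the Taylor polynomial of $P_f$ at $0\in\mathbb C^n$ truncated at total degree $s_k$. Since, by the standing hypothesis, $d_{s_k,n}<d_{k,n+1}$ for all $k\ge k_0$, the kernel of $T$ has dimension at least $d_{k,n+1}-d_{s_k,n}>0$, and any nonzero $P_k\in\ker T$ satisfies, by construction, that every Taylor coefficient of $P_{kf}$ at the origin of total degree at most $s_k$ vanishes.

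The point left open is to arrange that $P_{kf}\not\equiv 0$, and I expect this to be the main obstacle. It suffices to show that the evaluation map $V_k\to\mathcal O(\mathbb C^n)$, $P\mapsto P_f$, is injective, equivalently, that $f_1$ is transcendental over $\mathbb C(z_1,\dots,z_n)$. This I would deduce from the classical fact that any entire function on $\mathbb C^n$ algebraic over $\mathbb C(z_1,\dots,z_n)$ is necessarily a polynomial: if $\sum_{j=0}^d P_j(z)f_1(z)^j=0$ with $P_d\not\equiv 0$, then on a generic complex line through the origin, $|P_d|$ admits a polynomial lower bound for large argument, so that $|f_1|$ has polynomial growth of uniformly bounded degree along every such line; a standard line-by-line argument then forces $f_1$ itself to be a polynomial, contradicting the standing assumption. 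With injectivity in hand, any nonzero $P_k\in\ker T$ serves as the required polynomial.
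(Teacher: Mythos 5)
Your proof is correct and follows essentially the same route as the paper: your $V_k$ is exactly the pullback of $\mathcal P_k(\mathbb C^{n+1})$ to $\mathbb C^{n+m}$, and your map $T$ is the paper's truncation map $\pi$, so the dimension count $d_{s_k,n}<d_{k,n+1}$ producing a nonzero kernel is identical. The paper compresses the final step into "Since $f_1$ is nonpolynomial, $P_{kf}\not\equiv 0$," which is precisely the transcendence of $f_1$ over $\mathbb C(z_1,\dots,z_n)$ that you (correctly) justify via the classical fact that an entire function algebraic over $\mathbb C(z)$ has polynomial growth on a dense set of lines through the origin with a uniform degree bound, and hence is a polynomial.
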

\begin{proof}
Since $d_{s_k,n}<d_{k,n+1}$, the linear map $\pi:\mathcal P_{k}(\mathbb C^{n+1})\rightarrow\mathcal P_{s_k}(\mathbb C^n)$, 
\[
\pi(p)(z):=\sum_{|\alpha|=0}^{s_k}[p_{f_1}]_\alpha z^\alpha,\qquad p_{f_1}:=p(\cdot,f_1(\cdot)),\quad z\in\mathbb C^n,
\] 
has a nonzero kernel. Then as $P_k$ we choose the pullback of a nonzero element of ${\rm ker}\,\pi$ to $\mathbb C^{n+m}$ with respect to the natural projection $\mathbb C^{n+m}\rightarrow\mathbb C^{n+1}$ onto the first $n+1$ coordinates. Since $f_1$ is nonpolynomial, $P_{kf}\not\equiv 0$.
\end{proof}

\noindent (!) In what follows by $\mathcal L_n$  we denote the family of complex lines $l\subset\mathbb C^n$  passing through the origin.\smallskip

For $r>0$, let $l_r \in\mathcal L_n$ be a complex line  such that
\[
\sup_{l_r\cap \mathbb B_r^n}|P_{kf}|=M_{P_{kf}}(r).
\]
Let us identify $l_r$ with $\mathbb C$. Then the univariate entire function $h_k:=P_{kf}|_{l_r}$ has zero of order at least $s_k+1$ at $0$. Let $n_{h_k}(r)$ denote the number of zeros of $h_k$ in $l_r\cap \mathbb B_r^n=\mathbb D_r$ counted with their multiplicities. Then due to the Jensen type inequality, see \cite[Lm.\,1]{VP}, and the Bernstein type inequality of exponent $\mu$ for $\Gamma_f$ (cf. \eqref{e1}),
\[
s_{k}+1\le n_{h_k}(r)\le \frac{m_{h_k}(er)-m_{h_k}(r)}{\ln\left(\frac{1+e^2}{2e}\right)}\le \frac 52\ln\left(\frac{M_{P_{kf}}(er)}{M_{P_{kf}}(r)}\right)\le \frac 52 \mu(k)\ln C(r).
\]
Choosing here $r=1$ we obtain, cf. \eqref{eq2.10}, that there exists $c>0$ such that for all $k\ge k_0$
\[
\mu(k)\ge c k^{1+\frac 1n}.
\]
This implies the required statement:
\[
\varliminf_{k\rightarrow\infty}\frac{\mu(k)}{k^{1+\frac 1n}}\ne 0.
\]
(d) Let $C_i(r)$, $r>0$, be the constant in the Bernstein type inequality of exponent $\mu_i$ for $\Gamma_f$, $i=1,2$.
Then for all $p\in\mathcal P_k(\mathbb C^{n+m})\setminus\{0\}$, $r>0$ and all $\mu\ge\min\{\mu_1,\mu_2\}$ we have
\[
\begin{array}{l}
\displaystyle
\frac{\sup_{\mathbb B^n_{er}} |p_f|}{\sup_{\mathbb B^n_r} |p_f|}\le \min_{i=1,2}\left\{C_i(r)^{\mu_i({\rm deg}\,p)}\right\} \le\min_{i=1,2}\left\{\left(\max\bigl(C_1(r),C_2(r)\bigr)\right)^{\mu_i({\rm deg}(p))}\right\} \medskip\\
\displaystyle \quad\qquad\qquad =\bigl(\max\bigl(C_1(r),C_2(r)\bigr)\bigr)^{\min_{i=1,2}\left\{\mu_i({\rm deg}\,p)\right\}}\le \bigl(\max\bigl(C_1(r),C_2(r)\bigr)\bigr)^{\mu({\rm deg}\,p)}.
\end{array}
\]
This gives the required statement.\smallskip

(e)  For $w\ne 0$ we set $d_w:=\|w\|$.  Then for each $r\ge d_w+1$ the open ball $\mathbb B_r^n(w)$ contains the ball $\mathbb B_{r-d_w}^n$ and the open ball $\mathbb B_{er}^n(w)$ is contained in the ball $\mathbb B_{er+d_w}^n$. Hence,
for all $p\in\mathcal P_k(\mathbb C^{n+m})\setminus\{0\}$ and all $r\ge d_w+1$ we obtain, for $s:=\left\lfloor\ln\left(\frac{er+d_w}{r-d_w}\right)\right\rfloor+1$,
\begin{equation}\label{eq2.11}
\begin{array}{l}
\displaystyle \frac{\sup_{\mathbb B^n_{er}} |p_{f_w}|}{\sup_{\mathbb B^n_r} |p_{f_w}|}:=
\frac{\sup_{\mathbb B^n_{er}(w)} |p_f|}{\sup_{\mathbb B^n_r(w)} |p_f|} \le \frac{\sup_{\mathbb B^n_{er+d_w}} |p_f|}{\sup_{\mathbb B^n_{r-d_w}} |p_f|}\medskip\\
\displaystyle \qquad\qquad\qquad\le\left(\prod_{j=1}^{s-1}C\bigl(e^j(r-d_w)\bigr)\right)^{\mu({\rm deg}\,p)}=:C(r,w)^{\mu({\rm deg}\,p)}.
\end{array}
\end{equation}
From here, using the Hadamard three circle theorem, for all $r\in (0,d_w+1)$ we get
\begin{equation}\label{eq2.12}
\frac{\sup_{\mathbb B^n_{er}} |p_{f_w}|}{\sup_{\mathbb B^n_r} |p_{f_w}|}:=\frac{\sup_{\mathbb B^n_{er}(w)} |p_f|}{\sup_{\mathbb B^n_r(w)} |p_f|}\le C(d_w+1,w)^{\mu({\rm deg}(p))}.
\end{equation}
Inequalities \eqref{eq2.11} and \eqref{eq2.12} show that $f_w$ admits the Bernstein type inequality of exponent $\mu$ as well.\smallskip

(f) By definition, 
\[
\Gamma_{f_1\times f_2}=\{ (z,f_1(z),w, f_2(w))\, : \,  z\in\mathbb C^{n_1},\ w\in\mathbb C^{n_2}\}\subset\mathbb C^{n_1+n_2+m_1+m_2}.
\]
For each $r>0$  and $p\in\mathcal P(\mathbb C^{n_1+n_2+m_1+m_2})$  applying Bernstein type inequalities of exponents $\mu_1$ and $\mu_2$ to restrictions of $p$ to cross sections $\Gamma_{f_1}\times \{(w,f_2(w))\}$ and $\{(z,f_1(z))\}\times\Gamma_{f_2}$, for fixed $z\in\mathbb C^{n_1}$, $w\in\mathbb C^{n_2}$, we get
\[
\begin{array}{l}
\displaystyle
\sup_{\mathbb B_{er}^{n_1}\times \mathbb B_{er}^{n_2}}|p_{f_1\times f_2}|\le C_1(r)^{\mu_1({\rm deg}\,p)}\sup_{\mathbb B_{r}^{n_1}\times \mathbb B_{er}^{n_2}}|p_{f_1\times f_2}|\medskip\\
\displaystyle \quad\qquad\qquad\qquad\le C_1(r)^{\mu_1({\rm deg}\,p)} C_2(r)^{\mu_2({\rm deg}\,p)}\sup_{\mathbb B_{r}^{n_1}\times \mathbb B_{r}^{n_2}}|p_{f_1\times f_2}|\medskip\\
\displaystyle\quad\qquad\qquad\qquad\le \bigl(C_1(r)\, C_2(r)\bigr)^{\max_{i=1,2}\{\mu_i({\rm deg}\,p)\}}\sup_{\mathbb B_{r}^{n_1}\times \mathbb B_{r}^{n_2}}|p_{f_1\times f_2}|.
\end{array}
\]
Replacing products of balls by suitable inscribed and circumscribed balls of $\mathbb C^{n_1+n_2+m_1+m_2}$ and arguing as in the proof of (e) we obtain that
$\Gamma_{f_1\times f_2}$ admits the Bernstein type inequality of exponent $\max(\mu_1,\mu_2)$.

Now, assume that $\Gamma_{f_1\times f_2}$ admits the Bernstein type inequality of exponent $\mu$. 
Applying this inequality to polynomials $p$ pulled back from $\mathbb C^{n_i+m_i}$ by means of the natural projections $\mathbb C^{n_1+n_2+m_1+m_2}\rightarrow \mathbb C^{n_i+m_i}$, $i=1,2$, we obtain
\[
\sup_{\mathbb B_{er}^{n_i}}|p_{f_i}|=\sup_{\mathbb B_{er}^{n_1+n_2}}|p_{f_1\times f_2}|\le C(r)^{\mu({\rm deg}\,p)}
\sup_{\mathbb B_{r}^{n_1+n_2}}|p_{f_1\times f_2}|=C(r)^{\mu({\rm deg}\,p)}\,
\sup_{\mathbb B_{r}^{n_i}}|p_{f_i}|.
\]
Thus, $\Gamma_{f_i}$, $i=1,2$, admit the Bernstein type inequality of exponent $\mu$.

The proof of the theorem is complete.

\section{Proof of Theorem \ref{theo1.3}}
\begin{proof}
(a) Approximating polynomial $p=1$ by the sequence of polynomials of degree $k$ $\{p_i\}_{i\in\mathbb N}$, 
\begin{equation}\label{equ3.21}
p_i(z):=\frac{i+z_1^k}{\sup_{z\in K} (i+z_1^k)},\quad z=(z_1,\dots, z_{n+m})\in\mathbb C^{n+m},
\end{equation}
we conclude that $u_{K,\mu}^k\ge 0$.
Then for $z_1,z_2\in\mathbb C^n$ we have 
\[
\begin{array}{l}
\displaystyle
u_{K,\mu}^k(z_1;f)-u_{K,\mu}^k(z_2;f)\medskip\\
\displaystyle
\le \sup\left\{\frac{\ln^+ |p_f(z_1)|-\ln^+ |p_f(z_2)|}{\max\bigl(1,\mu(k)\bigr)}\, :\, p\in\mathcal P(\mathbb C^{n+m}),\ {\rm deg}\,p= k,\ \sup_{K}|
p_f|=1\right\}\medskip\\
\displaystyle \le \sup\left\{\frac{|p_f(z_1)-p_f(z_2)|}{\max\bigl(1,\mu(k)\bigr)}\, :\, p\in\mathcal P(\mathbb C^{n+m}),\ {\rm deg}\,p= k,\ \sup_{K}|p_f|=1\right\}\medskip\\
\displaystyle \le  \sup\left\{\frac{C(z_1,z_2)\,\|z_1-z_2\|}{\max\bigl(1,\mu(k)\bigr)}\, :\, p\in\mathcal P(\mathbb C^{n+m}),\ {\rm deg}\,p= k,\ \sup_{K}|p_f|=1\right\}\medskip\\
\displaystyle \le C(z_1,z_2)\,\|z_1-z_2\|.
\end{array}
\]
Here we use that $\ln^+ x:=\max(0,\ln x)$, $x>0$, is a Lipschitz function with Lipschitz constant $1$ and  the uniform boundedness of the family 
\[
\{p_f\, :\, p\in\mathcal P(\mathbb C^{n+m}),\ {\rm deg}\,p= k,\ \sup_{K}|p_f|=1\}|_{U}\subset C(U)
\]
on each compact subset $U\subset\mathbb C^n$. The constant $C$ in the above inequality is obtained by applying the Cauchy estimates for derivatives of $p_f$ on an open polydisk containing $z_1$ and $z_2$.

The above inequality shows that the function $u_{K,\mu}^k$ is locally Lipschitz and, in particular, it is continuous. Then, by definition, it is plurisubharmonic.\smallskip

(b) Clearly, if $\Gamma_f$ admits the Bernstein type inequality of exponent $\mu$, then the function $u_{K,\mu}$ is locally bounded from above. Conversely, assume that the function $u_{K,\mu}$ is locally bounded from above. Then according to the Hartogs lemma on subharmonic functions, the sequence of continuous plurisubharmonic functions $\{u_{K,\mu}^k\}_{k\in\mathbb N}$ is uniformly bounded from above on each compact subset of $\mathbb C^n$. This implies fulfillment of inequality \eqref{e2} and so due to Theorem \ref{theo1}\,(b), $\Gamma_f$ admits the Bernstein type inequality of exponent $\mu$. \smallskip

(c) Suppose that $u_{K,\mu,\bar k}\not\equiv 0$ for every subsequence $\bar k\subset\mathbb N$ but $\mu$ is not optimal. Then there exists a function $\mu_1:\mathbb Z_+\rightarrow\mathbb R_+$ such that $\mu_1\le c\mu$ for some $c>0$, 
\[
\varlimsup_{k\rightarrow\infty}\frac{\mu(k)}{\mu_1(k)}=\infty
\]
and $\Gamma_f$ admits the Bernstein type inequality of exponent $\mu_1$. Let $\bar k=\{k_i\}_{i\in\mathbb N}\subset\mathbb N$ be a subsequence such that
\[
\lim_{i\rightarrow\infty}\frac{\mu(k_i)}{\mu_1(k_i)}=\infty.
\]
We have, cf. Theorem \ref{theo1}\,(b),
\[
\begin{array}{l}
\displaystyle
0\le u_{K,\mu;\bar k}(z;f):=\varlimsup_{i\rightarrow\infty}\sup\left\{\frac{\ln |p_f(z)|}{\max\bigl(1,\mu(k_i)\bigr)}\, :\, p\in\mathcal P(\mathbb C^{n+m}),\ {\rm deg}\,p= k_i,\ \sup_K |p_f|=1\right\}\medskip\\
\displaystyle =\varlimsup_{i\rightarrow\infty}\sup\left\{\frac{\max\bigl(1,\mu_1(k_i)\bigr)}{\max\bigl(1,\mu(k_i)\bigr)}\cdot\frac{\ln |p_f(z)|}{\max\bigl(1,\mu_1(k_i)\bigr)}\, :\, p\in\mathcal P(\mathbb C^{n+m}),\ {\rm deg}\,p= k_i,\ \sup_K |p_f|=1\right\}\\
\\
\displaystyle \le \varlimsup_{i\rightarrow\infty}\frac{\max\bigl(1,\mu_1(k_i)\bigr)}{\max\bigl(1,\mu(k_i)\bigr)} \ln C(K;\|z\|)=0.
\end{array}
\]
Here $C(K;r)$, $r>0$, is the constant in \eqref{e2} for the exponent $\mu_1$.

This implies that $u_{K,\mu;\bar k}= 0$, a contradiction showing that $\mu$ is optimal.\smallskip

Conversely, suppose that $\Gamma_f$ admits the Bernstein type inequality of an optimal exponent $\mu$ but there exists a subsequence $\bar k=\{k_i\}_{i\in\mathbb N}\subset\mathbb N$ such that $u_{K,\mu;\bar k}= 0$. Then the Hartogs lemma on subharmonic functions implies that for each $\ell\in\mathbb N$ there exists a number $i(\ell)\in\mathbb N$ such that for all $i\ge i(\ell)$
\begin{equation}\label{eq3.11}
\sup_{\mathbb B_\ell^n} u_{K,\mu}^{k_i}\le \frac{1}{\ell}.
\end{equation}
Passing to a subsequence, if necessary, we may assume that $\{i(\ell)\}_{\ell\in\mathbb N}$ is an increasing sequence.
We set $\bar k_*:=\{k_{i(\ell)}\}_{\ell\in\mathbb N}$.
Let us define a function $\mu_1:\mathbb Z_+\rightarrow\mathbb R_+$ by the formula
\[
\mu_1(k)=\left\{
\begin{array}{cccc}
\mu(k)&{\rm if}&k\not\in \bar k_*\medskip\\
\frac{\mu(k)}{\ell}&{\rm if}&k=k_{i(\ell)}\ {\rm for\ some}\ \ell\in\mathbb N.
\end{array}
\right.
\]
Then due to \eqref{eq3.11} we obtain
\[
\begin{array}{r}
\displaystyle
u_{K,\mu_1}(z;f)=\varlimsup_{k\rightarrow\infty}u_{K,\mu_1}^k(z;f)=\max\left(\varlimsup_{\stackrel{ k\rightarrow\infty}{_{k\not\in \bar k_*}}}u_{K,\mu}^k(z;f), \varlimsup_{\ell\rightarrow\infty} \ell\, u_{K,\mu}^{i(\ell)}(z;f) \right)\medskip\\
\displaystyle \le\max\left(u_{K,\mu}(z;f), 1 \right).\qquad\qquad\qquad\qquad\quad\,
\end{array}
\]
Thus, $u_{K,\mu_1}$ is locally bounded from above and so part (b) of the theorem implies that $\Gamma_f$ admits the Bernstein type inequality of exponent $\mu_1$. Clearly, $\mu_1\le\mu$. Thus, due to the optimality of $\mu$, function $\mu_1$ must be equivalent to $\mu$. However, this is wrong as $\varlimsup_{k\rightarrow\infty}\frac{\mu(k)}{\mu_1(k)}=\infty$. This contradiction shows that $u_{K,\mu;\bar k}\not\equiv 0$ for every subsequence $\bar k\subset\mathbb N$.\smallskip 

(d) If $\Gamma_f$ admits the Bernstein type inequality of exponent $\mu$, then due to  Theorem \ref{theo1}\,(b), $u_K^k(r;f)\le\ln C(K;r)\mu(k)$ for all $k\in\mathbb Z_+$, $r>0$. This implies that $\langle u_K^{{\bf\cdot}}(r;f)\rangle\le\langle\mu\rangle$ for all $r>0$. Assume, in addition, that $\langle\mu\rangle\in \mathscr U_f^K$.
Then $\langle\mu\rangle$ is the maximal element of $\mathscr U_f^K$.
If $\mu$ is not optimal for $\Gamma_f$, then there is an exponent $\mu_1$ for $\Gamma_f$ such that $\langle\mu_1\rangle<\langle\mu\rangle$. Since $\langle\mu\rangle\in\mathscr U_f^K$, we must have $\langle\mu\rangle\le\langle\mu_1\rangle$, a contradiction showing that $\mu$ is optimal.

Conversely, suppose that $\mu$ is an optimal exponent for $\Gamma_f$. We require
\begin{Lm}\label{lemma4.1}
There exist $r>0$ and $c>0$ such that for all $k\in\mathbb N$
\[
u_K^k(r;f)\ge c\max\bigl(1,\mu(k)\bigr).
\]
\end{Lm}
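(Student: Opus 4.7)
The plan is to argue by contradiction, leveraging the characterization of optimality from part (c). Suppose no pair $(r,c)$ as claimed exists; then, applying the failure with $r=\ell$ and $c=\frac{1}{\ell}$ for each $\ell\in\mathbb N$, I obtain indices $k_\ell\in\mathbb N$ satisfying
\[
u_K^{k_\ell}(\ell;f)<\frac{\max\bigl(1,\mu(k_\ell)\bigr)}{\ell}, \qquad \text{equivalently} \qquad \max_{\|z\|\le\ell} u_{K,\mu}^{k_\ell}(z;f)<\frac{1}{\ell}.
\]
The argument then bifurcates according to whether $\{k_\ell\}$ is unbounded or bounded.

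In the unbounded case, I extract a subsequence of distinct values $k_{\ell_j}\to\infty$ and set $\bar k:=\{k_{\ell_j}\}_{j\in\mathbb N}$. For any fixed $z\in\mathbb C^n$ and every $j$ with $\|z\|\le\ell_j$, the displayed bound yields $u_{K,\mu}^{k_{\ell_j}}(z;f)<\frac{1}{\ell_j}$; combined with nonnegativity of $u_{K,\mu}^k$ from part (a), this gives $u_{K,\mu;\bar k}(z;f)=0$ everywhere on $\mathbb C^n$. This contradicts part (c), which asserts that optimality of $\mu$ forces $u_{K,\mu;\bar k}\not\equiv 0$ for every subsequence $\bar k$.

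In the bounded case, some $k^*\in\mathbb N$ is attained by $k_\ell$ for infinitely many $\ell$. Monotonicity of $u_K^{k^*}(r;f)$ in $r$ and the displayed bound along that subsequence give $u_K^{k^*}(r;f)=0$ for every $r>0$; since the sequence $\{u_K^k(r;f)\}_k$ is nondecreasing in $k$, this forces $u_{K,\mu}^1\equiv 0$ on $\mathbb C^n$. Since $K$ is nonpluripolar, the coordinate polynomial $p(z,w):=z_1$ on $\mathbb C^{n+m}$ satisfies $c:=\sup_K|p_f|>0$, and applying $u_{K,\mu}^1\equiv 0$ to the normalization $p/c$ yields $|z_1|\le c$ for every $z_1\in\mathbb C$, an absurdity.

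The main obstacle is the bounded case: the optimality characterization from part (c) only refutes scenarios in which the failure is witnessed along degrees tending to infinity, so the degenerate possibility that a fixed degree $k^*$ realizes arbitrarily small values of $u_K^{k^*}(\,\cdot\,;f)$ must be excluded by a separate Liouville-type argument, which is what the monotonicity-in-$k$ propagation to $u_{K,\mu}^1$ together with nonpluripolarity of $K$ is used for. Once this is handled, the unbounded case is a direct transcription of part (c).
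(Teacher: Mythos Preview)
Your proof is correct and follows essentially the same contradiction argument as the paper: set $r=\ell$, $c=1/\ell$ to produce indices $k_\ell$, split according to whether $\{k_\ell\}$ is bounded, and in the unbounded case invoke part~(c). The only difference is cosmetic: in the bounded case the paper derives $u_{K,\mu}^{k'}\equiv 0$ for the repeated index $k'$ and declares this a contradiction without elaboration, whereas you reduce to $k=1$ via the monotonicity of $u_K^k(r;f)$ in $k$ and spell out the Liouville-type contradiction with $p=z_1$.
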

\begin{proof}
Assume, on the contrary, that for each $r>0$ and $c>0$ there exists an integer $k_{r,c}\in\mathbb N$ such that
\begin{equation}\label{eq4.1}
u_K^{k_{r,c}}(r;f)< c\max\bigl(1,\mu(k_{r,c})\bigr).
\end{equation}
Choose $r:=j$, $c:=\frac 1j$ and set $k_{j}:=k_{j,1/j}$, $j\in\mathbb N$. Let us show that $\varlimsup_{j\rightarrow\infty} k_j=\infty$. Indeed, for otherwise, the sequence $\bar k:=\{k_j\}_{j\in\mathbb N}$ is bounded. In particular, there exists an element $k'\ge 1$ of $\bar k$ such that (as each $u_{K,\mu}^k$ is plurisubharmonic) $u_{K,\mu}^{k'}= 0$, a contradiction.

Since each $u_{K,\mu}^{k}$ is plurisubharmonic, inequality \eqref{eq4.1} implies that the function
\[
u_{K,\mu;\bar k}:=\varlimsup_{j\rightarrow\infty}u_{K,\mu}^{k_j}
\]
is identically zero. Due to part (c) of the theorem, this contradicts the optimality of $\mu$.

The proof of the lemma is complete.
\end{proof}
As the corollary of the lemma we get
\[
(\langle\mu\rangle\ge)\,\langle u_K^{\cdot}(r;f)\rangle\ge \langle\mu\rangle.
\]
Thus $\langle\mu\rangle=\langle u_K^{\cdot}(r;f)\rangle\in\mathscr U_f^K$ is maximal. 

The proof of the theorem is complete.
\end{proof}
\section{Proof of Theorem \ref{theo1.4}}
In the proof of the theorem we use the following Bernstein type inequality for exponential polynomials established in \cite{VP}.

Let
\[
g(z)=\sum_{j=1}^{n}p_j(z) e^{q_j z},\qquad z\in\mathbb C,
\]
where $p_j\in\mathcal P(\mathbb C)$,  ${\rm deg}\, p_j=d_j$ and $q_j\in \mathbb C$ are pairwise disjoint, $1\le j\le n$, be an exponential polynomial on $\mathbb C$. The expression
\[
m(g):=\sum_{j=1}^n (1+d_j)
\]
is called the {\em degree} of $g$. In turn, the exponential type of $g$ is defined by the
formula
\[
\epsilon(g):=\max_{1\le j\le n}|q_j|.
\]
Then \cite[p.\,27, Eq.\,(21)]{VP} asserts that for each $r>0$,
\begin{equation}\label{eq4.18}
\sup_{\mathbb D_{er}}|g|\le e^{m(g)+2er\epsilon(g)}\sup_{\mathbb D_r}|g|.
\end{equation}

\begin{proof}[Proof of Theorem \ref{theo1.4}]
First, we prove the theorem for $l=1$ and $P,Q$ the identity automorphisms. In this case, 
\[
f_{P,Q}(z)=f(z)=\bigl(e^{\alpha_1 z},\dots, e^{\alpha_m z}\bigr),\quad z\in\mathbb C,
\]
where $\alpha_1,\dots, \alpha_m$ are linearly independent over $\mathbb Q$ complex numbers.
Let $p\in\mathcal P_k(\mathbb C^{m+1})$,
\[
p(z,w):=\sum_{ |\gamma|\le k}c_\gamma\, z^{\gamma_1} w_1^{\gamma_2}\cdots w_m^{\gamma_{m+1}},\quad z\in\mathbb C,\quad w=(w_1,\dots, w_m)\in\mathbb C^m
\]
(here $\gamma=(\gamma_1,\dots,\gamma_{m+1})\in\mathbb Z_+^{m+1}$ and all $c_\gamma\in\mathbb C$).

Since $\alpha_1,\dots ,\alpha_m$ are linearly independent over $\mathbb Q$,
\[
p_f(z):=p(z, f(z))=\sum_{ |\gamma|\le k}c_\gamma\, z^{\gamma_1}e^{(\gamma_2 \alpha_1+\cdots +\gamma_{m+1} \alpha_m) z}=\sum_{|\gamma'|\le k} p_{\gamma'}(z) e^{(\gamma_2 \alpha_1+\cdots +\gamma_{m+1} \alpha_m) z};
\]
here $\gamma':=(\gamma_2,\dots,\gamma_{m+1})$ and
$p_{\gamma'}\in\mathcal P_{k-|\gamma'|}(\mathbb C)$. 

Then the exponential type of $p_f$ is
\[
\epsilon(p_f):=\max_{|\gamma|\le k} \left|\sum_{j=2}^{m+1} \gamma_j \alpha_{j-1}\right|\le k\max_{1\le j\le m}|\alpha_j|=:k \bar \alpha
\]
and the degree  of $p_f$ satisfies the inequality
\[
m(p_f)\le \sum_{|\gamma'|\le k}(1+(k-|\gamma'|))= \sum_{|\gamma|\le k} 1=d_{k,m+1}=:\binom{m+1+k}{k}.
\]
Hence, in this case \eqref{eq4.18} yields the inequality
\begin{equation}\label{eq4.19}
\sup_{\mathbb D_{er}}|p_f|\le e^{2er k\bar \alpha+d_{k,m+1}}\sup_{\mathbb D_{r}}|p_f|,\qquad r>0.
\end{equation}
Note that for all $k\ge 1$
\[
\bigl(e^{2er k\bar \alpha+d_{k,m+1}}\bigr)^{\frac{1}{k^{m+1}}}< 5^{m+1} e^{2er \bar\alpha}=:C(r,f).
\]
This and \eqref{eq4.19} show that $\Gamma_f$ satisfies the Bernstein type inequality of exponent $\mu_{{\rm id}}^{m+1}$. 

Next, we show that this exponent is optimal.

Since complex numbers $\alpha_1,\dots,\alpha_m$ are linearly independent over $\mathbb Q$, the restriction maps $\mathcal P_k(\mathbb C^{m+1})\rightarrow \mathcal P_k(\mathbb C^{m+1})|_{\Gamma_f}$, $k\in\mathbb Z_+$, are linear isomorphisms. Thus 
arguing as in the proof of Theorem \ref{theo1}\,(c) we conclude that for each $k\ge 1$ there exists a polynomial $g_k\in\mathcal P_k(\mathbb C^{m+1})$ such that  $g_{kf}\not\equiv 0$ and has zero of multiplicity $d_{k,m+1}-1$ at $0\in\mathbb C$.
Then due to the Jensen type inequality, see \cite[Lm.\,1]{VP}, and the Bernstein type inequality of exponent $\mu$ for $\Gamma_f$ (cf. the proof of Theorem \ref{theo1}\,(c) for similar arguments),
\[
d_{k,m+1}-1\le  \frac{m_{g_k}(er)-m_{g_k}(r)}{\ln\left(\frac{1+e^2}{2e}\right)}\le  \frac 52 \mu(k)\ln C(r).
\]
Taking here $r=1$ we obtain that there exists $c>0$ such that for all $k\ge 1$
\[
\mu(k)\ge c k^{m+1}:=c\mu_{{\rm id}}^{m+1}(k).
\]
This and \eqref{eq4.19} show that $\mu_{{\rm id}}^{m+1}$ is the optimal exponent for the Bernstein type inequality on $\Gamma_f$ completing the proof of the theorem in this particular case.\smallskip

We deduce the general case from the one just proved by means of the following result.
\begin{Lm}\label{lem4.1}
Suppose $\Gamma_f\subset\mathbb C^{n+m}$, $f:\mathbb C^n\rightarrow\mathbb C^m$, admits the Bernstein type inequality of exponent $\mu_{{\rm id}}^r$, $r\ge 1$. Let $P$ and $Q$ be holomorphic polynomial automorphisms of $\mathbb C^n$ and $\mathbb C^m$, respectively. Then $f_{P,Q}:=Q\circ f\circ P$ admits the Bernstein type inequality of exponent $\mu_{{\rm id}}^r$ as well. Moreover, if $\mu_{{\rm id}}^r$ is optimal for $\Gamma_f$, then it is optimal for $\Gamma_{f_{P,Q}}$ as well.
\end{Lm}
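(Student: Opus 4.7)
The plan is to transfer the Bernstein type inequality from $\Gamma_f$ to $\Gamma_{f_{P,Q}}$ by rewriting restrictions of polynomials to the latter as restrictions of suitable polynomials to the former, and then to invoke Theorem \ref{theo1}\,(b). For $p\in\mathcal P(\mathbb C^{n+m})$, set
\[
\tilde p(w, u) := p\bigl(P^{-1}(w),\, Q(u)\bigr),\qquad (w,u)\in\mathbb C^n\times\mathbb C^m,
\]
so that $p_{f_{P,Q}}(z) = p\bigl(z, Q(f(P(z)))\bigr) = \tilde p_f\bigl(P(z)\bigr)$ for every $z\in\mathbb C^n$, and $\deg \tilde p \le d\cdot \deg p$ with $d := \max(\deg P^{-1}, \deg Q)$ depending only on $(P, Q)$.

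To verify the criterion of Theorem \ref{theo1}\,(b) for $\Gamma_{f_{P,Q}}$, fix a compact nonpluripolar set $K\subset\mathbb C^n$ and set $K':=P(K)$. The set $K'$ is compact (as a continuous image of $K$) and nonpluripolar, since $P$ is a biholomorphism of $\mathbb C^n$ and pluripolarity is preserved under biholomorphisms. Put $M_P(r):=\max_{\|z\|\le r}\|P(z)\|<\infty$. Applying the Bernstein type inequality for $\Gamma_f$ to $\tilde p$ with the set $K'$ at radius $M_P(r)$, and using $\{P(z):\|z\|\le r\}\subset\{\|w\|\le M_P(r)\}$ together with $\max_{w\in K'}|\tilde p_f(w)|=\max_{z\in K}|p_{f_{P,Q}}(z)|$, we obtain
\[
\max_{\|z\|\le r}|p_{f_{P,Q}}(z)|\le \max_{\|w\|\le M_P(r)}|\tilde p_f(w)|\le C(K';M_P(r))^{(\deg\tilde p)^r}\max_{z\in K}|p_{f_{P,Q}}(z)|.
\]
Using $\deg\tilde p\le d\cdot\deg p$ and absorbing the factor $d^r$ yields the desired inequality with constant $\tilde C(K;r):=C(P(K);M_P(r))^{d^r}$ and exponent $\mu_{{\rm id}}^r$, so Theorem \ref{theo1}\,(b) gives that $\Gamma_{f_{P,Q}}$ admits the Bernstein type inequality of exponent $\mu_{{\rm id}}^r$.

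For optimality, I use the symmetry $f=(f_{P,Q})_{P^{-1},Q^{-1}}$, which follows at once from $Q^{-1}(f_{P,Q}(P^{-1}(z)))=f(z)$. Running the forward argument above with $(f_{P,Q},P^{-1},Q^{-1})$ in place of $(f,P,Q)$, any exponent $\nu$ of $\Gamma_{f_{P,Q}}$ yields the exponent $k\mapsto\nu(d'k)$ for $\Gamma_f$, where $d':=\max(\deg P,\deg Q^{-1})$. Assuming $\nu$ is nondecreasing (one may replace $\nu$ by its nondecreasing envelope, which is still an exponent because the Bernstein-type constants are $\ge 1$), optimality of $\mu_{{\rm id}}^r$ for $\Gamma_f$ yields $k^r\le c\,\nu(d'k)$ for some $c>0$, and hence $\nu(m)\ge\nu\bigl(d'\lfloor m/d'\rfloor\bigr)\ge\lfloor m/d'\rfloor^r/c\ge c'm^r$ for all sufficiently large $m$. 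This means $\langle\mu_{{\rm id}}^r\rangle\le\langle\nu\rangle$ for every $\nu\in\mathscr E_{f_{P,Q}}$, establishing optimality. The main technical delicacy is precisely this degree shift $k\mapsto d'k$: it is harmless for power exponents (which transform into a constant multiple of themselves) but is the reason the forward direction in general only preserves the class $\langle\mu_{{\rm id}}^r\rangle$.
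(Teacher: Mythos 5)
Your forward direction is correct, and it is genuinely more elementary than the paper's argument: the paper runs the transfer through the plurisubharmonic functions $u_{K,\mu}^k$ of Theorem \ref{theo1.3} and concludes via local boundedness of $u_{K,\mu}$, whereas you apply the compact-set criterion of Theorem \ref{theo1}\,(b) directly to $\tilde p(w,u)=p(P^{-1}(w),Q(u))$ with $K'=P(K)$ and absorb the degree inflation $\deg\tilde p\le d\deg p$ into the constant via $(\deg\tilde p)^r\le d^r(\deg p)^r$. That part is a clean alternative.

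The optimality argument, however, has a real gap. You show that for any exponent $\nu$ of $\Gamma_{f_{P,Q}}$, its nondecreasing envelope $\tilde\nu$ (which is again an exponent, as you note) satisfies $\tilde\nu(m)\ge c' m^r$ for large $m$, hence $\langle\mu_{\mathrm{id}}^r\rangle\le\langle\tilde\nu\rangle$. But what optimality requires is $\langle\mu_{\mathrm{id}}^r\rangle\le\langle\nu\rangle$, and $\nu\le\tilde\nu$ gives this inequality in the wrong direction. A priori $\nu$ could satisfy $\nu(m_i)=o(m_i^r)$ along a subsequence $\{m_i\}$ even though at each $m_i$ some smaller index $j\le m_i$ has $\nu(j)\gtrsim m_i^r$, which is all that $\tilde\nu(m_i)\gtrsim m_i^r$ controls. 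The passage to the monotone envelope loses precisely the subsequence information that optimality is about, and nothing in the degree-shift bookkeeping recovers it: the transfer $p\mapsto\tilde p$ does not let you select the degree of $\tilde p$, so you cannot pin $\nu(\deg\tilde p)$ to $\nu$ at a prescribed index. The paper avoids this by working with the subsequence criterion of Theorem \ref{theo1.3}\,(c): the inequality \eqref{eq4.21} carries nonvanishing of $u_{K,\mu_{\mathrm{id}}^r;\bar k}(\cdot;f)$ over to $u_{{\rm cl}(\mathbb B^n),\mu_{\mathrm{id}}^r;\bar k_{s,t}}(\cdot;f_{P,Q})$, and \eqref{eq4.22} then converts this, for an arbitrary subsequence $\bar n$, by comparing $u^{k_j\max(s,t)}$ to $u^{n_j}$ with a bounded factor. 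That mechanism works index by index and is exactly what your argument is missing. To repair your proof you would need either to invoke Theorem \ref{theo1.3}\,(c) (and then you are essentially reproducing the paper's argument), or to show that the degree shift can be made to act on the subsequence where $\nu$ is small, which the present construction does not do.
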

\begin{proof}
By definition, there are some $s, t\in\mathbb N$ such that the coordinates of maps $P^{\pm 1}$ and $Q^{\pm 1}$ are holomorphic polynomials in $\mathcal P_{s}(\mathbb C^n)$ and $\mathcal P_t(\mathbb C^m)$, respectively. Then the correspondence $h(z,w)\mapsto h(P^{-1}(z),Q(w))$, $z\in\mathbb C^n$, $w\in\mathbb C^m$, determines a linear injective map $I:\mathcal P_k(\mathbb C^{n+m})\rightarrow \mathcal P_{k\max(s,t)}(\mathbb C^{n+m})$.
 By $K$ we denote the image of the closure of $\mathbb B^n$ under map $P$, i.e. $K:=P\bigl({\rm cl}(\mathbb B^n)\bigr)$.
Since $P$ is a holomorphic automorphism of $\mathbb C^n$, $K$ is a nonpluripolar compact subset of $\mathbb C^n$.
By definition, cf. Theorem \ref{theo1.3},
\[
\begin{array}{l}
\displaystyle
u^{k}_{{\rm cl}(\mathbb B^n),\,\mu_{{\rm id}}^r}(z;f_{P,Q})=
\sup\left\{\frac{\ln |h_{f_{P,Q}}(z)|}{\max\bigl(1,k^r\bigr)}\, :\, h\in\mathcal P(\mathbb C^{n+m}),\ {\rm deg}\,h= k,\ \sup_{\mathbb B^n}|h_{f_{P,Q}}|=1\right\}\\
\\
\displaystyle =\sup\left\{\frac{\ln |\bigl(I(h)\bigr)_{f}(P(z))|}{\max\bigl(1,k^r\bigr)}\, :\, h\in\mathcal P(\mathbb C^{n+m}),\ {\rm deg}\,h= k,\ \sup_{K}|\bigl(I(h)\bigr)_{f}|=1\right\}\\
\\
\displaystyle \le \sup\left\{\frac{\ln |g_{f}(P(z))|}{\max\bigl(1,k^r\bigr)}\, :\, g\in\mathcal P(\mathbb C^{n+m}),\ {\rm deg}\,g= k\max(s,t),\ \sup_{K}|g_{f}|=1\right\}\\
\\
\displaystyle \le\bigl(\max(s,t)\bigr)^r u_{K,\,\mu_{{\rm id}}^r}^{k\max(s,t)}(P(z);f),\qquad z\in\mathbb C^n.
\end{array}
\]
This yields\smallskip
\begin{equation}\label{eq4.20}
\begin{array}{r}
\displaystyle
u_{{\rm cl}(\mathbb B^n),\,\mu_{{\rm id}}^r}(z;f_{P,Q}):=\varlimsup_{k\rightarrow\infty}u^{k}_{{\rm cl}(\mathbb B^n),\,\mu_{{\rm id}}^r}(z; f_{P,Q}) \le \bigl(\max(s,t)\bigr)^r\varlimsup_{k\rightarrow\infty}u_{K,\,\mu_{{\rm id}}^r}^{k\max(s,t)}(P(z);f)\quad\medskip\\
\displaystyle \le \bigl(\max(s,t)\bigr)^r u_{K,\,\mu_{{\rm id}}^r} (P(z);f),\quad z\in\mathbb C^n.
\end{array}
\end{equation}
Since the function  $u_{K,\,\mu_{{\rm id}}^r}(\cdot\, ; f)$ is locally bounded from above by Theorem \ref{theo1.3}\,(b),  the latter inequality implies that the function $u_{{\rm cl}(\mathbb B^n),\,\mu_{{\rm id}}^r}(\cdot\, ;f_{P,Q})$ is locally bounded from above as well. Then by Theorem \ref{theo1.3}\,(b) graph $\Gamma_{f_{P,Q}}$ admits the Bernstein type inequality of exponent $\mu_{{\rm id}}^r$.

Further, suppose that $\mu_{{\rm id}}^r$ is optimal for $\Gamma_f$. Let $\bar k=\{k_j\}_{j\in\mathbb N}\subset\mathbb N$ be a subsequence. Applying the arguments similar to the above one to functions
$f_{P,Q}=Q\circ f\circ P$ and automorphisms $P^{-1}, Q^{-1}$ instead of $f$ and $P, Q$ as in the hypothesis of the lemma, we get  for $\bar{k}_{s,t}:=\{k_j \max(s,t)\}_{j\in\mathbb N}$ (cf. \eqref{eq4.20})
\begin{equation}\label{eq4.21}
\begin{array}{l}
\displaystyle
u_{K,\,\mu_{{\rm id}}^r;\bar k}(z;f):=\varlimsup_{j\rightarrow\infty}u^{k_j}_{K,\,\mu_{{\rm id}}^r}(z; f) \le \bigl(\max(s,t)\bigr)^r\varlimsup_{j\rightarrow\infty}u_{{\rm cl}(\mathbb B^n),\,\mu_{{\rm id}}^r}^{k_j\max(s,t)}(P^{-1}(z);f_{P,Q})\medskip\\
\displaystyle \qquad\qquad\qquad\ = \bigl(\max(s,t)\bigr)^r u_{{\rm cl}(\mathbb B^n),\,\mu_{{\rm id}}^r;\bar {k}_{s,t}} (P^{-1}(z);f_{P,Q}),\quad  z\in\mathbb C^n.
\end{array}
\end{equation}
Since $\mu_{{\rm id}}^r$ is optimal for $\Gamma_f$, Theorem \ref{theo1.3}\,(c) implies that $u_{K,\,\mu_{{\rm id}}^r;\bar k}(\cdot\,; f)\not\equiv 0$. Hence, equation \eqref{eq4.21} shows that $u_{{\rm cl}(\mathbb B^n),\,\mu_{{\rm id}}^r;\bar {k}_{s,t}} (\cdot\, ;f_{P,Q})\not\equiv 0$ as well (recall that all functions in \eqref{eq4.21} are nonnegative). 

Let us check a similar statement for an arbitrary sequence $\bar n=\{n_j\}_{j\in\mathbb N}\subset\mathbb N$. 

We set 
\[
\bar k=\{k_j\}_{j\in\mathbb N},\quad k_j:=\left\lfloor\frac{n_j}{\max(s,t)}\right\rfloor,\quad j\in\mathbb N.
\]
Then, by the definition of $u_{{\rm cl}(\mathbb B^n),\,\mu_{{\rm id}}^r}^k(\cdot\, ; f_{P,Q})$,
\[
u_{{\rm cl}(\mathbb B^n),\,\mu_{{\rm id}}^r}^{k_j\max(s,t)}(z ; f_{P,Q})\le \frac{\max(1,n_j^r)}{\max(1, (k_j\max(s,t))^r)} u_{{\rm cl}(\mathbb B^n),\,\mu_{{\rm id}}^r}^{n_j}(z ; f_{P,Q})\le 2^r u_{{\rm cl}(\mathbb B^n),\,\mu_{{\rm id}}^r}^{n_j}(z ; f_{P,Q}).
\]
This yields
\begin{equation}\label{eq4.22}
u_{{\rm cl}(\mathbb B^n),\,\mu_{{\rm id}}^r;\bar {k}_{s,t}} (z ;f_{P,Q})\le 2^r u_{{\rm cl}(\mathbb B^n),\,\mu_{{\rm id}}^r;\bar {n}} (z ;f_{P,Q}).
\end{equation}
Since  $u_{{\rm cl}(\mathbb B^n),\,\mu_{{\rm id}}^r;\bar {k}_{s,t}} (\cdot\, ;f_{P,Q})\not\equiv 0$, the latter shows that  $u_{{\rm cl}(\mathbb B^n),\,\mu_{{\rm id}}^r;\bar {n}} (\cdot\, ;f_{P,Q})\not\equiv 0$ as well. Thus, Theorem \ref{theo1.3}\,(c) implies that the exponent $\mu_{{\rm id}}^r$ is optimal for $\Gamma_{f_{P,Q}}$. 

The proof of the lemma is complete.
\end{proof}
Now, if $f_j:\mathbb C\rightarrow\mathbb C^{m_j}$, $1\le j\le l$, are exponential maps of maximal transcendence degrees, then due to Theorem \ref{theo1}\,(f) and the above considered case of $l=1$ and $P,Q$ the identity automorphisms, the graph of $f_1\times\cdots\times f_l:\mathbb C^l\rightarrow\mathbb C^{m_1+\cdots +m_l}$ satisfies the Bernstein type inequality of exponent $\mu_{{\rm id}}^{\bar m +1}$, $\bar m=\max_{1\le j\le l} m_j$. Hence, by Lemma \ref{lem4.1} graph $\Gamma_{F_{P,Q}}$, $F_{P,Q}:=Q\circ (f_1\times\cdots\times f_l)\circ P$, satisfies the Bernstein type inequality of exponent $\mu_{{\rm id}}^{\bar m +1}$ as well.
Since, by Theorem \ref{theo1}\,(f), $\mu_{{\rm id}}^{\bar m+1}$ is optimal for $f_1\times\cdots\times f_l$, Lemma \ref{lem4.1} implies that $\mu_{{\rm id}}^{\bar m +1}$ is optimal for $\Gamma_{F_{P,Q}}$.

This completes the proof of the theorem.
\end{proof}
\section{Proof of Theorem \ref{theo1.5}}
According to \cite[Th.\,2.5\,(c)]{B} there exist increasing sequences $\{n_j\}_{j\in\mathbb N}\subset\mathbb N$, $\{r_j\}_{j\in\mathbb N}\subset\mathbb R_+$ converging to $\infty$ and a nonincreasing sequence $\{\epsilon_j\}\subset\mathbb R_+$ converging to $0$ such that for all $g\in\mathcal P_{n_j}(\mathbb C^{n+1})$ and all $0\le r\le r_j$,
\begin{equation}\label{eq5.24}
M_{g_f}(er)\le e^{C_{\rho_f} n_j^{2+\epsilon_j}} M_{g_f}(r),
\end{equation} 
for some constant $C_{\rho_f}>0$ depending on the order of  $f$ only. 

Without loss of generality we assume that $r_1=0$ and $n_1=1$

For $j\in\mathbb N$, $r\in [r_j, r_{j+1})$ we set
\[
C_j(r):=\sup\left\{\frac{M_{g_f}(er)}{M_{g_f}(r)}\, :\, g\in \mathcal P_{n_{j}}(\mathbb C^{n+1})\setminus\{0\}\right\}.
\]
Since $f$ is nonpolynomial and the space $\mathcal P_{n_{j-1}}(\mathbb C^{n+1})$ is finite dimensional, each $C_j(r)<\infty$.  
We define 
\begin{equation}\label{eq5.25}
C(r):=\max\bigl(C_j(r),e^{C_{\rho_f}}\bigr)\quad {\rm for}\quad r_j\le r< r_{j+1},\quad j\in\mathbb N.
\end{equation}
\begin{Lm}\label{lem5.1}
For all $g\in\mathcal P_{n_j}(\mathbb C^{n+1})$, $j\in\mathbb N$, and all $r>0$ 
\begin{equation}\label{eq5.26}
M_{g_f}(er)\le C(r)^{\, n_j^{2+\epsilon_j}}M_{g_f}(r).
\end{equation}
\end{Lm}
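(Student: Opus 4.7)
The plan is a straightforward case split on the position of $r$ relative to the breakpoints $\{r_k\}$. Fix $j \in \mathbb{N}$ and $g \in \mathcal{P}_{n_j}(\mathbb{C}^{n+1})$; we may of course assume $g_f \not\equiv 0$ (the zero case is trivial). Note that by construction $C(r) \ge e^{C_{\rho_f}} > 1$ for every $r > 0$, and $n_j^{2+\epsilon_j} \ge 1$, so in both cases it will suffice to prove the estimate with $C(r)^{n_j^{2+\epsilon_j}}$ replaced by the naturally occurring smaller quantity.

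\textbf{Case 1: $r \le r_j$.} Here the inequality \eqref{eq5.24} applies directly, yielding
\[
M_{g_f}(er) \le e^{C_{\rho_f}\, n_j^{2+\epsilon_j}}\, M_{g_f}(r).
\]
Since $C(r) \ge e^{C_{\rho_f}}$ by \eqref{eq5.25} and $n_j^{2+\epsilon_j} \ge 1$, we have $e^{C_{\rho_f}\, n_j^{2+\epsilon_j}} \le C(r)^{n_j^{2+\epsilon_j}}$, giving \eqref{eq5.26}.

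\textbf{Case 2: $r > r_j$.} Then there is a unique $k \ge j$ with $r \in [r_k, r_{k+1})$. Since $n_j \le n_k$, we have the inclusion $\mathcal{P}_{n_j}(\mathbb{C}^{n+1}) \subseteq \mathcal{P}_{n_k}(\mathbb{C}^{n+1})$, so $g$ belongs to the class over which $C_k(r)$ is defined. By the very definition of $C_k(r)$,
\[
M_{g_f}(er) \le C_k(r)\, M_{g_f}(r) \le C(r)\, M_{g_f}(r).
\]
Using once more that $C(r) \ge 1$ and $n_j^{2+\epsilon_j} \ge 1$, this upgrades to $M_{g_f}(er) \le C(r)^{n_j^{2+\epsilon_j}} M_{g_f}(r)$, as required.

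There is no genuine obstacle here: the delicate work has been done in invoking \cite[Th.\,2.5\,(c)]{B} to produce \eqref{eq5.24}, and the only role of the lemma is to glue that polynomial-in-$n_j$ estimate on the ``controlled range'' $r \le r_j$ with a trivial finite-dimensional estimate on the ``large range'' $r > r_j$ into a single inequality valid for all $r > 0$, at the cost of replacing the explicit exponential constant by the data-dependent $C(r)$. The definition \eqref{eq5.25} was tailored precisely so that both cases collapse to the same bound.
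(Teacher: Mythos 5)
Your proof is correct and takes essentially the same approach as the paper's: the same two-case split (into the range covered by the cited estimate \eqref{eq5.24} and the range where the finite-dimensionality bound $C_k(r)$ takes over), with the definition \eqref{eq5.25} designed to absorb both cases. You are merely slightly more explicit than the paper in spelling out why $C(r)\ge e^{C_{\rho_f}}>1$ and $n_j^{2+\epsilon_j}\ge 1$ make the upgrade to the common bound $C(r)^{n_j^{2+\epsilon_j}}$ legitimate.
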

\begin{proof}
We consider two cases.

(1) If $0\le r< r_{j}$, then \eqref{eq5.26} follows from \eqref{eq5.24}.\smallskip

(2) If $r_{k}\le r< r_{k+1}$ for some $k\ge j$, then 
\[
M_{g_f}(er)\le C_k(r) M_{g_f}(r)\le C(r)^{\, n_j^{2+\epsilon_j}} M_{g_f}(r)
\]
by the definition of $C(r)$.

The proof of the lemma is complete.
\end{proof}

Now, let $\nu$ be an exponent in the Bernstein type inequality on $\Gamma_f$ (existing by Theorem \ref{theo1}\,(a)). We define a function $\mu:\mathbb Z_+\rightarrow\mathbb R_+$ by the formula
\[
\mu(k)=\left\{
\begin{array}{ccc}
\nu(k)&{\rm if}&k\not\in\{n_j\}_{j\in\mathbb N}\medskip\\
n_j^{2+\epsilon_j}&{\rm if}&k=n_j,\, j\in\mathbb N.
\end{array}
\right.
\]
Lemma \ref{lem5.1} and the definition of the exponent $\nu$ easily imply that $\Gamma_f$ satisfies the Bernstein type inequality of exponent $\mu$. Note that
\[
\varliminf_{k\rightarrow\infty}\frac{\ln\mu(k)}{\ln k}\le \varliminf_{j\rightarrow\infty}\frac{(2+\epsilon_j)\cdot\ln n_j}{\ln n_j}=2.
\]
This gives the right-hand side inequality of the theorem.
The left-hand side inequality,
\[
1+\frac 1n \le \varliminf_{k\rightarrow\infty}\frac{\ln\mu(k)}{\ln k},
\]
follows from Theorem \ref{theo1}\,(c).

The proof of the theorem is complete.

\section{Proofs of Theorems \ref{theo1.7} and \ref{theo1.8}}
\subsection{Theorem A} In this part we discuss an auxiliary result used in the proofs of the theorems.
For its formulation, we require some definitions. 

In what follows, for each $l\in\mathcal L_n$, the family of complex lines  passing through  $0\in\mathbb C^n$, we naturally identify $l\cap\mathbb B_r^n$ with $\mathbb D_r$.

 Assume that $f:\mathbb D_r\rightarrow\mathbb C$ is holomorphic. 
The {\em Bernstein index} $b_f$ of $f$ is given by the formula
\[
b_f(r):=\sup\{m_f(es)-m_f(s)\},
\]
where the supremum is taken over all $\mathbb D_{es}\Subset\mathbb D_r$. (We assume that $b_f(\cdot)=0$ for $f=0$.) The index is %the values of $n_f$ , $v_f$ and $b_f$ are 
finite for all $f$ defined in  neighbourhoods of the closure of $\mathbb D_r$. 
%n_f(r):=n_f(r,0),\quad v_f(r):=v_f(r,0),\quad

Let $g$ be a holomorphic function in the domain $\mathbb B_{tr}^n\times \mathbb D_{3M}\subset\mathbb C^{n+1}$, $r\in\mathbb R_+$, $t\in [1,9]$, $M\in\mathbb R_+\cup\{\infty\}$ (here $\mathbb D_\infty:=\mathbb C$). For every $l\in\mathcal L_n$ we determine
\[
g_l:=g|_{\Omega_l},\qquad \Omega_l:=(l\cap\mathbb B_{tr}^n)\times \mathbb D_{3M}.
\]

We write $g\in {\mathcal F}_{p,q}(r;t;M)$ for some $p,q\ge 0$ if
\begin{equation}
\begin{array}{l}
\displaystyle
M_{g_l(\cdot,w)}(tr)\le e^p\cdot M_{g_l(\cdot,w)}(r)\quad {\rm for\ all}\quad l\in\mathcal L_n,\ w\in\mathbb D_{3M}\quad {\rm and}\medskip\\
b_{g(z,\cdot)}(3M)\le q\quad {\rm for\ all}\quad z\in\mathbb B_{tr}^n .
\end{array}
\end{equation}
\begin{E}
{\rm One can easily check by means of the classical Bernstein inequality that a holomorphic polynomial of degree $d$ on $\mathbb C^{n+1}$ is in $\mathcal F_{p,q}(r ; t ; M)$ with $p = d \ln t$ and $q = d$. See \cite{B} for other examples.}
\end{E} 

\begin{Teo}[\mbox{cf. \cite[Theorem~2.8]{B}}]\label{teoA}
Assume that $f$ is of class $\mathscr C$. Then there exist numbers $k_0,r_0\ge 1$, a continuous increasing to $\infty$ function $r: [k_0,\infty)\rightarrow [r_0,\infty)$ and a continuous decreasing to $0$ function $\varepsilon : [k_0,\infty)\rightarrow\mathbb R_+$ such that for all $k\ge k_0$, $r(k)\ge r_0$, every $g\in\mathcal F_{p,q}(er(k);e;M_f(e^2r(k)))$ with $p\le k$ and every $0< r\le r(k)$ the following inequalities hold for $g_f:=g(\cdot, f(\cdot))$:
\begin{itemize}
\item[(a)]
\[
\sup_{\mathbb B^n\times\mathbb D}|g|\le e^{Ck^{1+\varepsilon(k)}\ln r(k)\max\{p,q\}}M_{g_f}(1);
\]
%\item[(b)]
%\[
%M_{g_f}(r)\le e^{Ck^{1+\varepsilon(k)}\ln r\max\{p,q\}}M_{g_f}(1);
%\]
\item[(b)]
\[
\frac{M_{g_f}(er)}{M_{g_f}(r)}\le e^{Ck^{1+\varepsilon(k)}\max\{p,q\}}.
\]
\end{itemize}
Here for $\rho_f<\infty$ the constant $C$ depends on the value of the limit superior of condition \eqref{cI} and $\rho_f$, and for $\rho_f=\infty$ the constant $C=1$.

Moreover, 
\begin{itemize}
\item[(1)]
If $\rho_f<\infty$, then $\varepsilon=0$ and  function $r$ is the right inverse of the nondecreasing function $k:[r_0,\infty)\rightarrow\mathbb R_+$,
\begin{equation}\label{equ6.32}
k(r):=\frac{m_f(e^{-\alpha_{\rho_f}}r)- m_f(e^{-2\alpha_{\rho_f}}r)-1}{9(\sqrt e +1)^2(\rho_f^2+1)(17+2\ln(\rho_f+1))},
\end{equation}
where $\alpha_{\rho_f}:=\min\bigl(1,\ln\bigl(1+\frac{1}{\rho_f}\bigr)\bigr)$.
%$\varepsilon\equiv 0$ and $r(k)\ge k^{1/(\rho+\varepsilon'(k))}$, $k\ge k_0$, for some continuous function $\varepsilon': [k_0,\infty)\rightarrow\mathbb R_+$, decreasing to $0$ as $k\rightarrow\infty$.
%\item[(2)]
%If $0<\rho<\infty$, then $r(k)\le c k^{1/\rho}$, $k\ge k_0$, for some $c$ depending on $A$ and $\rho$.
\item[(2)]
If $\rho=\infty$, then $r(k)=\frac{1}{e^2}m_f^{-1}(k^{1+\varepsilon''(k)})\le e^{k^{\delta(k)}}$, $k\ge k_0$, for some continuous functions $\varepsilon'',\delta : [k_0,\infty)\rightarrow\mathbb R_+$ decreasing to $0$ as $k\rightarrow\infty$.
\end{itemize}
\end{Teo}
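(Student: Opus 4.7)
The plan is to adapt the iterative argument of \cite[Theorem~2.8]{B} to the two-part definition of class $\mathscr{C}$. The core of that scheme is as follows: given $g\in\mathcal{F}_{p,q}(er;e;M)$, one bounds $M_{g_f}(er)/M_{g_f}(r)$ by applying the fiber Bernstein estimate (controlled by $q$) on a slightly enlarged disc in the last coordinate and then combining with the growth of $f$ on balls of radius proportional to $M$. Iterating this inequality across nearby scales produces a bound whose exponent is dominated by ratios of $\phi_f$ at nearby values of $t$, and the role of the class-$\mathscr{C}$ hypothesis is exactly to give a usable control of this ratio, supplied by \eqref{cI} in one case and \eqref{cII} in the other. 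Because $f$ has been promoted here from the setup of \cite{B} to a more refined growth class, it is natural to split into the two orders.

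In the finite-order case (I), let $L$ denote the value of the limsup in \eqref{cI}. I would fix $r_0\ge 1$ so that the function $k(\cdot)$ in \eqref{equ6.32} is positive and nondecreasing on $[r_0,\infty)$, and take $r(k)$ to be its right inverse. The denominator in \eqref{equ6.32} is calibrated to match the constants arising in the fiber-Bernstein step, while the numerator $m_f(e^{-\alpha_{\rho_f}}r)-m_f(e^{-2\alpha_{\rho_f}}r)$ is precisely a $\phi_f$-increment at shift $\alpha_{\rho_f}$. Condition \eqref{cI} then guarantees that such increments at consecutive scales stay within a factor of $L$ of each other, which is what closes the iteration from \cite[Theorem~2.8]{B} with constants depending only on $L$ and $\rho_f$ and with $\varepsilon\equiv 0$. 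I would essentially transcribe that argument, replacing the ad hoc comparison steps by direct appeals to \eqref{cI}.

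In the infinite-order case (II), condition \eqref{cI} fails, but the auxiliary convex function $\psi$ in \eqref{cII} serves as a smoothed proxy for $\phi_f$ for which $\ln\psi(t+1)/\ln\psi(t)$ stays near $1$, at the cost of an additive error that tends to zero. Using Lemma~\ref{lemma7.2} to see that $\phi_f$ grows superquadratically, I would set $r(k):=e^{-2}m_f^{-1}(k^{1+\varepsilon''(k)})$, with $\varepsilon''(k)\to 0$ extracted from the rate of decay in \eqref{cII}, and let $\varepsilon(k)$ absorb the loss from substituting $\psi$ for $\phi_f$ through the iteration. The estimate $r(k)\le e^{k^{\delta(k)}}$ with $\delta(k)\to 0$ follows from the superquadratic lower bound on $\phi_f$, and the constant $C=1$ suffices because here the growth of $f$ itself dominates any fixed multiplicative constant arising in the inductive step.

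The main obstacle will be the bookkeeping in case (II): producing explicit, monotone decreasing schedules $\varepsilon(k),\varepsilon''(k),\delta(k)$ that are compatible with one another and make both (a) and (b) hold uniformly over all $g\in\mathcal{F}_{p,q}(er(k);e;M_f(e^2r(k)))$ with $p\le k$. This amounts to matching the modulus of continuity of $(\ln\psi)'$ near infinity with the growth of $m_f^{-1}$, and verifying that the composed rates are in fact monotone and tend to zero rather than merely remain bounded. Once that is arranged, the estimates (a) and (b) are obtained from the \cite{B} iteration essentially verbatim, with part (a) following from part (b) by the standard telescoping from radius $1$ out to $r(k)$ combined with the Cauchy estimate for the $\mathbb{D}$-direction coming from the $q$-bound.
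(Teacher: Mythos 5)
Your proposal follows essentially the same two-part strategy as the paper: reduce the finite-order case to \cite[Theorem~2.8]{B}, and for infinite order rerun that theorem's scheme with the auxiliary convex function $\psi$ standing in for $\phi_f$, tracking the correction factor $\kappa(v)=\ln\psi(v)/\ln\phi_f(v)\to 1$ through the iteration. The choice of $r(k)$, the role of Lemma~\ref{lemma7.2}, the extraction of $\varepsilon''$ and $\delta$ from the decay rate in \eqref{cII}, and the derivation of~(a) from~(b) by telescoping plus the Cauchy estimate in the fiber are all in line with what the paper does.

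There is, however, one concrete step you elide in the finite-order case. Condition~\eqref{cI} controls \emph{unit-step} increment ratios of $\phi_f$, whereas the hypothesis under which \cite[Theorem~2.8]{B} is actually established — and the numerator of the expression~\eqref{equ6.32} defining $k(r)$ — involve increments at step size $\alpha_{\rho_f}=\min\bigl(1,\ln\bigl(1+\tfrac{1}{\rho_f}\bigr)\bigr)$, which is typically much smaller than $1$ when $\rho_f$ is large. You assert that \eqref{cI} "guarantees that such increments at consecutive scales stay within a factor of $L$ of each other," but that transfer between step sizes is not automatic. The paper supplies it explicitly by a two-step convexity argument (the chain of inequalities in \eqref{equ6.35}), bounding the $\alpha_{\rho_f}$-step ratio by a constant times the product of two unit-step ratios at $t$ and at $s=t-\alpha_{\rho_f}$, whence the bound by a constant depending only on the limsup in \eqref{cI}. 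Without that bridge you cannot invoke \cite[Theorem~2.8]{B}; once you add it, your argument for $\rho_f<\infty$ closes. A second, smaller point you do not mention: the paper notes that \cite[Theorem~2.8]{B} is nominally proved under a hypothesis (their \eqref{equ6.33}) containing an extra summand used only to give an effective bound on $r$, and that the core estimate only requires the weaker form \eqref{equ6.34}; this observation is what permits citing the result rather than re-deriving it. Neither point affects the infinite-order part of your plan, which matches the paper's sketch.
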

\begin{proof}
For $\rho_f<\infty$ the statement of the theorem is the direct consequence of Theorem 2.8 of \cite{B}.  The latter is proved under the assumption
\begin{equation}\label{equ6.33}
\varlimsup_{t\rightarrow\infty}\left(\frac{\phi_f(t+\alpha_{\rho_f})-\phi_f(t-\alpha_{\rho_f})}{\phi_f(t-\alpha_{\rho_f})-\phi_f(t-2\alpha_{\rho_f})}+\frac{\rho_fe^{\rho_f t}}{\phi_f(t-\alpha_{\rho_f})-\phi_f(t-2\alpha_{\rho_f})}\right)<A<\infty,
\end{equation}
where the second summand is included only to give an effective upper bound of function $r$
(see \cite[Eq.\,(9.6)]{B}). In particular, in this case the arguments of the proof of \cite[Th.\,2.8]{B} imply that all statements of Theorem A are valid under the assumption
\begin{equation}\label{equ6.34}
\varlimsup_{t\rightarrow\infty}\frac{\phi_f(t+\alpha_{\rho_f})-\phi_f(t-\alpha_{\rho_f})}{\phi_f(t-\alpha_{\rho_f})-\phi_f(t-2\alpha_{\rho_f})}<A<\infty.
\end{equation}
Note that since $\alpha_{\rho_f}\le 1$, using that $\phi_f$ is a convex increasing function we obtain for $s:=t-\alpha_{\rho_f}$,
\begin{equation}\label{equ6.35}
\begin{array}{l}
\displaystyle
\frac{\phi_f(t+\alpha_{\rho_f})-\phi_f(t-\alpha_{\rho_f})}{\phi_f(t-\alpha_{\rho_f})-\phi_f(t-2\alpha_{\rho_f})}\le
\frac{\frac 12 \big(\phi_f(t+1)-\phi_f(t)\bigr)}{\phi_f(s)-\phi_f(s-1)}\medskip\\
\displaystyle = \frac{1}{2}\frac{\phi_f(t+1)-\phi_f(t)}{\phi_f(t)-\phi_f(t-1)}\cdot\frac{\phi_f(s+1)-\phi_f(s)}{\phi_f(s)-\phi_f(s-1)}\cdot\frac{\phi_f(t)-\phi_f(t-1)}{\phi_f(s+1)-\phi_f(s)}\medskip\\
\displaystyle \le \frac{1}{2}\frac{\phi_f(t+1)-\phi_f(t)}{\phi_f(t)-\phi_f(t-1)}\cdot\frac{\phi_f(s+1)-\phi_f(s)}{\phi_f(s)-\phi_f(s-1)}.
\end{array}
\end{equation}
Therefore if $f\in\mathscr C$ and satisfies (cf. \eqref{cI})
\[
\varlimsup_{t\rightarrow\infty}\frac{\phi_f(t+1)-\phi_f(t)}{\phi_f(t)-\phi_f(t-1)}<C,
\]
then due to \eqref{equ6.35} inequality \eqref{equ6.34} is valid with $A:=\frac{C^2}{2}$. This implication and \cite[Th.\,2.8]{B} show that Theorem A is valid for $f\in\mathscr C$ with $\rho_f<\infty$.\smallskip

Now, let us consider the case of $f\in\mathscr C$ with $\rho_f=\infty$. In this instance, the required result does not follow straightforwardly  from Theorem 2.8 of \cite{B} as it is proved under a weaker condition of $\psi=\phi_f$ in \eqref{cII}. 
To prove an analogous result (and therefore our Theorem A) in the general case, one follows the lines of the proof of Theorem 2.8. We just sketch the corresponding arguments leaving the details to the reader.

First, observe that condition (II) in the definition of class $\mathscr C$ implies that
there exists some $v_*\in\mathbb R$ and a continuous function $\kappa : [v_*,\infty)\rightarrow \mathbb R_+$, $\lim_{v\rightarrow\infty}\kappa(v)=1$, such that
\begin{equation}\label{condII}
\ln\psi(v)=\kappa(v)\ln\phi_f(v),\qquad v\ge v_*.
\end{equation}

Now, for each sufficiently large $v\in\mathbb R$ by $s(v)\in\mathbb R_+$ we denote a number such that
\begin{equation}\label{equ6.36}
\frac{\psi(v)}{\psi(v-2s(v))}=e.
\end{equation}
Since $\psi$ is a continuous increasing function,
\[
s(v)=\frac 12 \left(v-\psi^{-1}\bigl(\mbox{$\frac{\psi(v)}{e}$}\bigr)\right),
\]
i.e. $s$ is a continuous in $v$ function. Now, as in the proof of Theorem 2.8 (which uses only monotonicity and convexity of  $\phi_f$) we obtain for $\tilde s(v):=\min(s(v),1)$
\[
\frac{1}{s(v)}\le\frac{2e\psi'(v)}{\psi(v)}
\]
and, cf. \eqref{condII},
\begin{equation}\label{equ6.37}
\frac{1}{\tilde s(v)}\le (\psi(v))^{\frac{\varepsilon(v)}{v}}=(m_f(e^v))^{\frac{\kappa(v)\varepsilon(v)}{v}},\qquad v\ge v_0,
\end{equation}
where $\varepsilon$ is a positive continuous function in $v$ tending to $0$ as $v\rightarrow\infty$ and $v_0 \in \mathbb R$ is sufficiently large.

Next, we determine continuous in $v$ functions
\[
t(v):=e^{\tilde s(v)},\qquad \tilde r(v):=e^{v-\tilde s(v)}.
\]
Then as in the proof of Theorem 2.8 using properties of $\psi$ we obtain (cf. \cite[Eq.\,(8.36)]{B}),
\begin{equation}\label{equ6.38}
\begin{array}{l}
\displaystyle
\ln\tilde r(v)\le \varepsilon'(v)[\ln \psi(v )]^2\le \varepsilon'(v)(\kappa(v))^2[\ln\phi_f(v)]^2\medskip\\
\displaystyle
\quad\qquad = \varepsilon'(v)(\kappa(v))^2 [\ln(m_f(t(v)(\tilde r(v))]^2,\qquad v\ge v_0,
\end{array}
\end{equation}
where $\varepsilon'$ is a positive continuous function in $v$ tending to $0$ as $v\rightarrow\infty$.

Using \eqref{equ6.37} and \eqref{equ6.38} as in \cite[Lm.\,8.5]{B} we have, for all sufficiently large $v$,
\begin{equation}\label{equ6.40}
k(t(v),\tilde r(v))\leq\varepsilon''(v)(\ln m_f(t(v)\tilde r(v)))^2,
\end{equation}
where $\varepsilon''$ is a positive continuous function in $v$ tending to $0$ as $v\rightarrow\infty$, and for such $v$
\begin{equation}\label{equ6.41}
\frac{1}{\ln\left(\frac{1+t(v)}{2\sqrt{t(v)}}\right)}\le 64 \bigl(m_f(t(v)\tilde r(v))\bigr)^{\frac{2\kappa(v)\varepsilon(v)}{v}}.
\end{equation}
Also, for all sufficiently large $v$ using \eqref{condII} and \eqref{equ6.36} we get
(cf. \cite[Eq.\,(8.32)]{B})
\begin{equation}\label{equ6.42}
\begin{array}{l}
\displaystyle
N_f(\tilde r(v), t(v))\ge \frac{\ln\left(\frac{ M_f\left(\frac{\tilde r(v)}{t(v)}\right)}{\sqrt{e}M_f(1)}\right)}{k(t(v),\tilde r(v))}\ge \frac{\left(\psi(v-2\tilde s(v))\right)^{\frac{1}{\kappa(v-2\tilde s(v))}}-\phi_f(0)-\frac 12}{k(t(v),\tilde r(v))}\\
\\
\displaystyle\ge \frac{\bigl(\frac{m_f(\tilde r(v)t(v))}{e}\bigr)^{\frac{\kappa(v)}{\kappa(v-2\tilde s(v))}}-\phi_f(0)-\frac 12}{k(t(v),\tilde r(v))}\ge \frac{(m_f(\tilde r(v)t(v)))^{1-\kappa'(v)}}{k(t(v),\tilde r(v))}, 
\end{array}
\end{equation}
where $\kappa'$ is a positive continuous function in $v$ tending to $0$ as $v\rightarrow\infty$.

Equations \eqref{equ6.40}--\eqref{equ6.42} imply that for all sufficiently large $v\ge v_0$,
\[   
\ln\left(\frac{1+t(v)}{2\sqrt{t(v)}}\right)N_f(\tilde r(v),t(v))\ge (m_f(\tilde r(v)t(v))^{1-\delta(v)}=:k(v)
\]
for some positive continuous function $\delta(v)$ tending to $0$ as $v\rightarrow\infty$. 

Thus, we can proceed as in the proof of Theorem 2.8  of \cite{B} observing that \eqref{equ6.37} gives estimates for $a_1$ and $a_2$ similar to those of the theorem  (cf. \cite[Eq.\,(8.29)]{B}).
\end{proof}

\begin{proof}[{\bf 6.2. Proof of Theorem \ref{theo1.7}}]
For $m=1$ and $P,Q$ the identity automorphisms the required result follows from Theorem A\,(b) of  the previous section by repeating word-for word the arguments of the proof of Theorem \ref{theo1.5} in section 5 (cf. Lemma \ref{lem5.1}).

Next, if $f_j:\mathbb C^{n_j}\rightarrow\mathbb C$, $1\le j\le m$, are of class $\mathscr C$, then due to Theorem \ref{theo1}\,(f) and the above considered case (of $m=1$ and $P,Q$ the identity automorphisms), the graph of $F:=(f_1\times\cdots\times f_m):\mathbb C^{\bar n}\rightarrow\mathbb C^{m}$, $\bar n:=n_1+\cdots +n_m$, satisfies the Bernstein type inequality of exponent $\mu(k)=k^{2+\varepsilon(k)}$, $k\in\mathbb Z_+$, with $\varepsilon$ as in the statement of the theorem. So that for general $P,Q$ the required result follows from the arguments similar to those of Lemma \ref{lem4.1}. 

Specifically, let
$s,t\in\mathbb N$ be such that all coordinates of holomorphic maps $P^{\pm 1}$ and $Q^{\pm 1}$ belong to $\mathcal P_s(\mathbb C^{\bar n})$ and $\mathcal P_t(\mathbb C^m)$, respectively. Then the correspondence $h(z,w)\mapsto h(P^{-1}(z),Q(w))$, $z\in\mathbb C^{\bar n}$, $w\in\mathbb C^m$, determines a linear injective map 
$I:\mathcal P_k(\mathbb C^{\bar n+m})\rightarrow \mathcal P_{k\max(s,t)}(\mathbb C^{\bar n+m})$.
We set
$K:=P\bigl({\rm cl}(\mathbb B^{\bar n})\bigr)$.
Since $P$ is a holomorphic automorphism of $\mathbb C^{\bar n}$, $K$ is a nonpluripolar compact subset of $\mathbb C^{\bar n}$. Then as in the proof of Lemma \ref{lem4.1}  for $\mu(k):=k^{2+\varepsilon(k)}$, $k\in\mathbb Z_+$, with $\varepsilon$  as in the statement of Theorem \ref{theo1.7} and $F_{P,Q}:=Q\circ F\circ P$ we obtain\smallskip
\[
\begin{array}{lr}
\displaystyle
 u^{k}_{{\rm cl}(\mathbb B^{\bar n}),\,\mu}(z;F_{P,Q})\medskip \\
\displaystyle \qquad\qquad\qquad =
\sup\left\{\frac{\ln |h_{F_{P,Q}}(z)|}{\max\bigl(1,k^{2+\varepsilon(k)}\bigr)}\, :\, h\in\mathcal P(\mathbb C^{\bar n+m}),\ {\rm deg}\,h= k,\ \sup_{\mathbb B^{\bar n}}|h_{F_{P,Q}}|=1\right\}\\
\\
\displaystyle \qquad\qquad\qquad =\sup\left\{\frac{\ln |I(h)_{F}(P(z))|}{\max\bigl(1,k^{2+\varepsilon(k)}\bigr)}\, :\, h\in\mathcal P(\mathbb C^{\bar n+m}),\ {\rm deg}\,h= k,\ \sup_{K}|I(h)_{F}|=1\right\}\\
\\
\displaystyle \qquad\qquad\qquad \le \sup\left\{\frac{\ln |g_{F}(P(z))|}{\max\bigl(1,k^{2+\varepsilon(k)}\bigr)}\, :\, g\in\mathcal P(\mathbb C^{\bar n+m}),\ {\rm deg}\,g= k\max(s,t),\ \sup_{K}|g_{F}|=1\right\}\\
\\
\displaystyle \qquad\qquad\qquad\le\frac{\max\bigl(1, (k\max(s,t))^{2+\varepsilon(k\max(s,t))}\bigr)}{\max\bigl(1,k^{2+\varepsilon(k)}\bigr)}\, u_{K,\,\mu}^{k\max(s,t)}(P(z);F)\medskip\\
\displaystyle \qquad\qquad\qquad\le \bigl(\max(s,t)\bigr)^{\, 2+\varepsilon(0)} u_{K,\,\mu}^{k\max(s,t)}(P(z);F),\qquad z\in\mathbb C^{\bar n}.
\end{array}
\]
From here as in \eqref{eq4.20} we get
\[
u_{{\rm cl}(\mathbb B^{\bar n}),\,\mu}(z;F_{P,Q})\le
\bigl(\max(s,t)\bigr)^{2+\varepsilon(0)}\, u_{K,\,\mu} (P(z);F),\quad z\in\mathbb C^{\bar n}.
\]
Since the function  $u_{K,\,\mu}(\cdot\, ; F)$ is locally bounded from above by Theorem \ref{theo1.3}\,(b),  the previous inequality implies that the function $u_{{\rm cl}(\mathbb B^{\bar n}),\,\mu}(\cdot\, ;F_{P,Q})$ is locally bounded from above as well. So by Theorem \ref{theo1.3}\,(b) the graph $\Gamma_{F_{P,Q}}$ admits the Bernstein type inequality of exponent $\mu$. 

This completes the proof of the first statement of the theorem.\smallskip

Next, if all $n_j=1$ and $\rho_{f_j}<\infty$, then in the above arguments $\varepsilon= 0$. In this case,  $\Gamma_{F_{P,Q}}$ admits the Bernstein type inequality of exponent $\mu_{{\rm id}}^2$. This exponent is optimal due to Theorem \ref{theo1}\,(c).

The proof of the theorem is complete.
\end{proof}
\begin{proof}[{\bf 6.3. Proof of Theorem \ref{theo1.8}}]
Without loss of generality we may assume that for some $p\in\{1,\dots,m\}$, 
$\rho_{f_j}<\infty$ if $1\le j\le p$ and $\rho_{f_j}=\infty$ if $p+1\le j\le m$. 

For $g\in\mathcal P_k(\mathbb C^{m+1})$  and $1\le j\le m-1$ we define
\[
g_j(z, \mathbf z_j):=
g(z,f_1(z),\dots, f_j(z), \mathbf z_j), \quad z\in\mathbb C,\
\mathbf z_j:=(z_{j+1},\dots , z_{m})\in\mathbb C^{m-j}.
\]
Also, we set
\[
\begin{array}{r}
g_0(z, \mathbf z_0):=
g(z,\mathbf z_0)\quad {\rm and}\quad  g_{m}(z):=g(z,f_1(z),\dots, f_{m}(z)),\medskip\\
\quad z\in\mathbb C,\  \mathbf z_0:=(z_1,\dots, z_{m})\in\mathbb C^{m}.
\end{array}
\]

By definition, $g_j$ is an entire function on $\mathbb C^{m-j+1}$ such that for each fixed $z\in\mathbb C$ function $g_j(z,\cdot)\in\mathcal P_k(\mathbb  C^{m-j})$. In what follows, we add index $_{f_j}$ to all characteristics of Theorem A of section 6.1 related to the function $f:=f_j$ (e.g., $r:=r_{f_j}$, $\varepsilon:=\varepsilon_{f_j}$, etc).

Theorem \ref{theo1.8} is the direct consequence of the following result.
\begin{Th}\label{theo6.3}
There exist numbers $C_j\in\mathbb R_+$, $k_j\in\mathbb N$ and converging to zero sequences $\{\varepsilon_j(k)\}_{k\ge k_j}\subset\mathbb R_+$, $0\le j\le m$,  such that  $\varepsilon_j= 0$ for $0\le j\le p$ and for all $g\in\mathcal P_k(\mathbb C^{m+1})$ with $k\ge k_j$ and all
$\mathbf z_j\in\mathbb C^{m-j}$
\[
g_j(\cdot, \mathbf z_j)\in\mathcal F_{p_j(k),k}\bigl(er_{f_{j+1}}(p_{j}(k));e;\infty\bigr),\quad {\rm where}\quad
p_j(k):=C_j k^{2^j+\varepsilon_j(k)},\quad 0\le j\le m
\]
(here we set $r_{f_{m+1}}:=r_{f_{m}}$).
\end{Th}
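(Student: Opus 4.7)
The plan is to prove Theorem \ref{theo6.3} by induction on $j\in\{0,1,\dots,m\}$, substituting at each step $z_j=f_j(z)$ into $g_{j-1}$ and applying Theorem~A of section~6.1 to the univariate entire function $f_j$ in order to promote the Bernstein-type control on the first coordinate.

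\medskip\noindent\textbf{Base case.} For $j=0$ the function $g_0=g$ is itself a polynomial of degree $\le k$. Viewed as a function of $(z,z_1)$ with $(z_2,\dots,z_m)$ as parameters, the one-dimensional classical Bernstein--Markov inequality applied on the single complex line $l=\mathbb C$ gives $M_{g_0(\cdot,w,\mathbf z_1)}(er)\le e^k M_{g_0(\cdot,w,\mathbf z_1)}(r)$ for every $r>0$ and every $w$, while the Bernstein index of a polynomial of degree $\le k$ on $\mathbb C$ is trivially bounded by $k$. Setting $C_0:=1$ and $\varepsilon_0\equiv 0$, and choosing $k_0\ge 1$ large enough that Theorem~A applies to $f_1$ at $K=p_0(k)=k$, we obtain $g_0(\cdot,\cdot,\mathbf z_1)\in\mathcal F_{p_0(k),k}\bigl(er_{f_1}(p_0(k));e;\infty\bigr)$.

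\medskip\noindent\textbf{Inductive step.} Assume the statement at $j-1$. Fix $\mathbf z_j\in\mathbb C^{m-j}$ and view $g_{j-1}(\cdot,\cdot,\mathbf z_j)$ as a holomorphic function on $\mathbb C^2$. The inductive hypothesis places it in $\mathcal F_{K,k}\bigl(er_{f_j}(K);e;\infty\bigr)$ with $K:=p_{j-1}(k)$; a fortiori it lies in the corresponding class with $M=M_{f_j}(e^2r_{f_j}(K))$. Applying Theorem~A with $f=f_j$, with ``$k$'' of Theorem~A equal to $K$, $p=K$, and $q=k$, one has $\max(p,q)=K$ for $j\ge 1$ and $k$ large (since $K\ge k$), and Theorem~A(b) yields, for every $0<r\le r_{f_j}(K)$,
\[
\frac{M_{g_j(\cdot,\mathbf z_j)}(er)}{M_{g_j(\cdot,\mathbf z_j)}(r)}\le \exp\!\bigl(C^{(j)}K^{1+\varepsilon_{f_j}(K)}\cdot K\bigr)=\exp\!\bigl(C^{(j)}K^{2+\varepsilon_{f_j}(K)}\bigr),
\]
because $(g_{j-1})_{f_j}(\cdot,\mathbf z_j)=g_j(\cdot,\mathbf z_j)$. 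Substituting $K=C_{j-1}k^{2^{j-1}+\varepsilon_{j-1}(k)}$ and setting
\[
\varepsilon_j(k):=2\varepsilon_{j-1}(k)+2^{j-1}\varepsilon_{f_j}(K)+\varepsilon_{j-1}(k)\varepsilon_{f_j}(K),\qquad C_j:=C^{(j)}C_{j-1}^{2+\varepsilon_{f_j}(K)},
\]
one checks that the right-hand exponent equals $C_jk^{2^j+\varepsilon_j(k)}=p_j(k)$. By Theorem~A(1), $\varepsilon_{f_j}\equiv 0$ whenever $\rho_{f_j}<\infty$, so $\varepsilon_j\equiv 0$ for $0\le j\le p$ by induction. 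Finally, the Bernstein-index bound $b_{g_j(z,\cdot)}(\infty)\le k$ is preserved because substituting $z_j=f_j(z)$ does not raise the polynomial degree in the remaining variables $\mathbf z_j$.

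\medskip\noindent\textbf{The main obstacle.} Closing the induction step requires the range condition $e\,r_{f_{j+1}}(p_j(k))\le r_{f_j}(p_{j-1}(k))$, since this is precisely what is needed in order to read the estimate above at the radius $r=r_{f_{j+1}}(p_j(k))$ demanded by the conclusion. This is where the growth hypotheses \eqref{eq1.4} and \eqref{eq1.5} of Theorem \ref{theo1.8} enter. In the finite-order case, the formula \eqref{equ6.32} expresses $k_{f_j}(r)$ proportionally to $m_{f_j}(e^{-\alpha}r)-m_{f_j}(e^{-2\alpha}r)$, so \eqref{eq1.4} translates into $k_{f_j}(r)=o\bigl(\sqrt{k_{f_{j+1}}(r)}\bigr)$; inverting and using $p_j(k)\lesssim p_{j-1}(k)^2$ gives the required comparison. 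In the infinite-order case, Theorem~A(2) supplies $r_{f_{j+1}}(K)\le e^{K^{\delta(K)}}$ and \eqref{eq1.5} plays the analogous role, the factor $\tfrac12$ there reflecting the doubly-exponential growth of $r_{f_{j+1}}$ in $K$. Matching the two regimes cleanly at the transition index $j=p$, and propagating the $\varepsilon_j(k)$ correctly through these inverse-function estimates, is the principal technical hurdle.
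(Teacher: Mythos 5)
Your overall architecture is the same as the paper's: induct on $j$, apply Theorem~A(b) to $f_{j+1}$ with the Theorem~A parameter set to $p_j(k)$ so that the single-variable Bernstein-type bound for $g_j$ becomes one for $g_{j+1}$ with exponent essentially $(p_j(k))^{2+\varepsilon_{f_{j+1}}(p_j(k))}$, and observe that substituting $f_1(z),\dots,f_{j+1}(z)$ leaves the polynomial degree $\le k$ in the remaining variables so $q=k$ is preserved. Your bookkeeping for $\varepsilon_j$ is correct up to index shift, and the observation that $\varepsilon_j\equiv 0$ for $j\le p$ follows from Theorem~A(1) is right. However, you correctly identify but do not prove the step that carries nearly all the weight: the radius nesting condition $e\,r_{f_{j+2}}(p_{j+1}(k))\le r_{f_{j+1}}(p_j(k))$, i.e.\ \eqref{eq6.31}, which is needed so that the conclusion for $g_{j+1}$ is phrased at the radius $er_{f_{j+2}}(p_{j+1}(k))$ required by the next step. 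The paper devotes Lemmas~\ref{lem6.4}, \ref{lem6.5}, \ref{lem6.6} to this, splitting into the regimes $j+2\le p$, $j+1=p$, $p<j+1$. These are not routine: in the finite-order case one must convert hypothesis \eqref{eq1.4} into $\lim_{r\to\infty}(k_{f_{j+1}}(r))^2/k_{f_{j+2}}(r/e)=0$, which uses the maximum principle together with an iterated application of the \eqref{cI}-type ratio bound (chaining \eqref{equ6.46} about $\lceil 1/\alpha_{\rho_{f_{j+2}}}\rceil$ times, as in \eqref{eq6.34}) to pass from the $e^{-\alpha_{\rho_f}}$-scale differences appearing in $k_{f_s}$ to the $e^{-1}$-scale differences appearing in \eqref{eq1.4}; in the mixed and infinite-order cases one must use $r_{f_s}(\cdot)=\frac{1}{e^2}m_{f_s}^{-1}((\cdot)^{1+\varepsilon''_{f_s}(\cdot)})$ from Theorem~A(2) and an integrated version of condition \eqref{cII}, and the factor $\tfrac12$ in \eqref{eq1.5} together with the decay of $\varepsilon''$ is exploited in choosing the exponent $\alpha$ in Lemma~\ref{lem6.6}. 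Your sketch collapses this to a single slogan, and moreover your asserted reduction $k_{f_j}(r)=o(\sqrt{k_{f_{j+1}}(r)})$ compares both at the same $r$, whereas what is needed is $k_{f_{j+1}}$ at $r$ against $k_{f_{j+2}}$ at $r/e$ (to absorb the extra factor $e$). Since you yourself label this step the ``principal technical hurdle'' and leave it undone, the proposal is a correct plan that matches the paper's strategy, but it is not a proof of the statement.

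A minor additional issue: your $C_j:=C^{(j)}C_{j-1}^{2+\varepsilon_{f_j}(K)}$ still depends on $k$ through $K$. The paper removes this by taking $C_{j+1}:=\sup_{k\ge\tilde k_{j+1}}\{C_{f_{j+1}}C_j^{2+\varepsilon_{f_{j+1}}(p_j(k))}\}$, finite because $\varepsilon_{f_{j+1}}$ is bounded; you need something of this kind to make $C_j$ a genuine constant.
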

\begin{proof}
We define $C_0=1$, $k_0=1$, $\varepsilon_0= 0$ and 
prove the result by induction on $j$.

For $j=0$ the function $g_0:=g\in \mathcal P_k(\mathbb C^{m+p+1})$. In particular, $g_0\in\mathcal F_{k,k}(er;e;\infty)$ for all positive numbers $r$ and, hence, for $r=r_{f_1}(k)$. This establishes the base of induction.

Next, assuming that the result holds for $0<j<m$, let us prove it for $j+1$. 

To this end, we apply Theorem A\,(b) to functions $f:=f_{j+1}$ and $g(z,w):=g_j(z,w,\mathbf z_{j+1})$, $(z,w)\in\mathbb C^2$, with $p=k$ equal to $p_j(k)$. Then, by the induction hypothesis, one derives from the theorem that for all $g\in \mathcal P_k(\mathbb C^{m+1})$ with $k$ such that $p_j(k)\ge \tilde k_{j+1}:=\max\bigl(k_{0f_{j+1}}, r_{f_{j+1}}^{-1}(r_{0f_{j+1}})\bigr )$,\smallskip
\begin{equation}\label{equat6.44}
\begin{array}{l}
g_{j+1}(\cdot,\mathbf z_{j+1})\in\mathcal F_{\tilde p_{j+1}(k),k}(r_{f_{j+1}}(p_j(k));e;\infty),\quad {\rm where}\medskip\\
\displaystyle\quad\qquad\qquad\qquad \tilde p_{j+1}(k):=C_{f_{j+1}}(p_{j}(k))^{2+\varepsilon_{f_{j+1}}(p_j(k))}.
 \end{array}
 \end{equation}

Next, by the definitions of $p_{j}(k)$ and $\varepsilon_{f_{j+1}}$,
\[
\begin{array}{l}
\displaystyle \tilde p_{j+1}(k)=C_{f_{j+1}} C_j^{2+\varepsilon_{f_{j+1}}(p_j(k))} k^{2^{j+1}+2^j \varepsilon_{f_{j+1}}(p_j(k))+\varepsilon_j(k) (2+\varepsilon_{f_{j+1}}(p_j(k)))}\medskip\\
\displaystyle
%\qquad\quad\ 
\quad\qquad\ \le C_{j+1} k^{2^{j+1}+\varepsilon_{j+1}(k)}=:p_{j+1}(k).\quad\qquad\
\end{array}
\]
Here 
\[
\begin{array}{l}
\displaystyle
\varepsilon_{j+1}(k):=2^j \varepsilon_{f_{j+1}}(p_j(k))+\varepsilon_j(k) (2+\varepsilon_{f_{j+1}}(p_j(k)))\quad {\rm and}\medskip\\
\displaystyle
\quad C_{j+1}:=\sup_{k\ge\tilde k_{j+1}}\left\{C_{f_{j+1}} C_j^{2+\varepsilon_{f_{j+1}}(p_j(k))}\right\}.
\end{array}
\]
(The number is finite because $\varepsilon_{f_{j+1}}$ is a bounded function.)

Note that the above expression for $\varepsilon_{j+1}$ and statement (1) of Theorem A (applied to functions $f_i$ for $1\le i\le m$) show that $\varepsilon_{j+1}= 0$ whenever $j+1\le m$.  For other indices, $\lim_{k\rightarrow\infty}\varepsilon_{j+1}(k)=0$ (as $\varepsilon_j$ and $\varepsilon_{f_{j+1}}$ possess this property and $p_j(k)\rightarrow\infty$ as $k\rightarrow\infty$).

To complete the proof of the inductive step we must show that for all sufficiently large integers $k$ and all $j+2\le m$
\begin{equation}\label{eq6.31}
er_{f_{j+2}}(p_{j+1}(k))\le  r_{f_{j+1}}(p_j(k)).
\end{equation}
 
To establish this fact we consider three cases.\smallskip

(1) $j+2\le p$.  In this case $f_{j+1}$ and $f_{j+2}$ satisfy condition (I), see \eqref{cI}. Also, due to equation \eqref{equ6.32}, see \cite[Eq.\,(9.3), (9.7)]{B}, functions $r_{f_{s}}$ are right inverses of nondecreasing functions
\[
k_{f_{s}}(r):=\frac{m_{f_s}(e^{-\alpha_{\rho_{f_s}}}r)-m_{f_s}(e^{-2\alpha_{\rho_{f_s}}}r)-1}{9(\sqrt{e}+1)^2(\rho_{f_s}^2+1)(17+2\ln(\rho_{f_s}+1))},\quad r\ge r_{0f_s},\quad s=j+1,\ j+2.
\]
Since $\varepsilon_{s}= 0$, by the definition of $p_s(k)$, $s=j+1,\, j+2$,  for all sufficiently large integers $k$,
\[
er_{f_{j+2}}(p_{j+1}(k))= er_{f_{j+2}}(C_{j+1} k^{2^{j+1}})\quad {\rm and}\quad 
r_{f_{j+1}}(p_j(k))= r_{f_{j+1}}(C_j k^{2^{j}}).
\]
Passing here to inverse functions we reduce \eqref{eq6.31} to the question on the validity, for all sufficiently large $r$, of the inequality
\[
\left(\frac{k_{f_{j+2}}\left(\frac re\right)}{C_{j+1}}\right)^{\frac{1}{2^{j+1}}}\ge 
\left(\frac{k_{f_{j+1}}(r)}{C_{j}}\right)^{\frac{1}{2^{j}}}.
\]
In turn, the latter inequality is the consequence of the following result.
\begin{Lm}\label{lem6.4}
Under the hypotheses of Theorem \ref{theo1.8}, see \eqref{eq1.4},
\[
 \lim_{r\rightarrow\infty}\frac{(k_{f_{j+1}}(r))^2}{k_{f_{j+2}}\left(\frac re\right)}=0.
\]
\end{Lm}
\begin{proof}
Making use of explicit expressions for functions $k_{f_s}$ we obtain
\begin{equation}\label{eq6.32}
\lim_{r\rightarrow\infty}\frac{(k_{f_{j+1}}(r))^2}{k_{f_{j+2}}\left(\frac re\right)}
=\lim_{r\rightarrow\infty}\frac{ \bigl(m_{f_{j+1}}(e^{-\alpha_{\rho_{f_{j+1}}}}r)-m_{f_{j+1}}(e^{-2\alpha_{\rho_{f_{j+1}}}}r)\bigr)^{2}}{m_{f_{j+2}}(e^{-\alpha_{\rho_{f_{j+2}}-1}} r)-m_{f_{j+2}}(e^{-2\alpha_{\rho_{f_{j+2}}-1}} r)}.
\end{equation}
Since $\alpha_{\rho_f}\le 1$, see \eqref{equ6.32}, by the maximum principle for subharmonic functions
\begin{equation}\label{eq6.33}
m_{f_{j+1}}(e^{-\alpha_{\rho_{f_{j+1}}}}r)-m_{f_{j+1}}(e^{-2\alpha_{\rho_{f_{j+1}}}}r)\le m_{f_{j+1}}(r)-m_{f_{j+1}}\left(\mbox{$\frac re$}\right).
\end{equation}

Next, assume that $f_{j+2}\in \mathscr C$ and the limit superior in equation \eqref{cI} for $f_{j+2}$ is bounded from above by a constant $C$. Then due to \eqref{equ6.35} for all sufficiently large $r$,
\begin{equation}\label{equ6.46}
\frac{m_{f_{j+2}}
(e^{\alpha_{\rho_{f_{j+2}}}}r)-m_{f_{j+2}}(e^{-\alpha_{\rho_{f_{j+2}}}}r)}{m_{f_{j+2}}(e^{-\alpha_{\rho_{f_{j+2}}}}r)-m_{f_{j+2}}(e^{-2\alpha_{\rho_{f_{j+2}}}}r)}<A:=\frac{C^2}{2}.
\end{equation}
Applying inequality \eqref{equ6.46} $\ell+1$ times, $\ell:=\bigl\lceil \frac{1}{\alpha_{\rho_{f_{j+2}}}}\bigr\rceil$,  and after that the maximum  principle for subharmonic functions,  for all sufficiently large $r$ we obtain\smallskip
\begin{equation}\label{eq6.34}
\begin{array}{l}
\displaystyle
m_{f_{j+2}}(e^{-\alpha_{\rho_{f_{j+2}}-1}}\, r)-m_{f_{j+2}}(e^{-2\alpha_{\rho_{f_{j+2}}-1}}\, r)>\frac{m_{f_{j+2}}(e^{-1}\, r)-m_{f_{j+2}}(e^{-\alpha_{\rho_{f_{j+2}}-1}}\, r)}{A}\medskip\\
\displaystyle >\frac{m_{f_{j+2}}(e^{\alpha_{\rho_{f_{j+2}}-1}}\, r)-m_{f_{j+2}}(e^{-1}\, r)}{A^2}>\cdots >\frac{m_{f_{j+2}}(e^{\ell\, \alpha_{\rho_{f_{j+2}}-1}}\, r)-m_{f_{j+2}}(e^{(\ell-1)\, \alpha_{\rho_{f_{j+2}}-1}}\, r)}{A^{\ell+1}}\medskip\\
\displaystyle\ge \frac{m_{f_{j+2}}( r)-m_{f_{j+2}}(\frac re)}{A^{\ell+1}}.
\end{array}
\end{equation}
Using inequalities \eqref{eq6.33}, \eqref{eq6.34} in the right-hand side of \eqref{eq6.32}, due to condition \eqref{eq1.4} of the theorem, we get
\[
\lim_{r\rightarrow\infty}\frac{(k_{f_{j+1}}(r))^2}{k_{f_{j+2}}\left(\frac re\right)}\le
\lim_{r\rightarrow\infty}\frac{A^{\ell+1}\cdot\bigl(m_{f_{j+1}}(r)-m_{f_{j+1}}\left(\frac re\right)
 \bigr)^{2}}{m_{f_{j+2}}( r)-m_{f_{j+2}}(\frac re)}=0.
\]

The proof of the lemma is complete.
\end{proof}

As we explained earlier, Lemma \ref{lem6.4} implies inequality \eqref{eq6.31} for all sufficiently large integers $k$. Now, as $k_{j+1}\in\mathbb N$ in the theorem we choose a natural number such that $p_j(k_{j+1})\ge \tilde k_{j+1}$ and that \eqref{eq6.31} is valid for all integers $k\ge k_{j+1}$. 

This completes the proof of the inductive step in case (1).\smallskip

(2) $j+1=p$. In this case $f_{j+1}$ satisfies condition (I)  and $f_{j+2}$ satisfies condition (II), see \eqref{cI}, \eqref{cII}. Thus, as before, for all sufficiently large $k$,
$r_{f_{j+1}}(p_{j}(k))=r_{f_{j+1}}(C_j k^{2^j})$  and, due to Theorem A part (2),
\[
r_{f_{j+2}}(p_{j+1}(k))=\frac{1}{e^2}m_{f_{j+2}}^{-1}\left(\left(C_{j+1} k^{2^{j+1}}\right)^{1+\varepsilon''_{f_{j+2}}(C_{j+1} k^{2^{j+1}})}\right),
\]
where the nonnegative function $\varepsilon''_{f_{j+2}}$ decreases to zero.

Next, by the definition of function $k_{f_{j+1}}$, for all sufficiently large $r$,
\[
k_{f_{j+1}}(r)\le m_{f_{j+1}}(r).
\]
Passing here to right inverse functions we get, for all sufficiently large $k$,
\[
r_{f_{j+1}}(k)\ge m_{f_{j+1}}^{-1}(k).
\]
Using these facts we conclude that in order to establish inequality \eqref{eq6.31} in this case, it suffices to prove that for all sufficiently large $k$
\[
m_{f_{j+2}}^{-1}\left(\left(C_{j+1} k^{2^{j+1}}\right)^{2}\right)\le m_{f_{j+1}}^{-1}(C_j k^{2^j}).
\]
The latter can be derived from the following result by passing to inverse functions.
\begin{Lm}\label{lem6.5}
For all sufficiently large $r$,
\[
\left(\frac{1}{C_j}m_{f_{j+1}}(r)\right)^{2^{-j}}\le \left(\frac{1}{C_{j+1}^2}m_{f_{j+2}}(r)\right)^{2^{-j-2}}.
\]
\end{Lm}
\begin{proof}
We apply condition \eqref{cII} for $f_{j+2}$ assigning index $_{j+2}$ to all functions which appear there. According  to this condition, for each $\varepsilon>0$ there exists some $t_\varepsilon>0$ such that for all $t\ge t_{\varepsilon}$,
\[
-\left(\frac{1}{\ln\psi_{j+2}(t)}\right)'<\frac{\varepsilon}{t^2}.
\]
(The minus sign reflects the fact that the derivative of the function is nonpositive.)
 
Integrating this inequality from $t$ to infinity we get
\[
\frac{1}{\ln\psi_{j+2}(t)}<\frac{\varepsilon}{t},\qquad t\ge t_\varepsilon .
\]
Due to condition (II) for $f_{j+2}$ this implies, for all sufficiently large $t$,
\[
\ln m_{f_{j+2}}(e^t)\ge \frac{\ln\psi_{j+2}(t)}{2}>\frac{t}{2\varepsilon}.
\]
Let us choose here $\varepsilon:=\frac{1}{8(\rho_{f_{j+1}}+1)}$.
Then from the previous inequality and the fact that $f_{j+1}$ is of finite order $\rho_{f_{j+1}}$ we obtain
\[
\varlimsup_{r\rightarrow\infty}\frac{\ln m_{f_{j+1}}(r)}{\ln r}=\rho_{f_{j+1}}<\rho_{f_{j+1}}+1=\frac{1}{8\varepsilon}\le
\varliminf_{r\rightarrow\infty}\frac{\ln m_{f_{j+2}}(r)}{4\ln r}.
\]
This implies the required statement of the lemma.
 \end{proof}
 Now, choosing $k_{j+1}\in\mathbb N$ as at the end of the proof of case (1) we complete the proof of the inductive step in case (2).\smallskip

(3) $p<j+1$. In this case $f_{j+1}$ and $f_{j+2}$ satisfy condition (II), see \eqref{cII}. Thus, as before, for all sufficiently large $k$ and $s=j+1,\, j+2$,
\[
r_{f_{s}}(p_{s-1}(k))=\frac{1}{e^2}m_{f_{s}}^{-1}\left(\left(C_{s-1} k^{2^{s-1}}\right)^{1+\varepsilon''_{f_{s}}(C_{s-1} k^{2^{s-1}})}\right),
\]
where the nonnegative functions $\varepsilon''_{f_{s}}$ decrease to zero.

Hence, to establish inequality \eqref{eq6.31} in this case we must show that for all sufficiently large integers $k$,
\[
e\, m_{f_{j+2}}^{-1}\left(\left(C_{j+1} k^{2^{j+1}}\right)^{1+\varepsilon''_{f_{j+2}}(C_{j+1} k^{2^{j+1}})}\right)\le m_{f_{j+1}}^{-1}\left(\left(C_{j} k^{2^{j}}\right)^{1+\varepsilon''_{f_{j+1}}(C_{j} k^{2^{j}})}\right).
\]
This inequality follows straightforwardly from the next result.
\begin{Lm}\label{lem6.6}
There exists some $\alpha>1$ such that for all sufficiently large $k$,
\[
e\, m_{f_{j+2}}^{-1}\left(\left(C_{j+1} k^{2^{j+1}}\right)^{\alpha}\,\right)\le m_{f_{j+1}}^{-1}\left(C_{j} k^{2^{j}}\right).
\]
\end{Lm}
\begin{proof}
Passing to inverse functions we rewrite the required inequality as the inequality
\[
\left(\frac{1}{C_j}m_{f_{j+1}}(r)\right)^{2^{-j}}\le \left(\frac{1}{C_{j+1}^{\alpha}}\,m_{f_{j+2}}\left(\frac re\right)\right)^{\frac{2^{-j-1}}{\alpha}}
\]
valid for all sufficiently large $r>0$.

This is true if 
\[
\varlimsup_{r\rightarrow\infty}\frac{\ln m_{f_{j+1}}(r)}{\ln m_{f_{j+2}}(\frac re)}<\frac{1}{2\alpha}.
\]
But according to  condition \eqref{eq1.5} of the theorem
\[
\varlimsup_{r\rightarrow\infty}\frac{\ln m_{f_{j+1}}(r)}{\ln m_{f_{j+2}}(\frac re)}=L<\frac{1}{2}.
\]
Hence, it suffices to choose
\[
\alpha=\frac{2}{1+2L}.
\]
This completes the proof of the lemma.
\end{proof}
Finally, choosing $k_{j+1}\in\mathbb N$ as in cases (1),\,(2) we complete the proof of the inductive step.

Thus Theorem \ref{theo6.3} is proved by induction on $j$. 
\end{proof}

Applying Theorem \ref{theo6.3} with $j=m$, by the definition of class $\mathcal F_{p_{m}(k),k}(er_{f_{m}}(p_{m}(k);e;\infty)$ repeating the arguments of the proof of Theorem \ref{theo1.5} (see Lemma \ref{lem5.1}) we obtain that $\Gamma_f$ admits the Bernstein type inequality of exponent $\mu(k):=k^{2^{m}+\varepsilon(k)}$, $k\in\mathbb Z_+$, for some $\varepsilon:\mathbb Z_+\rightarrow\mathbb R_+$ decreasing to zero.

This completes the proof of Theorem \ref{theo1.8}.
\end{proof}
\begin{R}\label{rem6.6}
{\rm Arguing as in the proof of \cite[Th.\,2.3]{B}, one deduces directly from Theorem \ref{theo6.3} by means of Theorem A\,(a) that there exist some constant $C\in\mathbb R_+$, a number $k_*\in\mathbb N$ and a  decreasing to $0$ continuous function $\varepsilon_*: (k_*,\infty)\rightarrow\mathbb R_+$ equal to $0$ if all $\rho_{f_j}=0$ such that for all $p\in\mathcal P_k(\mathbb C^{m+1})$ with $k\ge k_*$ 
\begin{equation}
\max_{\mathbb D^{m+1}} |p|\le e^{C k^{2^{m}+\varepsilon_*(k)}\ln\bigl(r_{f_1}(p_0(k))\cdots r_{f_m}(p_{m-1}(k))\bigr)}\max_{\mathbb D}|p_f|;
\end{equation}\label{equ6.50}
recall that $p_f(z):=p(z,f_1(z),\dots, f_m(z))$, $z\in\mathbb C$, also, $\mathbb D^{m+1}:=\times^{m+1}\,\mathbb D$.

Here, according to \eqref{eq6.31}, for a sufficiently large $k_*$ and all $k\ge k_*$
\[
\ln\bigl(r_{f_1}(p_0(k))\cdots r_{f_m}(p_{m-1}(k))\bigr)\le m\ln\bigl(r_{f_1}(k)\bigr).
\]
Moreover, cf. \cite[Th.\,2.8]{B}, for all such $k$,
\[
r_{f_1}(k)\le\left\{
\begin{array}{ccc}
c_{f_1}k^{\frac{1}{\rho_{f_1}}}&{\rm if}&0<\rho_{f_1}<\infty\medskip\\
k^{\delta_{f_1}(k)}&{\rm if}&0<\rho_{f_1}=\infty
\end{array}
\right.
\]
for a constant $c_{f_1}\in\mathbb R_+$ and a decreasing to $0$ nonnegative function $\delta_{f_1}\in C([k_*,\infty))$.
}
\end{R}

\section{Proofs of Propositions \ref{propo1.11}, \ref{prop1.9} and \ref{prop1.10}}
\begin{proof}[{\bf 7.1. Proof of Proposition \ref{propo1.11}}]
(1) For $\rho_f<\infty$ the assumption of the proposition implies,
for some constant $c>0$,
\begin{equation}\label{equa7.49}
\phi_f(t)-c\le\phi_g(t)\le\phi_f(t)+c,\qquad t\in\mathbb R.
\end{equation}
Hence,
\[
\varlimsup_{t\rightarrow\infty}\frac{\phi_g(t+1)-\phi_g(t)}{\phi_g(t)-\phi_g(t-1)}\le \varlimsup_{t\rightarrow\infty}\frac{\phi_f(t+1)-\phi_f(t)+2c}{\phi_f(t)-\phi_f(t-1)-2c}=\varlimsup_{t\rightarrow\infty}\frac{\phi_f(t+1)-\phi_f(t)}{\phi_f(t)-\phi_f(t-1)}<\infty,
\]
i.e. $g\in\mathscr C$. (Here the last equality is due to the fact that  $\phi_f(t+1)-\phi(t)$, $t\in\mathbb R$, is a nondecreasing unbounded from above function as $f$ is nonpolynomial.) 

For $\rho_f=\infty$, inequality \eqref{equa7.49} yields
\[
\lim_{t\rightarrow\infty}\frac{\ln\phi_g(t)}{\ln\phi_f(t)}=1.
\]
Thus, conditions \eqref{cII} for $f$  and $g$ coincide, i.e. $g\in\mathscr C$.\smallskip

\noindent (2) The assumption of the proposition leads to the inequality: 
\[
\frac 1c \phi_f(t)\le \phi_g(t)\le c\phi_f(t),\qquad t\in\mathbb R,
\]
for some constant $c>1$.

In turn, this implies
\[
\lim_{t\rightarrow\infty}\frac{\ln\phi_g(t)}{\ln\phi_f(t)}=1.
\]
Thus, as above, conditions \eqref{cII} for $f$ and $g$ coincide, i.e. $g\in\mathscr C$.\smallskip
 
\noindent (3) The statement holds true because $m_{f^n}=n\, m_f$ for all $n\in\mathbb N$.
\end{proof}

\begin{proof}[{\bf 7.2. Proof of Proposition \ref{prop1.9}}]
Let
\[
\varliminf_{r\rightarrow\infty}\frac{m_f(r)}{r^{\rho(r)}}=\mu_f>0.
\]
Recall that the proximate order satisfies the following property, see, e.g., \cite{L}: {\em for every $\varepsilon>0$ and every $0<a<b<\infty$
there exists $r_0>0$ such that for all $k\in [a,b]$ and $r\ge r_0$,}
\begin{equation}\label{e7.36}
(1-\varepsilon)k^{\rho_f}r^{\rho(r)}<(kr)^{\rho(kr)}<(1+\varepsilon)k^{\rho_f}r^{\rho(r)}.
\end{equation}

Let us fix some $\varepsilon\in\bigl(0, \min\bigl(\frac 12,\frac{\mu_f}{2}\bigr)\bigr)$ and define a number $d>0$ from the identity
\begin{equation}\label{e7.37}
\frac{\mu_f}{2}-3\sigma_f e^{-\rho_f d}=\frac{9\sigma_f -\mu_f e^{-\rho_f}}{4}\cdot \frac{2\mu_f}{9\sigma_f}.
\end{equation}
Then, due to \eqref{e7.36}, convexity of $\phi_f$ and definitions of $\mu_f$ and $\sigma_f$,  there exists some $t_\varepsilon\in\mathbb R$ such that for all $t\ge t_\varepsilon$, 
\[
\begin{array}{l}
\displaystyle  \phi_f(t+1)-\phi_f(t) \le 
(\sigma_f+\varepsilon)e^{(t+1)\rho(e^{t+1})}-(\mu_f-\varepsilon)e^{t\rho(e^{t})}\medskip\\
\displaystyle \ \ \quad\qquad\qquad\qquad\le (\sigma_f+\varepsilon)(1+\varepsilon)e^{2\rho_f}e^{(t-1)\rho(e^{t-1})} -(\mu_f-\varepsilon)(1-\varepsilon)e^{\rho_f }e^{(t-1)\rho(e^{t-1})}\medskip\\
\displaystyle \ \ \quad\qquad\qquad\qquad \le \frac{9\sigma_f e^{\rho_f}-\mu_f}{4}\, e^{(t-1)\rho(e^{t-1})}e^{\rho_f}=\frac{9\sigma_f e^{2\rho_f}d}{2\mu_f}\cdot \frac{\frac{\mu_f}{2}-3\sigma_f e^{-d\rho_f}}{d}\,e^{(t-1)\rho(e^{t-1})}\medskip\\
\displaystyle 
\ \ \quad\qquad\qquad\qquad\le \frac{9\sigma_f e^{2\rho_f}d}{2\mu_f}\cdot
\frac{(\mu_f-\varepsilon)e^{(t-1)\rho(e^{t-1})}-\frac{\sigma_f+\varepsilon}{1-\varepsilon}e^{-d\rho_f }e^{(t-1)\rho(e^{t-1})}}{d}\medskip\\
\displaystyle \ \ \quad\qquad\qquad\qquad
\le \frac{9\sigma_f e^{2\rho_f}d}{2\mu_f}\cdot\frac{(\mu_f-\varepsilon)e^{(t-1)\rho(e^{t-1})}-(\sigma_f+\varepsilon)e^{(t-1-d)\rho(e^{t-1-d})}}{d}\medskip\\
\displaystyle \ \ \quad\qquad\qquad\qquad
\le\frac{9\sigma_f e^{2\rho_f}d}{2\mu_f}\cdot\frac{\phi_f(t-1)-\phi_f(t-1-d)}{d}
\le \frac{9\sigma_f e^{2\rho_f}d}{2\mu_f}\,\bigl(\phi_f(t)-\phi_f(t-1)\bigr).
\end{array}
\]

This shows that $f$ satisfies \eqref{cI}, i.e. $f\in\mathscr C$.
\end{proof}

\smallskip
\begin{proof}[{\bf 7.3. Proof of Proposition \ref{prop1.10}}]
We set $u:={\rm Re}\, f$ and $h:=e^f$. Then $M_h(r)=M_{e^u}(r)$ and so $m_h(r)=M_u(r)$,
$r>0$. To prove that $h\in\mathscr C$, we consider two cases.

First, assume that $f\in\mathscr C$ satisfies conditions (I), see \eqref{cI}, and \eqref{eq1.11}. We prove that %\begin{Lm}\label{lem7.1}
%Entire function $h:=e^f$ has order $\rho_h=\infty$.
%\end{Lm}
%\begin{proof}
%We set $u:={\rm Re}\, f$.
%By definition, $M_h(r)=M_{e^u}(r)$, hence,
%$m_h(r)=M_u(r)$,
%$r>0$. Assuming, on the contrary, that $\rho_h<\infty$ we obtain
%\[
%\varlimsup_{r\rightarrow\infty}\frac{ m_{u}(r)}{\ln r}=:\rho_h<\infty.
%\]
%This implies that the pluriharmonic function $u$ is of polynomial growth. Hence, it is a polynomial of some degree $d$. In particular, each $\alpha$-divided difference $\Delta_h^{\alpha}$, $h\in\mathbb C^n$, of order $|\alpha|=d+1$ applied to $f$ annihilates $u$, i.e., $\Delta_h^{\alpha} f$ is a holomorphic function having imaginary values only.  Thus, $\Delta_h^{\alpha} f$ is constant and, in particular, $\Delta_h^{\beta} f=0$ for all $h\in\mathbb C^n$, $|\beta|=d+2$. Applying this to the Maclaurin expansion of $f$ one easily obtains that $f$ is a polynomial of degree at most $d+1$, a contradiction with its nonpolynomiality.
%\end{proof}
$h\in\mathscr C$ satisfies condition (II) with $\psi(t)=\phi_{h}(t)\, (:=m_h(e^t)=M_u(e^t))$, see \eqref{cII}: 
\begin{Lm}\label{lem7.2}
\[
\lim_{t\rightarrow\infty}t^2\left(\frac{1}{m_{u}(e^t)}\right)'=0.
\]
\end{Lm}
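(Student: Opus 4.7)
Plan: The displayed limit is exactly condition~(II) (see~\eqref{cII}) of the class $\mathscr C$ applied to $h = e^f$ with the choice $\psi(t)=\phi_h(t)$, because $m_u(e^t) = \ln M_u(e^t) = \ln m_h(e^t) = \ln\phi_h(t)$ and trivially $\ln\psi/\ln\phi_h \equiv 1$. Since $\rho_h = \infty$ (the exponential of a nonpolynomial entire function has infinite order), verifying this limit completes the proof that $h\in\mathscr C$.

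Set $v(t) := m_u(e^t) = \ln\phi_h(t)$. The first step is to reduce everything to $\phi_f$: the Borel--Carath\'eodory inequality applied at $R = e^t$, $r = e^{t-1}$ yields $M_f(e^{t-1}) \le \tfrac{2}{e-1}M_u(e^t) + \tfrac{e+1}{e-1}|f(0)|$, which together with the trivial $M_u(e^t)\le M_f(e^t)$ gives for all sufficiently large $t$
\[
\phi_f(t-1) - c_0 \le v(t) \le \phi_f(t),
\]
so that \eqref{eq1.11} forces $v(t)/t^2 \to \infty$. The core of the argument is then to bound the logarithmic derivative $v'(t) = \phi_h'(t)/\phi_h(t)$. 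The function $\phi_h$ is convex in $t$ (as $M_u(r)$ is convex in $\log r$ for subharmonic $u$), the Cauchy--Riemann equations give $|\nabla u| = |f'|$, and Cauchy's derivative estimate yields $M_{f'}(r) \le M_f(er)/((e-1)r)$. Combining these with the quasi-geometric growth of $\phi_f'$ that follows from condition~(I)—iterating $\phi_f(t+1)-\phi_f(t) \le C(\phi_f(t)-\phi_f(t-1))$ and using convexity of $\phi_f$ yields $\phi_f'(t+1) \le C^2\phi_f'(t)$ and hence $\phi_f'(t) \le C_1\phi_f(t)$ for large $t$—after a careful refinement of the naive chord bound one obtains
\[
v'(t) \le C_2 \phi_f'(t) \le C_3 \phi_f(t) \le C_4 v(t)
\]
for $t$ sufficiently large. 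Plugging in,
\[
\left|t^2 \Bigl(\frac{1}{v(t)}\Bigr)'\right| = \frac{t^2 v'(t)}{v(t)^2} \le \frac{C_4 t^2}{v(t)} \le \frac{C_5 t^2}{\phi_f(t)} \xrightarrow{t\to\infty} 0
\]
by~\eqref{eq1.11}.

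The main obstacle is establishing the sharp bound $v'(t) \le C_2\phi_f'(t)$: the direct chord estimate $\phi_h'(t) \le \phi_h(t+1)-\phi_h(t)$ combined with the B--C bounds gives only $v'(t) \lesssim e^{\phi_f(t+1)-\phi_f(t-1)}$, which is doubly exponential in the typical case $\rho_f > 0$ and therefore useless. The required improvement exploits the ``single-increment gap'' $\phi_f(t) - v(t) = O(\phi_f(t) - \phi_f(t-1))$ (a refinement of B--C) together with condition~(I), so that the derivatives of $v$ and $\phi_f$ can be matched up to a bounded multiplicative constant.
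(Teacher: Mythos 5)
Your setup is right: the statement is condition (II) with $\psi=\phi_h$, the Borel--Carath\'eodory inequality sandwiches $v(t):=m_u(e^t)$ between $\phi_f(t-1)-c_0$ and $\phi_f(t)$, and condition \eqref{eq1.11} forces $v(t)/t^2\to\infty$. You also correctly isolate what is needed: a bound of the form $v'(t)\le C_4\,v(t)$ for large $t$.

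However, the proposal has a genuine gap precisely at the step you flag. You never establish $v'(t)\le C_2\,\phi_f'(t)$, and both routes you sketch fail. The chord bound applied to the convex function $\phi_h$ gives $v'(t)=\phi_h'(t)/\phi_h(t)\le(\phi_h(t+1)-\phi_h(t))/\phi_h(t)<e^{\,v(t+1)-v(t)}$, and the Cauchy-on-$f'$ route yields $v'(t)\lesssim e^{\,\phi_f(t+1)-v(t)}$; both are exponential in $v(t+1)-v(t)$, which tends to $\infty$ (it is of the order of $\phi_f(t+1)-\phi_f(t-1)$), so neither is of any use --- exactly the obstruction you yourself describe. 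The phrase ``after a careful refinement'' covers over the missing content. The paper's proof avoids the exponential loss by applying the chord estimate directly to $q(t)=m_u(e^t)$ rather than to $\phi_h=e^{q}$: invoking the monotonicity of $q'$, it writes $q'(t)\le q(t+1)-q(t)\le q(t+1)$. Then Borel--Carath\'eodory together with condition (I) give $q(t+1)\le\phi_f(t+1)\le c_1^2\,\phi_f(t-1)\le c_2\,q(t)$, hence $q'(t)\le c_2\,q(t)$, and $t^2 q'(t)/q(t)^2\le c_2\,t^2/q(t)\lesssim t^2/\phi_f(t)\to 0$ by \eqref{eq1.11}. Your observation $\phi_f'(t)\le C_1\phi_f(t)$ from condition (I) is correct but is a detour; the crucial missing ingredient is the chord estimate at the level of $q$ itself, which is what turns the exponential loss into a bounded multiplicative one.
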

\begin{proof}
Applying the Borel-Carath\'eodory theorem to $f$ restricted to each complex line passing through the origin we obtain, for $0<s<1$ and all $r>0$,
\begin{equation}\label{eq7.35}
M_f(sr)\le \frac{2}{(1-s)}\,M_u(r)+\frac{1+s}{1-s}\, |f(0)|.
\end{equation}
On the other hand, obviously
\begin{equation}\label{eq7.36}
M_u(r)\le M_f(r).
\end{equation}
Further, condition \eqref{cI} for $f$ implies, for some $A>0$ and all sufficiently large $t$,
\[
\phi_f(t+1)-\phi_f(t)<A\,\bigl(\phi_f(t)-\phi_f(t-1)\bigr)<A\phi_f(t).
\]
Hence, for such $t$,
\begin{equation}\label{eq7.37}
\phi_f(t+1)<c_1 \phi_f(t),\qquad c_1:=A+1.
\end{equation}

Thus, for $q(t):= m_{u}(e^t)$, $t\in\mathbb R$, from \eqref{eq7.35} with $s=\frac 1e$, \eqref{eq7.36} and \eqref{eq7.37} we obtain, for all sufficiently large $t$,
\begin{equation}\label{eq7.38}
q(t+1)\le \phi_f(t+1)<c_1^2\phi_f(t-1)\le c_2q(t),
\end{equation}
for some $c_2$ depending on $A$ and $f$.

Now, from \eqref{eq7.38} using that $q'$ is a nonnegative nondecreasing function by \eqref{eq1.11} we obtain
\[
\begin{array}{l}
\displaystyle
0\le \varliminf_{t\rightarrow\infty}t^2\left(-\frac{1}{ q(t)}\right)'\le\varlimsup_{t\rightarrow\infty}t^2\left(-\frac{1}{ q(t)}\right)'= \varlimsup_{t\rightarrow\infty}\frac{t^2\, q'(t)}{q^2(t)}\le  \varlimsup_{t\rightarrow\infty}\frac{t^2\bigl(q(t+1)-q(t)\bigr)}{q^2(t)}\medskip\\
\displaystyle \quad\le \varlimsup_{t\rightarrow\infty}\frac{t^2 q(t+1)}{  q^2(t)}\le\varlimsup_{t\rightarrow\infty}\frac{t^2\,c_2\,q(t)}{q^2(t)}\le\varlimsup_{t\rightarrow\infty}\frac{c_2^2 \,t^2}{\phi_f(t)}=0.
\end{array}
\]
\end{proof}

Next, we consider the case of $f\in\mathscr C$ satisfying condition (II), see \eqref{cII}.  Let us prove the following result.
\begin{Lm}\label{lemma7.2}
Function $f$ satisfies condition \eqref{eq1.11}.
\end{Lm}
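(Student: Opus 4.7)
The plan is to integrate the derivative condition \eqref{cII} to deduce that $\ln\psi$ grows superlinearly in $t$, and then transfer this growth to $\phi_f$ via the asymptotic $\ln\psi/\ln\phi_f\to 1$. To set up, put $g(t):=1/\ln\psi(t)$. Since $f$ is nonpolynomial, $\phi_f(t)\to\infty$, and the asymptotic then forces $\ln\psi(t)\to\infty$, so $g(t)\to 0^+$ as $t\to\infty$. Because $\psi$ is convex, it is locally Lipschitz on any open interval where it is finite, and hence absolutely continuous on bounded intervals; the same therefore holds for $\ln\psi$ (bounded away from $0$ for large $t$) and for $g$. In particular $g'$ exists a.e., is nonpositive (as $\psi$ is increasing), and $g$ equals the integral of $g'$ on bounded intervals.

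Given $\varepsilon>0$, condition \eqref{cII} supplies some $t_\varepsilon$ with $|t^2 g'(t)|<\varepsilon$ at every $t\ge t_\varepsilon$ in the domain of $g'$. Since $g'\le 0$ a.e., this reads $-g'(t)<\varepsilon/t^2$ a.e. on $[t_\varepsilon,\infty)$. Combined with $g(T)\to 0$, the fundamental theorem for absolutely continuous functions gives, for each $t\ge t_\varepsilon$,
\[
g(t)=-\int_t^\infty g'(s)\,ds<\varepsilon\int_t^\infty\frac{ds}{s^2}=\frac{\varepsilon}{t}.
\]
Hence $\ln\psi(t)>t/\varepsilon$ for all sufficiently large $t$, and since $\varepsilon>0$ was arbitrary, $\ln\psi(t)/t\to\infty$. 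Invoking $\ln\psi(t)/\ln\phi_f(t)\to 1$, this yields $\ln\phi_f(t)/t\to\infty$ as well.

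The proof then concludes by noting that the last growth statement implies $\phi_f(t)\ge e^{ct}$ eventually, for every fixed $c>0$, so in particular $\phi_f(t)/t^2\to\infty$, which is \eqref{eq1.11}. The only real (and very mild) obstacle I anticipate is justifying the representation $g(t)=-\int_t^\infty g'(s)\,ds$ in spite of $\psi$ being differentiable only at all but countably many points; this is handled by the absolute continuity of convex functions on bounded intervals together with $g(T)\to 0$, and I do not expect any further technical issues.
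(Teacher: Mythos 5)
Your proof is correct and follows essentially the same route as the paper's: integrate the bound $-(1/\ln\psi)'(t)<\varepsilon/t^2$ from $t$ to $\infty$ to conclude $t/\ln\psi(t)\to 0$, transfer to $\phi_f$ via $\ln\psi/\ln\phi_f\to 1$, and deduce super-quadratic growth of $\phi_f$. The only difference is cosmetic: you spell out the absolute-continuity justification for the improper integration (which the paper takes for granted) and you pass to $\phi_f\ge e^{ct}$, whereas the paper uses the slightly weaker bound $\phi_f\ge(\ln\phi_f)^2$; both immediately yield $\phi_f(t)/t^2\to\infty$.
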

\begin{proof}
Due to condition \eqref{cII}, for each $\varepsilon>0$ there exists some $t_\varepsilon>0$ such that for all $t\ge t_\varepsilon$
\[
-\left(\frac{1}{\ln\psi(t)}\right)'\le \frac{\varepsilon}{t^2}.
\]
Integrating this inequality from $t$ to $\infty$ we get for all $t\ge t_\varepsilon$
\[
\frac{t}{\ln\psi(t)}\le \varepsilon.
\]
This implies that
\begin{equation}\label{eq7.54}
\lim_{t\rightarrow\infty}\frac{t}{\ln\psi(t)}=0.
\end{equation}
Further, due to condition (II) there exists some $t_*\in\mathbb R$ and a continuous function $\kappa : [t_*,\infty)\rightarrow \mathbb R_+$, $\lim_{t\rightarrow\infty}\kappa(t)=1$, such that
\begin{equation}\label{eq7.55}
\ln\psi(t)=\kappa(t)\ln\phi_f(t),\qquad t\ge t_*.
\end{equation}
From here and \eqref{eq7.54} we obtain
\[
0\le\varliminf_{t\rightarrow\infty}\frac{t^2}{\phi_f(t)}\le \varlimsup_{t\rightarrow\infty}\frac{t^2}{\phi_f(t)}\le \varlimsup_{t\rightarrow\infty}\frac{t^2}{(\ln\phi_f(t))^2}=
 \varlimsup_{t\rightarrow\infty}\frac{t^2}{(\ln\psi(t))^2}=0.
\]
That is, $f$ satisfies condition \eqref{eq1.11}.
\end{proof}
Due to the lemma, without additional restrictions on $f$, we must show that $e^f$ satisfies condition (II), see \eqref{cII}.  

We proceed as in the proof of Theorem~A  denoting by $s: [t_0,\infty)\rightarrow\mathbb R$, for some $t_0>0$, a continuous function
such that
\begin{equation}\label{eq7.56}
\frac{\psi(t)}{\psi(t-2s(t))}=e.
\end{equation}
Then for $\tilde s(t):=\min(s(t),1)$,
\begin{equation}\label{eq7.57}
\frac{1}{\tilde s(t)}\le \bigl(m_f(e^t)\bigr)^{\frac{\kappa(t)\varepsilon(t)}{t}},\qquad t\ge t_0,
\end{equation}
where $\varepsilon\in C\bigl([t_0,\infty)\bigr)$ is a positive continuous function tending to zero at $\infty$.

Next, according to \eqref{eq7.35}, \eqref{eq7.36}, for all  $t\ge t_0$ for a sufficiently large $t_0$,
\[
\phi_f(t-2\tilde s(t))\le \ln\frac{\tilde c}{\tilde s(t)}+m_u(e^t)\qquad {\rm and}\qquad m_u(e^t)\le\phi_f(t)
\]
for an absolute constant  $\tilde c>0$. (Recall that $u:={\rm Re}\, f$.)

From here, due to \eqref{eq7.55},  \eqref{eq7.56}, \eqref{eq7.57} we obtain, for all $t\ge t_0$, 
\begin{equation}\label{eq7.58}
(1-\delta(t))\bigl(e^{-1}\psi(t)\bigr)^{\frac{1}{\kappa(t-2\tilde s(t))}}\le (1-\delta(t))\,\phi_f(t-2\tilde s(t) )\le m_u(e^t)\le \phi_f(t),
\end{equation}
where $\delta\in C\bigl([t_0,\infty)\bigr)$ is a positive continuous function tending to zero at $\infty$.

Now, function ${\rm id}-2\tilde s\in C\bigl([t_0,\infty)\bigr)$ tends to $\infty$ at $\infty$ and has minimal value in the interval $[t_0-2, t_0)$. In particular, each $t\ge t_0$ can be written as $t=v_t-2\tilde s(v_t)$ for some $v_t> t$. So for a sufficiently large $t_0>0$ and all $t\ge t_0$ we have by \eqref{eq7.58}, \eqref{eq7.56} and \eqref{eq7.57},
\[
\begin{array}{l}
\displaystyle
m_u(e^t)=m_u\bigl(e^{v_t-2\tilde s(v_t)}\bigr)\ge 
(1-\delta(t))\bigl(e^{-1}\psi(v_t-2\tilde s(v_t))\bigr)^{\frac{1}{\kappa(t-2\tilde s(t))}}\medskip\\
\displaystyle \quad \qquad\ge (1-\delta(t))\bigl(e^{-2}\psi(v_t)\bigr)^{\frac{1}{\kappa(t-2\tilde s(t))}}=(1-\delta(t))e^{-\frac{2}{\kappa(t-2\tilde s(t))}}\bigr(\phi_f(v_t)\bigr)^{\frac{\kappa(v_t)}{\kappa(t-2\tilde s(t))}}\ge
\bigl(\phi_f(v_t)\bigr)^{\frac 34}\medskip\\
\displaystyle \quad\qquad\ge \bigl(m_u(e^{v_t})\bigr)^{\frac 34}\ge \bigl(m_u(e^{v_t})-m_u\bigl(e^{v_t-2\tilde s(v_t)}\bigr)\bigr)^{\frac 34}
\ge \bigl(m_u'\bigl(e^{v-2\tilde s(v)}\bigr)\bigr)^{\frac 34}(2\tilde s(v))^{\frac 34}\medskip\\
\displaystyle \qquad\quad=\bigl(m_u'(e^{t})\bigr)^{\frac 34} (2\tilde s(v))^{\frac 34}\ge \bigl(m_u'(e^{t})\bigr)^{\frac 34}\bigl(m_f(e^t)\bigr)^{-\frac{3\kappa(t)\varepsilon(t)}{4t}}\ge \bigl(m_u'(e^{t})\bigr)^{\frac 34}\bigl(m_f(e^t)\bigr)^{-\frac{1}{4}} .
\end{array}
\]
From the previous inequality and equations \eqref{eq7.58}, \eqref{eq7.54}, \eqref{eq7.55} we obtain
\[
\begin{array}{l}
\displaystyle
0\le \varliminf_{t\rightarrow\infty}t^2\left(-\frac{1}{\ln \phi_h(t)}\right)'\le \varlimsup_{t\rightarrow\infty}t^2\left(-\frac{1}{\ln \phi_h(t)}\right)'= \varlimsup_{t\rightarrow\infty}t^2\left(-\frac{1}{m_u(e^t)}\right)'=\varlimsup_{t\rightarrow\infty}\frac{t^2\, m_u'(e^t)}{(m_u(e^t))^2}\medskip\\
\displaystyle \ \ \le \varlimsup_{t\rightarrow\infty}\frac{t^2\, (m_u(e^t))^{\frac 43}\, (\phi_f(t))^{\frac 13}}{(m_u(e^t))^2}\le \varlimsup_{t\rightarrow\infty}\frac{t^2\, (\psi(t))^{\frac{1}{3\kappa(t)}}}{(1-\delta(t))^{\frac 23}\bigl(e^{-1}\psi(t)\bigr)^{\frac{2}{3\kappa(t-2\tilde s(t))}})}\medskip\\
\displaystyle \ \ \le
\varlimsup_{t\rightarrow\infty}\frac{e^{\frac 23}\,t^2}{(\psi(t))^{\frac{2}{3\kappa(t-2\tilde s(t))}-\frac{1}{3\kappa(t)}}}\le \varlimsup_{t\rightarrow\infty}\frac{e^{\frac 23}\,t^2}{(\ln\psi(t))^2}=0.
\end{array}
\]
This shows that $h:=e^f$ satisfies condition \eqref{cII}, i.e. $h\in\mathscr C$.

Further, under the hypotheses of the proposition, we show that $\sin f$ and $\cos f$ are of class $\mathscr C$. In fact,
by the definition of the trigonometric functions, for all sufficiently large $r$,
\[
\frac 13 M_{e^{if}}(r)\le\frac{1}{2} M_{e^{if}}(r)-\frac 12 M_{e^{-if}}(r)\le \max\bigl(M_{\sin f}(r),M_{\cos f}(r)\bigr)\le M_{e^{if}}(r).
\]
Since, as we have proved, $e^{if}\in\mathscr C$, the above inequality and Proposition \ref{propo1.11}\,(1) imply that $\sin f$ and $\cos f$ are of class $\mathscr C$ as well.

To complete the proof of the proposition, it remains to show that if $f\in\mathscr C$ is a univariate entire function satisfying condition \eqref{eq1.11}, then its derivative and every antiderivative are of class $\mathscr C$ and satisfy \eqref{eq1.11}.

Note that according to the Cauchy estimates for the derivative of a holomorphic function, for $0<s<1$ and all $r>0$,
\begin{equation}\label{eq7.59}
M_{f'}(sr)\le \frac{1}{1-s}M_f(r).
\end{equation}
On the other hand, by the mean-value theorem
\begin{equation}\label{eq7.60}
M_f(r)-|f(0)|\le r M_{f'}(r).
\end{equation}

First, assume that $f\in\mathscr C$ satisfies conditions \eqref{cI} and \eqref{eq1.11}. Then for some $C>0$ and all sufficiently large $t$,
\begin{equation}\label{eq7.61}
\frac{\phi_f(t+1)-\phi_f(t)}{\phi_f(t)-\phi_f(t-1)}<C.
\end{equation}
Applying \eqref{eq7.59} with $s=e^{-\frac 12}$, \eqref{eq7.60} and then \eqref{eq7.61} and convexity of $\phi_f$ we obtain, for all sufficiently large $t$,\smallskip
\begin{equation}\label{eq7.62}
\begin{array}{l}
\displaystyle
\frac{\phi_{f'}(t+1)-\phi_{f'}(t)}{\phi_{f'}(t)-\phi_{f'}(t-1)}\le \frac{\phi_f(t+\frac 32)-\phi_f(t)+t+ c_1}{\phi_f(t)-\phi_f(t-\frac 12)-t+c_2} \le\frac{(C+1)(\phi_f(t+\frac 12)-\phi_f(t-\frac 12))+t+ c_1}{\frac{1}{2}\bigl(\phi_f(t-\frac 12)-\phi_f(t-\frac 32)\bigr)-t+c_2}\medskip\\
\displaystyle\qquad\qquad\qquad\qquad\ \
\le\frac{C(C+1)(\phi_f(t-\frac 12)-\phi_f(t-\frac 32))+t+ c_1}{
\frac{1}{2}\bigl(\phi_f(t-\frac 12)-\phi_f(t-\frac 32)\bigr)-t+c_2}
\end{array}
\end{equation}
for some absolute constants $c_1,c_2\in\mathbb R$.

Further, due to convexity of $\phi_f$, for all $t>0$,
\[
\phi_f(t-\mbox{$\frac 12$})-\phi_f(t-\mbox{$\frac 32$})\ge\frac{\phi_f(t-\frac 32)-\phi_f(0)}{t-\frac 32}.
\]
This and condition \eqref{eq1.11} imply\smallskip
\begin{equation}\label{eq7.63}
\begin{array}{l}
\displaystyle
0\le\varliminf_{t\rightarrow\infty}\frac{t}{\phi_f(t-\frac 12)-\phi_f(t-\frac 32)}\le
\varlimsup_{t\rightarrow\infty}\frac{t}{\phi_f(t-\frac 12)-\phi_f(t-\frac 32)}\le \lim_{t\rightarrow\infty}\frac{t (t-\frac 32)}{\phi_f(t-\frac 32)-\phi_f(0)}\medskip\\
\displaystyle \ \ =\lim_{t\rightarrow\infty}\frac{(t-\frac 32)^2}{\phi_f(t-\frac 32)}=0.
\end{array}
\end{equation}
Therefore from \eqref{eq7.62}, \eqref{eq7.63} we deduce
\[
\varlimsup_{t\rightarrow\infty}\frac{\phi_{f'}(t+1)-\phi_{f'}(t)}{\phi_{f'}(t)-\phi_{f'}(t-1)}
\le\varlimsup_{t\rightarrow\infty}\frac{C(C+1)(\phi_f(t-\frac 12)-\phi_f(t-\frac 32))+t+ c_1}{
\frac{1}{2}\bigl(\phi_f(t-\frac 12)-\phi_f(t-\frac 32)\bigr)-t+c_2}\le 2C(C+1).
\]
This is condition \eqref{cI} for $f'$, i.e. $f'\in\mathscr C$.

Let us show that $f'$ satisfies condition \eqref{eq1.11}. Indeed, using \eqref{eq7.60} and condition \eqref{eq1.11} for $f$ we get, for all sufficiently large $t$,
\[
\varliminf_{t\rightarrow\infty}\frac{\phi_{f'}(t)}{t^2}\ge \varliminf_{t\rightarrow\infty} \frac{\phi_f(t)-t+c}{t^2}=\lim_{t\rightarrow\infty}\frac{\phi_f(t)}{t^2}=\infty,
\]
as required.

Now, let us prove that if $f'\in\mathscr C$ and satisfies \eqref{cI}, \eqref{eq1.11}, then its antiderivative $f\in\mathscr C$ and satisfy these conditions as well.

As before, we apply inequalities \eqref{eq7.59}, \eqref{eq7.60} and instead of \eqref{eq7.61} we use the condition
\begin{equation}\label{eq7.64}
\frac{\phi_{f'}(t+1)-\phi_{f'}(t)}{\phi_{f'}(t)-\phi_{f'}(t-1)}<C.
\end{equation}
Similarly to \eqref{eq7.62} we derive, for all sufficiently large $t$, that
\begin{equation}\label{eq7.65}
\begin{array}{l}
\displaystyle
\frac{\phi_{f}(t+1)-\phi_{f}(t)}{\phi_{f}(t)-\phi_{f}(t-1)}\le \frac{\phi_{f'}(t+1)-\phi_{f'}(t-\frac 12)+t+c_3}{\phi_{f'}(t-\frac 12)-\phi_{f'}(t-1)-t+c_4}\medskip\\
\displaystyle \qquad\qquad\qquad\qquad\le \frac{C(C+1)(\phi_{f'}(t-1)-\phi_{f'}(t-2))+t+ c_3}{
\frac{1}{2}\bigl(\phi_{f'}(t-1)-\phi_{f'}(t-2)\bigr)-t+c_4}
\end{array}
\end{equation}
for some absolute constants $c_3,c_4\in\mathbb R$. 

From here, as above (cf. \eqref{eq7.63}), we obtain that
\[
\varlimsup_{t\rightarrow\infty}\frac{\phi_{f}(t+1)-\phi_{f}(t)}{\phi_{f}(t)-\phi_{f}(t-1)}
\le 2C(C+1),
\]
i.e. $f\in\mathscr C$.

Also, since $f'$ satisfies \eqref{eq1.11}, using \eqref{eq7.59} we get
\[
\varliminf_{t\rightarrow\infty}\frac{\phi_{f}(t)}{t^2}\ge \varliminf_{t\rightarrow\infty} \frac{\phi_{f'}(t-\frac 12)-1}{t^2}=\lim_{t\rightarrow\infty}\frac{\phi_{f'}(t)}{t^2}=\infty.
\]
Hence, $f$ satisfies \eqref{eq1.11} as well and so the required statements of the proposition are proved for functions of finite order.

Next, we consider the case of $f\in\mathscr C$ satisfying condition \eqref{cII}. We apply inequalities \eqref{eq7.59}, \eqref{eq7.60} for $s=e^{-2\tilde s(t)}$ with $\tilde s(t)$ as in \eqref{eq7.56}, \eqref{eq7.57}, i.e. $\tilde s(t):=\min(s(t),1)$, where $s: [t_0,\infty)\rightarrow\mathbb R$, for some $t_0>0$, is a continuous function such that
\begin{equation}\label{eq7.66}
\frac{\psi(t)}{\psi(t-2s(t))}=e.
\end{equation}
Then we have
\begin{equation}\label{eq7.67}
\frac{1}{\tilde s(t)}\le \bigl(\phi_f(t)\bigr)^{\frac{\kappa(t)\varepsilon(t)}{t}},\qquad t\ge t_0,
\end{equation}
where $\varepsilon\in C\bigl([t_0,\infty)\bigr)$ is a positive continuous function tending to zero at $\infty$ and $\kappa$ is determined by \eqref{eq7.55}.

As before, each $t\ge t_0$ can be written as
$t=v_t-2\tilde s(v_t)$ for some $v_t> t$. Hence, from \eqref{eq7.59}, \eqref{eq7.66}, \eqref{eq7.65} and \eqref{eq7.55} we obtain, for all sufficiently large $t$,
\begin{equation}\label{eq7.68}
\begin{array}{l}
\displaystyle
\phi_{f'}(t)=\phi_{f'}(v_t-2\tilde s(v_t))\le \phi_f (v_t)+\ln\left(\frac{1}{\tilde s(v_t)}\right) +c\le 2\phi_f(v_t)=2(\psi(v_t))^{\frac{1}{\kappa(v_t)}}\medskip\\
\displaystyle \qquad\ \  \le 2\bigl(e\psi(v_t-2\tilde s(v_t))\bigr)^{\frac{1}{\kappa(v_t)}}=2e^{\frac{1}{\kappa(v_t)}}\bigl(\phi_f(t)\bigr)^{\frac{\kappa(t)}{\kappa(v_t)}}\le 6(\phi_f(t))^{\tilde\kappa(t)};
\end{array}
\end{equation}
here $c$ is an absolute constant and $\kappa(t)$ tends to one as $t\rightarrow\infty$.

Further, from \eqref{eq7.60} we deduce that, for all sufficiently large $t$,
\begin{equation}\label{eq7.69}
\phi_f(t)\le t+1+ \phi_{f'}(t).
\end{equation}
Equations \eqref{eq7.68}, \eqref{eq7.69} imply that
\[
\lim_{t\rightarrow\infty}\frac{\ln\phi_f(t)}{\ln\phi_{f'}(t)}=1.
\]
Therefore due to condition (II), see \eqref{cII}, function $f\in\mathscr C$ if and only if $f'\in\mathscr C$.

This completes the proof of the proposition. 
\end{proof}

\section{Proofs of Theorem \ref{theo1.11}, Proposition \ref{prop1.15} and Corollary \ref{cor1.15}}
\begin{proof}[{\bf 8.1. Proof of Theorem \ref{theo1.11}}]
First, we show that the the radius of convergence $r_f$ of the Taylor expansion of $f$ at $0$ is $\infty$, i.e. that $f$ is an entire function. By definition,
\[
\ln r_f=\varliminf_{j\rightarrow\infty}\frac{-\ln |c_j|}{j}=\varliminf_{j\rightarrow\infty}\frac{1}{j}\int_0^j h^{-1}(s)\,ds\ge \varliminf_{j\rightarrow\infty}\frac{1}{j}\int_{\frac j2}^j h^{-1}(s)\,ds\ge \varliminf_{j\rightarrow\infty}\frac{h^{-1}(\frac j2)}{2}=\infty,
\]
as required.

Next, we prove that $f$ is of finite order. Observe that the second condition  for $h$, see \eqref{eq1.12}, implies that for some $c>0$ and all  sufficiently large $t>0$, $h(t)\le e^{ct}$.
Passing in this inequality to inverse functions we obtain for all 
  $t\ge t_0$, for some $t_0\in\mathbb R_+$,
\[
h^{-1}(t)\ge \frac{\ln t}{c}.
\]
From here, by the definition of the order of $f$, we get
\[
\rho_f:=\varlimsup_{j\rightarrow\infty}\frac{j\ln j}{-\ln |c_j|}=\varlimsup_{j\rightarrow\infty}\frac{j\ln j}{\int_0^j h^{-1}(s)\,ds}\le \varlimsup_{j\rightarrow\infty}\frac{j\ln j}{\int_{t_0}^j h^{-1}(s)\,ds}\le \varlimsup_{j\rightarrow\infty}\frac{c j\ln j}{\int_{t_0}^j \ln s\,ds}=c,
\]
as required.

Since $\rho_f\le c$, for all sufficiently large $t>0$ we have, see, e.g., \cite[Ch.\,I.2]{L},
 \begin{equation}\label{equ8.45}
 \nu_f(t)\le\phi_f(t)\le 2c t+\nu_f(t),
 \end{equation}
 where
 \[
 \nu_f(t):=\sup_{j\in\mathbb N}\,(\ln |c_j|+jt),\qquad t\in\mathbb R_+.
 \]
 Hence,
 \begin{equation}\label{eq8.42}
 \begin{array}{lr}
 \displaystyle
 \phi_f(t+1)-\phi_f(t)\le \nu_f(t+1)-\nu_f(t)+2c(t+1);\\ \\
\displaystyle
 \phi_f(t)-\phi_f(t-1)\ge \nu_f(t)-\nu_f(t-1)-2c(t-1).
 \end{array}
 \end{equation}
 Let us consider the function
 \[
 g(x,t)=-\int_0^x h^{-1}(s)\, ds+xt,\quad (x, t)\in \mathbb R_+\times\mathbb R_+.
 \]
 One easily checks that for a fixed $t\ge h^{-1}(0)$ the function $g(\cdot, t)$ attains it maximal value at $x=h(t)$.
Using the substitution $s\mapsto h(s)$ and then the integration by parts we obtain
 \[
 \begin{array}{l}
 \displaystyle
 g(h(t),t)=
 -\int_0^{h(t)} h^{-1}(s)\, ds+h(t)\,t=-\int_{h^{-1}(0)}^{h^{-1}(h(t))} s h'(s)\, ds+h(t)\,t\medskip\\
 \displaystyle \qquad \qquad =
 h(t) (t-h^{-1}(h(t)))+\int_{h^{-1}(0)}^{h^{-1}(h(t))}h(s)\, ds=\int_{h^{-1}(0)}^{t}h(s)\,ds.
 \end{array}
 \]
  In particular, for $t\ge h^{-1}(0)$,
 \[
g(\lfloor h(t)\rfloor,t) =
-\int_0^{\lfloor h(t)\rfloor} h^{-1}(s)\, ds+\lfloor h(t)\rfloor\,t\le \nu_f(t)\le g(h(t),t)=\int_{h^{-1}(0)}^{t}h(s)\,ds.
 \]
 Also, by definition, for such $t$,
 \[
 \begin{array}{l}
 \displaystyle
0\le g(h(t),t)-g(\lfloor h(t)\rfloor,t)=\{h(t)\}\,t-\int_{\lfloor h(t)\rfloor}^{h(t)} h^{-1}(s)\,ds=\int_{\lfloor h(t)\rfloor}^{h(t)} \bigl(h^{-1}(h(t))-h^{-1}(s)\bigr)\,ds\medskip\\
 \displaystyle \ \ \le \omega_{h(t)}(1;h^{-1}) \le t.\end{array}
 \]
This yields (for all $t\ge h^{-1}(0)$)
\begin{equation}\label{eq8.43}
\int_{h^{-1}(0)}^{t}h(s)\,ds- t\le \int_{h^{-1}(0)}^{t}h(s)\,ds-\omega_{h(t)}(1;h^{-1})\le
\nu_f(t)\le \int_{h^{-1}(0)}^{t}h(s)\,ds.
\end{equation}
Using these estimates in \eqref{eq8.42} we get, for all sufficiently large $t>0$,
\begin{equation}\label{equ8.48}
\phi_f(t+1)-\phi_f(t)\le \int_t^{t+1}h(s)\, ds+t+2c (t+1)\le h(t+1)+(2c+1)(t+1)
\end{equation}
and
\begin{equation}\label{equ8.49}
\phi_f(t)-\phi_f(t-1)\ge \int_{t-1}^t h(s)\, ds- t-2c(t-1)\ge h(t-1)- (2c+1)t.
\end{equation} 
Invoking properties of $h$, we derive from the last two inequalities that
\[
\varlimsup_{t\rightarrow\infty}\frac{\phi_f(t+1)-\phi_f(t)}{\phi_f(t)-\phi_f(t-1)}\le\varlimsup_{t\rightarrow\infty}\frac{h(t+1)}{h(t-1)}<\infty.
\]
Thus, $f\in\mathscr C$. 

Also, due to \eqref{eq1.12},
\[
\lim_{t\rightarrow\infty}\frac{\phi_f(t)}{t^2}\ge\lim_{t\rightarrow\infty}\frac{\nu_f(t)}{t^2}\ge \lim_{t\rightarrow\infty}\frac{\int_{h^{-1}(0)}^{t}h(s)\,ds- t}{t^2}\ge \lim_{t\rightarrow\infty}\frac{\int_{\frac t2}^{t}h(s)\,ds}{t^2}\ge  \lim_{t\rightarrow\infty}\frac{h\bigl(\frac t2\bigr)}{2t}=\infty.
\]
Hence, $f$ satisfies condition \eqref{eq1.11}.

Finally,
\[
\rho_f=\varlimsup_{t\rightarrow\infty}\frac{\ln\phi_f(t)}{t}=\varlimsup_{t\rightarrow\infty}\frac{\ln\nu_f(t)}{t}.
\]
Therefore from \eqref{eq8.43} we obtain
\[
\rho_f= \varlimsup_{t\rightarrow\infty}\frac{\ln h(t)}{t}.
\]
The proof of the theorem is complete.
\end{proof}
\begin{proof}[{\bf 8.2. Proof of Proposition \ref{prop1.15}}]
Clearly, it suffices to prove that under the hypotheses of the proposition, $f_{h_1}+cf_{h_2}\in\mathscr C$ for all $c\in\mathbb C\setminus 0$. 

Due to \cite[Ch.\,I.2,\,Eq.\,(1.10)]{L} (cf. \eqref{equ8.45} above) and \eqref{eq8.43}, for each
$C>\max(\rho_{f_{h_1}},\rho_{f_{h_2}})$ there exists $t_C>0$ such that for all $t\ge t_C$,
\begin{equation}\label{eq8.48}
\int_{h_i^{-1}(0)}^t h_i(s)\, ds-\omega_{h_i(t)}(1;h_i^{-1})
\le \phi_{f_{h_i}}(t)\le Ct+\int_{h_i^{-1}(0)}^t h_i(s)\, ds,\qquad i=1,2.
\end{equation}

Also, the assumption of the proposition implies that for some $q>\displaystyle\varlimsup_{t\rightarrow\infty}\mbox{$\frac{\omega_t(1;h_1^{-1})}{h_1^{-1}(t)}$}=:\omega_{h_1}$ and all sufficiently large $t>0$,
\begin{equation}\label{eq8.49}
h_2(t)< h_1(t)-q-\rho_{f_{h_1}}.
\end{equation}
In particular, we obtain that $\rho_{f_{h_2}}\le\rho_{f_{h_1}}$, see Theorem \ref{theo1.11}.

Inequality \eqref{eq8.49} shows that for each $\tilde q\in \left(\omega_{h_1},q\right)$  there exists $t_{\tilde q}>0$ such that for all $t\ge t_{\tilde q}$,
\[
\int_{h_2^{-1}(0)}^t h_2(s)\, ds< \int_{h_1^{-1}(0)}^t h_1(s)\, ds - (\tilde q+\rho_{f_{h_1}})t.
\]
From here and \eqref{eq8.48}  with $C:=\rho_{f_{h_1}}+\frac{\tilde q-\omega_{h_1}}{2}$ we obtain, for all sufficiently large $t>0$,
\[
\begin{array}{l}
\displaystyle
\phi_{f_{h_2}}(t)< \left(C-\tilde q-\rho_{f_{h_1}}+\frac{\omega_{h_1(t)}(1;h_1^{-1})}{t}\right)t +\phi_{f_{h_1}}(t)\medskip\\
\displaystyle \qquad \quad <
\left(-\frac{\tilde q+\omega_{h_1}}{2}+\omega_{h_1}+\frac{\tilde q-\omega_{h_1}}{4}\right)t +\phi_{f_{h_1}}(t)=\frac{\omega_{h_1}-\tilde q}{4}\, t +\phi_{f_{h_1}}\medskip\\
\displaystyle\qquad\quad<-\ln (1+|c|)+\phi_{f_{h_1}}(t).
\end{array}
\]
Thus for all sufficiently large $r>0$,
\[
M_{cf_{h_2}}(r)<\frac{|c|}{1+|c|}\, M_{f_{h_1}}(r).
\]
This implies that
\[
\frac{1}{1+|c|}\le\varliminf_{r\rightarrow\infty}\frac{M_{f_{h_1}+cf_{h_2}}(r)}{ M_{f_{h_1}}(r)}\le\varlimsup_{r\rightarrow\infty}\frac{M_{f_{h_1}+cf_{h_2}}(r)}{ M_{f_{h_1}}(r)}\le 2.
\]
Therefore $f_{h_1}+cf_{h_2}\in\mathscr C$ (cf. property (1) in section 1.4).

The proof of the proposition is complete.
\end{proof}
\begin{proof}[{\bf 8.3. Proof of Corollary \ref{cor1.15}}]
We use inequalities \eqref{equ8.48}, \eqref{equ8.49} for functions
$h_1,\dots, h_l$. Then we have, for a fixed $c>\max\{\rho_{f_{h_1}},\dots,\rho_{f_{h_l}}\}$ and all sufficiently large $t>0$,
\[
h_j(t-1)-(2c+1)t\le\phi_{f_{h_j}}(t)-\phi_{f_{h_j}}(t-1)\le h_j(t+1)+(2c+1)(t+1).
\]
Together with conditions \eqref{eq1.12} for functions $h_j$, $1\le j\le l$, this implies 
\[
\begin{array}{l}
\displaystyle
\varlimsup_{r\rightarrow\infty}\frac{m_{f_j}(r)-m_{f_j}\bigl(\frac re\bigr)}{\sqrt{m_{f_{j+1}}(r)-m_{f_{j+1}}\bigl(\frac{r}{e}\bigr)}}\ge \varlimsup_{t\rightarrow\infty}\frac{h_j(t-1)-(2c+1)t}{\sqrt{h_{j+1}(t+1)+(2c+1)(t+1)}}\ge
A^{-\frac 32}\varlimsup_{t\rightarrow\infty}\frac{h_j(t)}{\sqrt{h_{j+1}(t)}};\medskip\\
\displaystyle
\varlimsup_{r\rightarrow\infty}\frac{m_{f_j}(r)-m_{f_j}\bigl(\frac re\bigr)}{\sqrt{m_{f_{j+1}}(r)-m_{f_{j+1}}\bigl(\frac{r}{e}\bigr)}}\le \varlimsup_{t\rightarrow\infty}\frac{h_j(t+1)+(2c+1)(t+1)}{\sqrt{h_{j+1}(t-1)-(2c+1)t}}\le
A^{\frac 32}\varlimsup_{t\rightarrow\infty}\frac{h_j(t)}{\sqrt{h_{j+1}(t)}},
\end{array}
\]
where 
\[
A:=\max_{1\le j\le l}\left\{\varlimsup_{t\rightarrow\infty}\frac{h_j(t+1)}{h_j(t)}\right\}.
\]
Hence, condition \eqref{eq1.4} of Theorem \ref{theo1.8} acquires the form
\[
0=\lim_{r\rightarrow\infty}\frac{m_{f_j}(r)-m_{f_j}\bigl(\frac re\bigr)}{\sqrt{m_{f_{j+1}}(r)-m_{f_{j+1}}\bigl(\frac{r}{e}\bigr)}}=\lim_{t\rightarrow\infty}\frac{h_j(t)}{\sqrt{h_{j+1}(t)}}\quad {\rm for\ all}\quad 1\le j\le l-1.
\]

Further, let $u_j:={\rm Re} f_{h_j}$, $l+1\le j\le m$.
Then $\ln m_{e^{f_{h_j}}}(r)=m_{u_j}(r)$ for all such $ j$. Using the Borel-Carath\'eodory theorem (cf. \eqref{eq7.35}) we obtain, for $0<s<1$ and all $r>0$,
\begin{equation}\label{eq8.77}
m_{h_j}(sr)\le m_{u_j}(r)-\ln(1-s)+c_j
\end{equation}
for some constant $c_j:=c(h_j)$.

On the other hand,
\begin{equation}\label{eq8.78}
m_{u_j}(r)\le m_{h_j}(r).
\end{equation}
Applying \eqref{eq8.77} with $s=1-e^{-t}$, \eqref{eq8.78} together with \eqref{equ8.45}, \eqref{eq8.43} for functions $h_j$, $l+1\le j\le m$, we obtain, for all sufficiently large $r:=e^t$,\smallskip
\begin{equation}\label{eq8.79}
\begin{array}{l}
\displaystyle \frac{m_{u_j}(r)}{m_{u_{j+1}}(\frac re)}\le \frac{\phi_{h_j}(t)}{\phi_{h_{j+1}}(t-1+\ln(1-e^{-t}))-t+c_1}\le \frac{c_2t+\int_{h_j^{-1}(0)}^t h_j(s)\,ds}{\int_{h_{j+1}^{-1}(0)}^{t-1-2e^{-t}} h_{j+1}(s)\,ds -2t+c_3}\medskip\\
\displaystyle  \frac{m_{u_j}(r)}{m_{u_{j+1}}(\frac re)}\ge\frac{\phi_{h_j}(t+\ln(1-e^{-t}))-t+c_4}{\phi_{h_{j+1}}(t-1)}\ge\frac{\int_{h_{j}^{-1}(0)}^{t-2e^{-t}} h_{j}(s)\,ds-2t+c_5}{c_6t+\int_{h_{j+1}^{-1}(0)}^{t-1} h_{j+1}(s)\,ds}.
\end{array}
\end{equation}
for some constants $c_1,\dots, c_6$.

To proceed we require
\begin{Lm}\label{lem8.1}
We have
\[
 \lim_{t\rightarrow\infty}\frac{\int_{h_{j}^{-1}(0)}^{t-2e^{-t}} h_{j}(s)\,ds}{\int_{h_{j}^{-1}(0)}^{t} h_{j}(s)\,ds}=1\quad {\rm and}\quad \lim_{t\rightarrow\infty}\frac{\int_{h_{j+1}^{-1}(0)}^{t-1-2e^{-t}} h_{j+1}(s)\,ds}{\int_{h_{j+1}^{-1}(0)}^{t-1} h_{j+1}(s)\,ds}=1.
\]
\end{Lm}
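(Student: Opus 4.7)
The plan is to prove both limits simultaneously since they are structurally identical: in each case the difference between numerator and denominator is the integral of $h_\bullet$ over an interval of length $2e^{-t}$, so I need to show this small piece is negligible compared to the full integral. Write
\[
1-\frac{\int_{h_{j}^{-1}(0)}^{t-2e^{-t}} h_{j}(s)\,ds}{\int_{h_{j}^{-1}(0)}^{t} h_{j}(s)\,ds}=\frac{\int_{t-2e^{-t}}^{t} h_{j}(s)\,ds}{\int_{h_{j}^{-1}(0)}^{t} h_{j}(s)\,ds},
\]
and analogously for the second quotient with upper endpoints $t-1$ and $t-1-2e^{-t}$.

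Next I would upper bound each numerator using monotonicity of $h_j$: since $h_j$ is increasing,
\[
\int_{t-2e^{-t}}^{t} h_{j}(s)\,ds\le 2e^{-t} h_{j}(t),
\]
and similarly $\int_{t-1-2e^{-t}}^{t-1}h_{j+1}(s)\,ds\le 2e^{-t}h_{j+1}(t-1)$. For the denominators, for $t$ large enough that $t-1>h_j^{-1}(0)$ (resp.\ $t-2>h_{j+1}^{-1}(0)$), I would lower bound by integrating over a unit subinterval near the top endpoint, giving
\[
\int_{h_{j}^{-1}(0)}^{t} h_{j}(s)\,ds\ge\int_{t-1}^{t}h_j(s)\,ds\ge h_{j}(t-1),
\]
and likewise $\int_{h_{j+1}^{-1}(0)}^{t-1}h_{j+1}(s)\,ds\ge h_{j+1}(t-2)$.

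Combining these estimates reduces both claims to showing that $e^{-t}\cdot\bigl(h_{j}(t)/h_{j}(t-1)\bigr)\to 0$ and $e^{-t}\cdot\bigl(h_{j+1}(t-1)/h_{j+1}(t-2)\bigr)\to 0$. The second condition in \eqref{eq1.12} gives constants $A_j,A_{j+1}<\infty$ with $h_{j}(s+1)/h_{j}(s)\le A_j$ and $h_{j+1}(s+1)/h_{j+1}(s)\le A_{j+1}$ for all sufficiently large $s$, so each of the above ratios is eventually bounded by a constant and the factor $e^{-t}$ drives the quotient to zero.

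I expect no real obstacle here: the argument is a straightforward monotonicity-and-growth comparison. The only subtle point is to justify the lower bound on the denominator, which just requires choosing $t$ large enough so that the subinterval $[t-1,t]$ (resp.\ $[t-2,t-1]$) lies inside $[h_\bullet^{-1}(0),\infty)$, and this is automatic since $h_\bullet^{-1}(0)$ is a fixed constant.
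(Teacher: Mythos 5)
Your proof is correct and follows essentially the same route as the paper: bound the numerator gap $\int_{t-2e^{-t}}^{t}h_j\,ds$ above by $2e^{-t}h_j(t)$ via monotonicity, bound the denominator below by $h_j(t-1)$, and invoke the growth bound in \eqref{eq1.12} to control $h_j(t)/h_j(t-1)$. The paper's version merely packages the last step as $c\,h_j(t)\le h_j(t-1)$ for some $c\in(0,1)$, which is the same estimate in a different order.
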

\begin{proof}
By the definition of $h_j$, see \eqref{eq1.12}, for all sufficiently large $t$,
\[
ce^{-t}h_j(t)\le 2e^{-t}h_j(t-1)\le \int_{t-2e^{-t}}^t h_{j}(s)\,ds\le 2e^{-t}h_j(t)
\]
for some constant $c\in (0,1)$.

On the other hand,
\[
ch_j(t)\le h_j(t-1)\le\int_{h_{j}^{-1}(0)}^{t} h_{j}(s)\,ds\le t h_j(t).
\]
Comparing these inequalities, we obtain the first statement of the lemma. The second statement can be proved analogously.
\end{proof}

Using this lemma together with \eqref{eq8.79} and \eqref{eq1.12} we get, for $l+1\le j\le m-1$,
\[
\varlimsup_{r\rightarrow\infty}\frac{\ln m_{e^{h_j}}(r)}{\ln m_{e^{h_{j+1}}}(r)}=\varlimsup_{r\rightarrow\infty}\frac{m_{u_j}(r)}{m_{u_{j+1}}(\frac re)}=\varlimsup_{t\rightarrow\infty}\frac{\int_{M}^{t} h_{j}(s)\,ds}{\int_{M}^{t-1} h_{j+1}(s)\,ds},
\]
where $M:=\max_{l+1\le j\le m}\{h_j^{-1}(0)\}$.

This expression and Theorem \ref{theo1.8} give the required statement.

The proof of the corollary is complete.
\end{proof}

\end{document}